\documentclass[11pt, oneside, article]{memoir}

\usepackage[readonly,extract=no,prefix=memoize/]{memoize}
\usepackage{newpxtext}
\usepackage{amssymb}
\usepackage[varg,bigdelims]{newpxmath}
\let\openbox\relax
\usepackage{amsthm}
\usepackage{amsmath}
\usepackage{mathtools}
\usepackage{aliascnt}
\usepackage{dutchcal}
\usepackage[multiple]{footmisc}
\usepackage{tikz}
\usepackage{xcolor}
\usepackage{enumitem}
\definecolor{darkgreen}{rgb}{0,0.35,0}
\usepackage[colorlinks,citecolor=darkgreen,linkcolor=darkgreen,urlcolor=black]{hyperref}
\usepackage[nameinlink,capitalize]{cleveref}

\crefname{chapter}{Section}{Sections}

\theoremstyle{plain}
\newtheorem{proposition}{Proposition}[chapter]

\theoremstyle{remark}
\newaliascnt{remark}{proposition}
\newtheorem{remark}[remark]{Remark}
\aliascntresetthe{remark}
\newtheorem{convention}[remark]{Convention}
\newtheorem{example}[remark]{Example}

\theoremstyle{definition}
\newaliascnt{definition}{proposition}
\newtheorem{definition}[definition]{Definition}
\aliascntresetthe{definition}

\crefname{proposition}{Proposition}{Propositions}
\crefname{remark}{Remark}{Remarks}
\crefname{definition}{Definition}{Definitions}

\DeclareSymbolFont{stmry}{U}{stmry}{m}{n}
\DeclareMathSymbol\fatsemi\mathop{stmry}{"23}
\newcommand{\then}{\mathbin{\fatsemi}}

\newcommand{\lift}[1]{\emlift{#1}}
\newcommand{\factor}[1]{{#1'}}
\newcommand{\emlift}[1]{\overline{#1}}
\newcommand{\kllift}[1]{\overline{#1}}
\newcommand{\blift}[1]{\overline{#1}}
\newcommand{\cdlift}[1]{\overline{#1}}

\newcommand{\id}{1}
\DeclareMathOperator{\ophelper}{op}
\DeclareMathOperator{\cohelper}{co}
\DeclareMathOperator{\trhelper}{tr}
\newcommand{\op}{^{\ophelper}}
\newcommand{\co}{^{\cohelper}}
\newcommand{\tran}{^{\trhelper}}

\DeclareMathAlphabet{\mathbbold}{U}{bbold}{m}{n}
\newcommand{\arrow}{\mathbf{2}}

\newcommand{\dash}{{\mathord{\text{--}}}}

\newcommand{\ccdots}{\hspace{.04cm}\cdots} 

\newcommand{\namedcat}[1]{\mathbf{#1}}
\newcommand{\cat}[1]{{#1}}
\newcommand{\tcat}[1]{\mathbf{#1}}
\newcommand{\dcat}[1]{\mathbb{#1}}
\newcommand{\ocell}[1]{{#1}}
\newcommand{\mnd}[1]{\mathbf{#1}}
\newcommand{\bim}[1]{{#1}}
\newcommand{\fun}[1]{{#1}}
\newcommand{\tfun}[1]{\mathbf{#1}}

\newcommand{\DD}{\dcat{D}}
\newcommand{\tX}{\tcat{X}}
\newcommand{\tY}{\tcat{Y}}
\newcommand{\tC}{\tcat{C}}
\newcommand{\tDelta}{\tcat{\Delta}}
\newcommand{\mT}{\mnd{t}}
\newcommand{\bM}{\bim{m}}
\newcommand{\fF}{\fun{f}}
\newcommand{\fG}{\fun{g}}
\newcommand{\fK}{\fun{k}}
\newcommand{\fX}{\fun{x}}
\newcommand{\fY}{\fun{y}}
\newcommand{\fR}{\fun{r}}
\newcommand{\fL}{\fun{l}}
\newcommand{\oS}{\ocell{S}}
\newcommand{\oX}{\ocell{X}}
\newcommand{\oY}{\ocell{Y}}
\newcommand{\cS}{\cat{S}}

\newcommand{\lpow}[1]{{{}^{#1}\mspace{-1.5mu}}}
\DeclarePairedDelimiter{\gen}{\langle}{\rangle}
\newcommand{\wcd}[1]{\gen{#1}}
\newcommand{\cd}[1]{\scalebox{0.8}{$\langle$}\mspace{-1.5mu}#1\mspace{-1.5mu}\scalebox{0.8}{$\rangle$}}
\newcommand{\pf}[2]{{{#1}{\#}{#2}}} 
\newcommand{\mX}{\wcd{\fX}}
\newcommand{\wmX}{\wcd{\fX}}
\newcommand{\mTX}{\pf{\mT}{\fX}}

\newcommand{\Set}{\namedcat{Set}}
\newcommand{\SET}{\namedcat{SET}}
\newcommand{\Cat}{\namedcat{Cat}}
\newcommand{\CAT}{\namedcat{CAT}}
\newcommand{\Span}{\namedcat{Span}}
\newcommand{\Mat}{\namedcat{Mat}}

\newcommand{\Loose}{\tcat{Loose}}
\newcommand{\Tight}{\tcat{Tight}}
\newcommand{\Mnd}{\tcat{Mnd}}
\newcommand{\EM}{\namedcat{EM}}
\newcommand{\Kl}{\namedcat{Kl}}
\newcommand{\Mod}{\namedcat{Mod}}

\newcommand{\MMnd}{\dcat{M}\tcat{nd}}
\newcommand{\MMndret}{\dcat{M}\tcat{nd}_\mathrm{ret}}
\newcommand{\EEM}{\dcat{E}\namedcat{M}}
\newcommand{\EEMret}{\EEM_{\mathrm{ret}}}
\newcommand{\SSpan}{\dcat{S}\tcat{pan}}
\newcommand{\MMat}{\dcat{M}\tcat{at}}
\newcommand{\MMod}{\dcat{M}\tcat{od}}
\newcommand{\CCat}{\dcat{C}\tcat{at}}
\newcommand{\SSq}{\dcat{S}\tcat{q}}

\newcommand{\lax}{\mathrm{lax}}
\newcommand{\HHom}{\dcat{H}\namedcat{om}_{\mathrm{co/lax}}}
\newcommand{\grayl}{\otimes_\mathrm{lax}}

\newcommand{\Alg}[2]{#2^{#1}}
\newcommand{\AlgTS}{\Alg{\mT}{\oS}}

\newcommand{\mcar}[1]{{\fR^{#1}}}
\newcommand{\mlcar}[1]{{\fL^{#1}}}
\newcommand{\mcarmod}[1]{{\rho^{#1}}}

\newcommand{\mcarT}{\mcar{\mT}}
\newcommand{\mlcarT}{\mlcar{\mT}}
\newcommand{\mcarmodT}{\mcarmod{\mT}}

\newcommand{\OpAlg}[2]{\lpow{#1}{#2}}
\newcommand{\OpAlgTS}{\OpAlg{\mT}{\oS}}

\newcommand{\mkcar}[1]{\lpow{#1}\fL}
\newcommand{\mrkcar}[1]{\lpow{#1}\fR}
\newcommand{\mkcarmod}[1]{\lpow{#1}\lambda}

\newcommand{\mkcarT}{\mkcar{\mT}}
\newcommand{\mrkcarT}{\mrkcar{\mT}}
\newcommand{\mkcarmodT}{\mkcarmod{\mT}}

\newcommand{\cdmod}[1]{\epsilon}
\newcommand{\pfmod}[1]{\epsilon}
\newcommand{\mXmod}{\cdmod{\mX}}
\newcommand{\mTXmod}{\pfmod{\mTX}}

\newcommand{\NEarrow}{\rotatebox[origin=c]{45}{\(\Rightarrow\)}}
\newcommand{\NWarrow}{\rotatebox[origin=c]{135}{\(\Rightarrow\)}}
\newcommand{\SWarrow}{\rotatebox[origin=c]{-135}{\(\Rightarrow\)}}
\newcommand{\SEarrow}{\rotatebox[origin=c]{-45}{\(\Rightarrow\)}}

\newcommand{\dN}{\mathrm{N}}
\newcommand{\dE}{\mathrm{E}}
\newcommand{\dS}{\mathrm{S}}
\newcommand{\dW}{\mathrm{W}}
\newcommand{\dNE}{\mathrm{NE}}
\newcommand{\dSE}{\mathrm{SE}}
\newcommand{\dNW}{\mathrm{NW}}
\newcommand{\dSW}{\mathrm{SW}}
\newcommand{\dNNE}{\mathrm{NNE}}
\newcommand{\dSSE}{\mathrm{SSE}}
\newcommand{\dNNW}{\mathrm{NNW}}
\newcommand{\dSSW}{\mathrm{SSW}}
\newcommand{\dNEE}{\mathrm{NEE}}
\newcommand{\dSEE}{\mathrm{SEE}}
\newcommand{\dNWW}{\mathrm{NWW}}
\newcommand{\dSWW}{\mathrm{SWW}}

\newcommand{\dL}{\mathrm{L}}
\newcommand{\dR}{\mathrm{R}}

\definecolor{purp}{RGB}{175,0,175}
\definecolor{dpurp}{RGB}{75,0,75}
\definecolor{dgray}{RGB}{40,40,40}
\definecolor{dgreen}{RGB}{0,150,0}
\definecolor{ddgreen}{RGB}{0,130,0}
\definecolor{dorange}{RGB}{150,50,0}
\definecolor{dyellow}{RGB}{220,180,0}
\definecolor{ddyellow}{RGB}{198,162,0}
\definecolor{dddyellow}{RGB}{176,144,0}
\definecolor{dred}{RGB}{150,0,0}
\definecolor{dblue}{RGB}{0,0,150}
\definecolor{pink}{RGB}{255,150,150}
\definecolor{beige}{RGB}{125,125,65}
\definecolor{choc}{RGB}{115,76,38}
\definecolor{bg}{RGB}{230,230,230}
\definecolor{bgd}{RGB}{178,178,178}
\definecolor{bgm}{RGB}{200,200,200}

\usetikzlibrary{patterns.meta,decorations.pathmorphing}

\tikzset{every picture/.style={baseline={([yshift=-3.5pt]current bounding box.center)},scale=.125}}

\pgfdeclaredecoration{teeth}{initial}{
  \state{initial}[width=\pgfdecorationsegmentlength]{
    \pgfpathmoveto{\pgfqpoint{-.75\pgfdecorationsegmentlength}{0pt}}
    \pgfpathlineto{\pgfqpoint{0pt}{\pgfdecorationsegmentamplitude}}
    \pgfpathlineto{\pgfqpoint{.75\pgfdecorationsegmentlength}{0pt}}
  }
}

\tikzset{boundc/.style={rounded corners}}
\tikzset{bound/.style={rounded corners,draw=black!50}}
\tikzset{o/.style={line width=2}}
\tikzset{t/.style={fill}}
\tikzset{tcirc/.style={t,circle,fill=#1,inner sep=2.5}}
\tikzset{tc/.style={t,circle,inner sep=1.75}}
\tikzset{olax/.style={draw=none,postaction={decorate,decoration={curveto,raise=1pt},draw,line width=0.7pt},postaction={decorate,decoration={teeth,raise=.85pt,segment length=3.5pt,amplitude=-4pt},fill}}}
\tikzset{olaxr/.style={draw=none,postaction={decorate,decoration={curveto,raise=-1pt},draw,line width=0.7pt},postaction={decorate,decoration={teeth,raise=-.85pt,segment length=3.5pt,amplitude=4pt},fill}}}
\tikzset{oclax/.style={olaxr}}
\tikzset{oclaxr/.style={olax}}
\tikzset{olaxs/.style={draw,line width=1pt,line cap=round,postaction={decorate,decoration={teeth, segment length=2.5pt,amplitude=-4pt},fill}}}
\tikzset{olaxo/.style={draw=none,postaction={decorate,decoration={curveto,raise=1.3pt},draw,line width=0.7pt},postaction={decorate,decoration={teeth,raise=1.15pt,segment length=3.5pt,amplitude=-4pt},fill}}}
\tikzset{oclaxo/.style={draw=none,postaction={decorate,decoration={curveto,raise=-1.3pt},draw,line width=0.7pt},postaction={decorate,decoration={teeth,raise=-1.15pt,segment length=3.5pt,amplitude=4pt},fill}}}

\tikzset{tmodi/.style={t,fill=white,draw=black,rectangle,minimum width=10,minimum height=5,inner sep=0,rounded corners=1}}
\tikzset{tspec/.style={tmodi}}
\tikzset{tspech/.style={t,rectangle,minimum width=13.5,minimum height=8.5,inner sep=0,rounded corners=2}}
\tikzset{tspechb/.style={t,fill=black,rectangle,minimum width=15.5,minimum height=10.5,inner sep=0,rounded corners=2.75}}
\tikzset{tspechs/.style={t,rectangle,minimum width=12,minimum height=7,inner sep=0,rounded corners=1.75}}
\tikzset{tspechsb/.style={t,fill=black,rectangle,minimum width=14,minimum height=9,inner sep=0,rounded corners=2.25}}
\tikzset{tdmodi/.style={t,fill=white,draw=black,rectangle,minimum width=10,minimum height=10,inner sep=0,rounded corners=1,rotate=45}}
\tikzset{tdspec/.style={tdmodi}}
\tikzset{tdspech/.style={t,rectangle,minimum width=13.5,minimum height=13.5,inner sep=0,rounded corners=2,rotate=45}}
\tikzset{tdmodis/.style={t,fill=white,draw=black,rectangle,minimum width=10,minimum height=10,inner sep=0,rounded corners=1}}
\tikzset{tdspecs/.style={tdmodis}}
\tikzset{tdspecsh/.style={t,rectangle,minimum width=13.5,minimum height=13.5,inner sep=0,rounded corners=2}}
\tikzset{em/.style={pattern={Lines[angle=45, distance=3pt, line width=0.4pt]},pattern color=#1}}
\tikzset{emb/.style={pattern={Lines[angle=45, distance=3pt, line width=1.6pt]},pattern color=#1}}
\tikzset{kl/.style={pattern={Lines[angle=-45, distance=3pt, line width=0.4pt]},pattern color=#1}}
\tikzset{klpl/.style={pattern={Lines[xshift=2.1pt, angle=-45, distance=3pt, line width=0.4pt]},pattern color=#1}}
\tikzset{klplb/.style={pattern={Lines[xshift=2.1pt, angle=-45, distance=3pt, line width=1.6pt]},pattern color=#1}}
\tikzset{emon/.style={pattern={Lines[angle=45, distance=6pt, line width=0.4pt]},pattern color=#1}}
\tikzset{klon/.style={pattern={Lines[angle=-45, distance=6pt, line width=0.4pt]},pattern color=#1}}
\tikzset{emoff/.style={pattern={Lines[xshift=4.2pt, angle=45, distance=6pt, line width=0.4pt]},pattern color=#1}}
\tikzset{kloff/.style={pattern={Lines[xshift=4.2pt, angle=-45, distance=6pt, line width=0.4pt]},pattern color=#1}}
\tikzset{omon/.style={o}}
\tikzset{oem/.style={line width=1.5}}
\tikzset{okl/.style={line width=1.5}}
\tikzset{omod/.style={o,preaction={draw=#1,line width=2.5,decorate,decoration={curveto,raise=1pt},shorten >=.5,shorten <=.5}}}
\tikzset{olmod/.style={o,preaction={draw=#1,line width=2.5,decorate,decoration={curveto,raise=-1pt},shorten >=.5,shorten <=.5}}}
\tikzset{omods/.style={o,transform canvas={xshift=-.75},preaction={draw=#1,transform canvas={xshift=-.75},line width=2.5,decorate,decoration={curveto,raise=1pt},shorten >=.5,shorten <=.5}}}
\tikzset{olmods/.style={o,transform canvas={xshift=.75},preaction={draw=#1,transform canvas={xshift=.75},line width=2.5,decorate,decoration={curveto,raise=-1pt},shorten >=.5,shorten <=.5}}}
\tikzset{obmod/.style={line width=1.5}}
\tikzset{obmodh/.style={line width=4}}
\tikzset{ocdb/.style={line width=3}}
\tikzset{ocd/.style={draw=bgd,line width=1}}
\tikzset{tcdb/.style={t,circle,fill=black,inner sep=1.05}}
\tikzset{tcd/.style={t,circle,fill=bgd,inner sep=.35}}
\tikzset{opfb/.style={line width=4}}
\tikzset{tpfb/.style={t,circle,fill=black,inner sep=1.45}}

\tikzset{tmon/.style={t,circle,inner sep=.75}}
\tikzset{thom/.style={inner sep=4.95,path picture={\draw[olaxs,#1] ([xshift=0,yshift=1.25]path picture bounding box.west) -- ([xshift=4,yshift=1.25]path picture bounding box.east);}}}
\tikzset{tmhomi/.style={tmodi}}
\tikzset{tmhom/.style={tmodi}}
\tikzset{tmhomh/.style={t,rectangle,minimum width=12.5,minimum height=7.5,inner sep=0,rounded corners=1.75}}
\tikzset{tmhomw/.style={t,fill=white,draw=black,rectangle,minimum width=12.5,minimum height=5,inner sep=0,rounded corners=1}}
\tikzset{tmhomwh/.style={t,rectangle,minimum width=15,minimum height=7.5,inner sep=0,rounded corners=1.75}}
\tikzset{tmhomt/.style={t,fill=white,draw=black,rectangle,minimum width=5,minimum height=5,inner sep=0,rounded corners=1}}
\tikzset{tmhomth/.style={t,rectangle,minimum width=7.5,minimum height=7.5,inner sep=0,rounded corners=1.75}}
\tikzset{tmhomsr/.style={t,fill=white,draw=black,rectangle,minimum width=12.5,minimum height=12.5,inner sep=0,rounded corners=1,rotate=45}}
\tikzset{tmhomsrh/.style={t,rectangle,minimum width=15,minimum height=15,inner sep=0,rounded corners=2,rotate=45}}
\tikzset{tmhoms/.style={t,fill=white,draw=black,rectangle,minimum width=12.5,minimum height=12.5,inner sep=0,rounded corners=1}}
\tikzset{tmhomsh/.style={t,rectangle,minimum width=15,minimum height=15,inner sep=0,rounded corners=1.75}}
\tikzset{tmpoint/.style={transform canvas={shift={(-.08pt,.11pt)}},t,circle,inner sep=1.175}}
\tikzset{omlip/.style={transform canvas={shift={(.06,.04)}},omon}}
\tikzset{arken/.style={fill=black,opacity=0}}
\tikzset{darken/.style={fill=black,opacity=.1}}
\tikzset{ddarken/.style={fill=black,opacity=.2}}
\tikzset{dddarken/.style={fill=black,opacity=.3}}
\tikzset{eback/.style={dddarken}}
\tikzset{nback/.style={darken}}
\tikzset{wback/.style={arken}}
\tikzset{sback/.style={ddarken}}
\tikzset{ewire/.style={blue}}
\tikzset{nwire/.style={ddgreen}}
\tikzset{wwire/.style={dddyellow}}
\tikzset{swire/.style={red}}
\tikzset{label/.style={font=\scriptsize}}

\tikzset{ob/.style={font=\small}}
\tikzset{a/.style={->,shorten <=-1,shorten >=-1}}
\tikzset{ab/.style={<-,shorten <=-1,shorten >=-1}}
\tikzset{lon/.style={shorten <=-1.5,shorten >=-1.5}}
\tikzset{long/.style={shorten <=-3,shorten >=-3}}
\tikzset{arr/.style={font=\tiny}}
\tikzset{cell/.style={font=\footnotesize}}
\tikzset{eq/.style={line width=2.4pt,postaction={draw=white, line width=1.6pt, shorten <=-0.05pt, shorten >=-0.05pt}}}
\tikzset{ar/.style={above right=-2}}
\tikzset{al/.style={above left=-2}}
\tikzset{br/.style={below right=-2}}
\tikzset{bl/.style={below left=-2}}
\tikzset{arcl/.style={above right=-3}}
\tikzset{alcl/.style={above left=-3}}
\tikzset{brcl/.style={below right=-3}}
\tikzset{blcl/.style={below left=-3}}
\tikzset{acl/.style={above=-2}}
\tikzset{bcl/.style={below=-2}}
\tikzset{arclcl/.style={above right=-4}}
\tikzset{alclcl/.style={above left=-4}}
\tikzset{brclcl/.style={below right=-4}}
\tikzset{blclcl/.style={below left=-4}}
\tikzset{aclcl/.style={above=-3}}
\tikzset{bclcl/.style={below=-3}}

\newcommand{\preq}{\hspace{35pt}}
\newcommand{\qqq}{\hspace{237.5pt}}
\newcommand{\eqquad}{\hspace{3pt}}
\newcommand{\equad}{\hspace{-2pt}}
\newcommand{\eqq}{\eqquad=\eqquad}
\newcommand{\eqspace}[1]{\hspace{#1}=\hspace{#1}}
\newcommand{\eq}{\equad=\equad}
\newcommand{\afterimage}{\vspace{3pt}}
\newcommand{\mypath}{}
\newcommand{\mypatha}{}
\newcommand{\mypathb}{}
\newcommand{\mypathc}{}
\newcommand{\mypathd}{}
\newcommand{\mypathe}{}
\newcommand{\mypathf}{}
\newcommand{\mypathg}{}

\tikzset{every picture/.style={baseline={([yshift=-3.5pt]current bounding box.center)},scale=.125}}

\setlist[itemize,enumerate]{topsep=3pt,itemsep=0pt,parsep=0pt,partopsep=0pt}

\settrims{0pt}{0pt}
\settypeblocksize{*}{34pc}{*}
\setlrmargins{*}{*}{1}
\setulmarginsandblock{.98in}{.98in}{*}
\setheadfoot{\onelineskip}{2\onelineskip}
\setheaderspaces{*}{1.5\onelineskip}{*}
\checkandfixthelayout

\setsecheadstyle{\normalfont\large\bfseries}
\setsubsecheadstyle{\normalfont\normalsize\bfseries}
\setsubsubsecheadstyle{\normalfont\normalsize\itshape}

\begin{document}

\title{MONADS IN 2-CATEGORIES}
\author{Aaron David Fairbanks}
\date{}

\maketitle

\vspace{-3.5em}
\begin{abstract}
  This is a condensed overview of the formal theory of monads in a
  2-category. Commutative diagrams and string diagrams are given side
  by side. In the string diagrams, each concept is visualized in a way
  that suggests its properties using topological intuition. For
  example, monads themselves appear as channels of fluid, suggesting
  the unit and associativity laws by topological deformation. We cover
  monads, modules, (co)lax 1-cells and the 2-cells between them,
  algebra objects, bimodules, distributive laws, codensity monads, and
  pushforward monads. In addition to the standard 2-categories of
  monads in a 2-category, we also define two double categories of
  monads in a 2-category. For example, applied to spans, this yields
  the two double categories of categories, functors, and retrofunctors
  upon restricting to the 1-cells carried by functions.
\end{abstract}

\tableofcontents*

\pagebreak

\phantomsection
\addcontentsline{toc}{chapter}{Introduction}
\chapter*{Introduction}

This is a condensed overview of the formal theory of monads in a
2-category. The paper is limited to a few definitions and results per
section, originally written as an appendix to
supplement~\cite{fairbanks-carlson-spivak}. We cover monads, modules,
lax and colax 1-cells of monads, algebra objects~\cite{street:monads}
(a.k.a.\ Eilenberg-Moore objects), bimodules~\cite{wood:ii},
distributive laws~\cite{beck:distributive}, codensity
monads~\cite{dubuc}, and pushforward monads~\cite{mateo}.

In \cref{sec:doublemonads} we also draw attention to two double
categories $\MMnd(\tX)$ and $\EEM(\tX)$ of monads in a 2-category
$\tX$, extending the two 2-categories $\Mnd(\tX)$ and $\EM(\tX)$
from~\cite{lack-street}. Both have the same objects (monads) and
arrows (lax and colax 1-cells), but the 2-cells are different. The
first double category --- whose 2-cells are called simply \emph{monad
  2-cells} --- can be viewed as a special case of the double category of
functors, lax and colax transformations, and modifications between two
2-categories from~\cite[Proposition A.8]{fairbanks-shulman}. It can
also be viewed as a special case of the double category $\MMnd(\DD)$
of loose monads in a double category
from~\cite{fiore-gambino-kock}. The second --- whose 2-cells we call
\emph{monad specializations}\footnote{The name is motivated
  by~\cite{garner:ionads,fairbanks-carlson-spivak}, in which this kind
  of 2-cell between colax 1-cells of comonads generalizes the usual
  notion of specialization between points in a topological space.} ---
can be viewed as a special case of the double category
$\EEM(\DD)$ 
of loose monads in a double category $\DD$ mentioned in~\cite[Chapter
7]{clarke}.


Like~\cite{hinze-marsden:string,hinze-marsden}, we use the string
diagram calculus for 2-categories. This is to make definitions and
calculations more transparent by visualizing their structure. For
readers not comfortable with string diagrams, we include pasting
diagrams as well. Also, having the pasting diagrams beside the string
diagrams permits more artistic liberty in the string diagrams with
less danger of confusion. We frequently draw individual regions
(0-cells), strings (1-cells), and nodes (2-cells) with multiple colors
or special textures, suggesting certain properties of the cells they
represent.
To match the diagrams, we write composition in a 2-category in
diagrammatic order $\fF \then \fG$, rather than the usual Leibniz
order $\fG \circ \fF$.

Knowledge of the definition of 2-category (and in
\cref{sec:doublemonads}, double category) is assumed; see
e.g.~\cite{kelly-street}. It is also useful to know about adjunctions
and extensions in 2-categories, though we spell out details when these
come up. The first three sections are prerequisites for the others,
which mostly do not depend on one another.


\pagebreak

\chapter{Monads}\label{sec:monads}

\begin{definition}
  A \emph{monad}\footnotemark\ on an object $\oS$ in a 2-category is a
  1-cell
  \[\preq
    \mathclap{\mT \colon \oS \to \oS}
    \qqq
    \mathclap{
      \begin{tikzpicture}[yscale=.5]
        \begin{scope}
          \clip [boundc] (-6,-6) rectangle (6,6);
          \fill [bg] (-6,-6) rectangle (6,6);
          \draw [omon,red] (0,-6) -- (0,6);
        \end{scope}
        \draw [bound] (-6,-6) rectangle (6,6);
        \node [label] at (-3,0) {$\oS$};
        \node [label] at (3,0) {$\oS$};
        \node [label,red,above] at (0,6) {$\mT$};
        \node [label,red,below] at (0,-6) {$\phantom{\mT}$};
      \end{tikzpicture}
    }
  \]
  equipped with a 2-cell $\mu \colon \mT \then \mT \Rightarrow \mT$, called the
  \emph{multiplication}
  \[\preq
    \mathclap{
      \begin{tikzpicture}
        \node [ob] (s) at (-7,0) {$\oS$};
        \node [ob] (m) at (0,7) {$\oS$};
        \node [ob] (t) at (7,0) {$\oS$};
        \draw [a] (s) -- node[arr,al] {$\mT$} (m);
        \draw [a] (m) -- node[arr,ar] {$\mT$} (t);
        \draw [a] (s) -- node[arr,below] {$\mT$} (t);
        \node [cell] at (0,2.5) {$\mu$};
      \end{tikzpicture}
    }
    \qqq
    \mathclap{
      \begin{tikzpicture}
        \begin{scope}
          \clip [boundc] (-6,-6) rectangle (6,6);
          \fill [bg] (-6,-6) rectangle (6,6);
          \draw [omon,red] (-2.6,6) .. controls +(0,-3.5) and +(-1,1) .. (0,0);
          \draw [omon,red] (2.6,6) .. controls +(0,-3.5) and +(1,1) .. (0,0);
          \draw [omon,red] (0,-6) -- (0,0);
          \node [tmon,red] at (0,0) {};
        \end{scope}
        \draw [bound] (-6,-6) rectangle (6,6);
      \end{tikzpicture}
    }
  \]
  and a 2-cell $\eta \colon \id_{\oS} \Rightarrow \mT$,
  called the \emph{unit}
  \[\preq
    \mathclap{
      \begin{tikzpicture}
        \node [ob] (s) at (-7,0) {$\oS$};
        \node [ob] (t) at (7,0) {$\oS$};
        \draw [eq] (s) .. controls +(4.5,7.5) and +(-4.5,7.5) .. (t);
        \draw [a] (s) -- node[arr,below] {$\mT$} (t);
        \node [cell] at (0,2.5) {$\eta$};
      \end{tikzpicture}
    }
    \qqq
    \mathclap{
      \begin{tikzpicture}
        \begin{scope}
          \clip [boundc] (-6,-6) rectangle (6,6);
          \fill [bg] (-6,-6) rectangle (6,6);
          \draw [omon,red] (0,-6) -- (0,0);
          \node [tmon,red] at (0,0) {};
        \end{scope}
        \draw [bound] (-6,-6) rectangle (6,6);
      \end{tikzpicture}
    }
  \]
  satisfying the associativity and unit laws
  \[\preq
    \mathclap{
      \begin{tikzpicture}[xscale=.75,yscale=1.125]
        \node [ob] (s) at (-14,0) {$\oS$};
        \node [ob] (mm) at (-9,7) {$\oS$};
        \node [ob] (m) at (2,7) {$\oS$};
        \node [ob] (t) at (7,0) {$\oS$};
        \draw [a] (s) -- node[arr,al] {$\mT$} (mm);
        \draw [a] (mm) -- node[arr,above] {$\mT$} (m);
        \draw [a] (s) -- node[arr,br] {$\mT$} (m);
        \draw [a] (m) -- node[arr,ar] {$\mT$} (t);
        \draw [a] (s) -- node[arr,below] {$\mT$} (t);
        \node [cell] at (.5,2.5) {$\mu$};
        \node [cell] at (-6.75,4.75) {$\mu$};
      \end{tikzpicture}
      \eq
      \begin{tikzpicture}[xscale=.75,yscale=1.125]
        \node [ob] (s) at (-7,0) {$\oS$};
        \node [ob] (m) at (-2,7) {$\oS$};
        \node [ob] (mm) at (9,7) {$\oS$};
        \node [ob] (t) at (14,0) {$\oS$};
        \draw [a] (m) -- node[arr,above] {$\mT$} (mm);
        \draw [a] (mm) -- node[arr,ar] {$\mT$} (t);
        \draw [a] (s) -- node[arr,al] {$\mT$} (m);
        \draw [a] (m) -- node[arr,bl] {$\mT$} (t);
        \draw [a] (s) -- node[arr,below] {$\mT$} (t);
        \node [cell] at (-.5,2.5) {$\mu$};
        \node [cell] at (6.75,4.75) {$\mu$};
      \end{tikzpicture}
    }
    \qqq
    \mathclap{
      \begin{tikzpicture}
        \begin{scope}
          \clip [boundc] (-7,-7) rectangle (7,7);
          \fill [bg] (-7,-7) rectangle (7,7);
          \draw [omon,red] (-4,7) .. controls +(0,-2.5) and +(-1,1) .. (-2,2.5);
          \draw [omon,red] (0,7) .. controls +(0,-2.5) and +(1,1) .. (-2,2.5);
          \draw [omon,red] (4,7) .. controls +(0,-5.5) and +(1,1) .. (0,-2.5);
          \draw [omon,red] (-2,2.5) .. controls +(0,-2.5) and +(-1,1) .. (0,-2.5);
          \draw [omon,red] (0,-7) -- (0,-2.5);
          \node [tmon,red] at (-2,2.5) {};
          \node [tmon,red] at (0,-2.5) {};
        \end{scope}
        \draw [bound] (-7,-7) rectangle (7,7);
      \end{tikzpicture}
      \eqq
      \begin{tikzpicture}
        \begin{scope}
          \clip [boundc] (-7,-7) rectangle (7,7);
          \fill [bg] (-7,-7) rectangle (7,7);
          \draw [omon,red] (-4,7) .. controls +(0,-5.5) and +(-1,1) .. (0,-2.5);
          \draw [omon,red] (0,7) .. controls +(0,-2.5) and +(-1,1) .. (2,2.5);
          \draw [omon,red] (4,7) .. controls +(0,-2.5) and +(1,1) .. (2,2.5);
          \draw [omon,red] (2,2.5) .. controls +(0,-2.5) and +(1,1) .. (0,-2.5);
          \draw [omon,red] (0,-7) -- (0,-2.5);
          \node [tmon,red] at (2,2.5) {};
          \node [tmon,red] at (0,-2.5) {};
        \end{scope}
        \draw [bound] (-7,-7) rectangle (7,7);
      \end{tikzpicture}
    }
  \]
  \vspace{-5pt}
  \[\preq
    \mathclap{
      \begin{tikzpicture}
        \path (0,11) -- (0,-4);
        \node [ob] (s) at (-7,0) {$\oS$};
        \node [ob] (m) at (3,7) {$\oS$};
        \node [ob] (t) at (7,0) {$\oS$};
        \draw [a] (s) -- node[arr,br] {$\mT$} (m);
        \draw [a] (m) -- node[arr,ar] {$\mT$} (t);
        \draw [a] (s) -- node[arr,below] {$\mT$} (t);
        \draw [eq] (s) .. controls +(1,6.5) and +(-6.5,1) .. (m);
        \node [cell] at (2,2.5) {$\mu$};
        \node [cell] at (-3,4.75) {$\eta$};
      \end{tikzpicture}
      \eq
      \begin{tikzpicture}
        \path (0,11) -- (0,-4);
        \node [ob] (s) at (-6,0) {$\oS$};
        \node [ob] (t) at (6,0) {$\oS$};
        \draw [a] (s) .. controls +(3,9) and +(-3,9) .. node[arr,above] {$\mT$} (t);
        \draw [a] (s) -- node[arr,below] {$\mT$} (t);
        \node [cell] at (0,3.25) {$=$};
      \end{tikzpicture}
      \eq
      \begin{tikzpicture}
        \path (0,11) -- (0,-4);
        \node [ob] (s) at (-7,0) {$\oS$};
        \node [ob] (m) at (-3,7) {$\oS$};
        \node [ob] (t) at (7,0) {$\oS$};
        \draw [a] (s) -- node[arr,al] {$\mT$} (m);
        \draw [a] (m) -- node[arr,bl] {$\mT$} (t);
        \draw [a] (s) -- node[arr,below] {$\mT$} (t);
        \draw [eq] (m) .. controls +(6.5,1) and +(-1,6.5) .. (t);
        \node [cell] at (-2,2.5) {$\mu$};
        \node [cell] at (3,4.75) {$\eta$};
      \end{tikzpicture}
    }
    \qqq
    \mathclap{
      \begin{tikzpicture}
        \begin{scope}
          \clip [boundc] (-7,-7) rectangle (7,7);
          \fill [bg] (-7,-7) rectangle (7,7);
          \draw [omon,red] (2.6,7) .. controls +(0,-5.5) and +(1,1) .. (0,-2.5);
          \draw [omon,red] (-2,2.5) .. controls +(0,-2.5) and +(-1,1) .. (0,-2.5);
          \draw [omon,red] (0,-7) -- (0,-2.5);
          \node [tmon,red] at (-2,2.5) {};
          \node [tmon,red] at (0,-2.5) {};
        \end{scope}
        \draw [bound] (-7,-7) rectangle (7,7);
      \end{tikzpicture}
      \eqq
      \begin{tikzpicture}
        \begin{scope}
          \clip [boundc] (-7,-7) rectangle (7,7);
          \fill [bg] (-7,-7) rectangle (7,7);
          \draw [omon,red] (0,-7) -- (0,7);
        \end{scope}
        \draw [bound] (-7,-7) rectangle (7,7);
      \end{tikzpicture}
      \eqq
      \begin{tikzpicture}
        \begin{scope}
          \clip [boundc] (-7,-7) rectangle (7,7);
          \fill [bg] (-7,-7) rectangle (7,7);
          \draw [omon,red] (-2.6,7) .. controls +(0,-5.5) and +(-1,1) .. (0,-2.5);
          \draw [omon,red] (2,2.5) .. controls +(0,-2.5) and +(1,1) .. (0,-2.5);
          \draw [omon,red] (0,-7) -- (0,-2.5);
          \node [tmon,red] at (2,2.5) {};
          \node [tmon,red] at (0,-2.5) {};
        \end{scope}
        \draw [bound] (-7,-7) rectangle (7,7);
      \end{tikzpicture}
    }
  \]
  \afterimage
  
  A \emph{monad map} between monads $\mT_1$ and $\mT_2$ on $\oS$ is a
  2-cell $\phi \colon \mT_1 \Rightarrow \mT_2$
  \vspace{-5pt}
  \[\preq
    \mathclap{
      \begin{tikzpicture}
        \node [ob] (s) at (-7,0) {$\oS$};
        \node [ob] (t) at (7,0) {$\oS$};
        \draw [a] (s) .. controls +(5,3) and +(-5,3) .. node[arr,above] {$\mT_1$} (t);
        \draw [a] (s) .. controls +(5,-3) and +(-5,-3) .. node[arr,below] {$\mT_2$} (t);
        \node [cell] at (0,0) {$\phi$};
      \end{tikzpicture}
    }
    \qqq
    \mathclap{
      \begin{tikzpicture}
        \begin{scope}
          \clip [boundc] (-6,-6) rectangle (6,6);
          \fill [bg] (-6,-6) rectangle (6,6);
          \draw [omon,red] (0,6) -- (0,0);
          \draw [omon,blue] (0,-6) -- (0,0);
          \node [thom=purp] at (0,0) {};
        \end{scope}
        \draw [bound] (-6,-6) rectangle (6,6);
        \node [label,purp,left=4] at (0,0) {$\phi$};
        \node [label,red,above] at (0,6) {$\mT_1$};
        \node [label,blue,below] at (0,-6) {$\mT_2$};
      \end{tikzpicture}
    }
    \vspace{-5pt}
  \]
  satisfying
  
  \[\preq
    \mathclap{
      \begin{tikzpicture}[xscale=.875]
        \path (0,10) -- (0,-10);
        \node [ob] (s) at (-10,0) {$\oS$};
        \node [ob] (m) at (0,7) {$\oS$};
        \node [ob] (t) at (10,0) {$\oS$};
        \draw [a] (s) -- node[arr,br] {$\mT_2$} (m);
        \draw [a] (m) -- node[arr,bl] {$\mT_2$} (t);
        \draw [a] (s) -- node[arr,below] {$\mT_2$} (t);
        \draw [a] (s) .. controls +(-1,6) and +(-6,2) .. node[arr,al] {$\mT_1$} (m);
        \draw [a] (m) .. controls +(6,2) and +(1,6) .. node[arr,ar] {$\mT_1$} (t);
        \node [cell] at (0,2.5) {$\mu$};
        \node [cell] at (-6.5,5) {$\phi$};
        \node [cell] at (6.5,5) {$\phi$};
      \end{tikzpicture}
      \eqq
      \begin{tikzpicture}[xscale=.875]
        \path (0,10) -- (0,-10);
        \node [ob] (s) at (-10,0) {$\oS$};
        \node [ob] (m) at (0,7) {$\oS$};
        \node [ob] (t) at (10,0) {$\oS$};
        \draw [a] (s) -- node[arr,al] {$\mT_1$} (m);
        \draw [a] (m) -- node[arr,ar] {$\mT_1$} (t);
        \draw [a] (s) -- node[arr,above=-2,pos=.3] {$\mT_1$} (t);
        \draw [a] (s) .. controls +(6,-6) and +(-6,-6) .. node[arr,below] {$\mT_2$} (t);
        \node [cell] at (0,2.5) {$\mu$};
        \node [cell] at (0,-2.5) {$\phi$};
      \end{tikzpicture}
    }
    \qqq
    \mathclap{
      \begin{tikzpicture}
        \begin{scope}
          \clip [boundc] (-7,-7) rectangle (7,7);
          \fill [bg] (-7,-7) rectangle (7,7);
          \draw [omon,red] (-2.6,7) -- (-2.6,2.5);
          \draw [omon,red] (2.6,7) -- (2.6,2.5);
          \draw [omon,blue] (-2.6,2.5) .. controls +(0,-3.5) and +(-1,1) .. (0,-2.5);
          \draw [omon,blue] (2.6,2.5) .. controls +(0,-3.5) and +(1,1) .. (0,-2.5);
          \draw [omon,blue] (0,-7) -- (0,-2.5);
          \node [thom=purp] at (2.6,2.5) {};
          \node [thom=purp] at (-2.6,2.5) {};
          \node [tmon,blue] at (0,-2.5) {};
        \end{scope}
        \draw [bound] (-7,-7) rectangle (7,7);
      \end{tikzpicture}
      \eqq
      \begin{tikzpicture}
        \begin{scope}
          \clip [boundc] (-7,-7) rectangle (7,7);
          \fill [bg] (-7,-7) rectangle (7,7);
          \draw [omon,red] (-2.6,7) .. controls +(0,-2.5) and +(-1,1) .. (0,2.5);
          \draw [omon,red] (2.6,7) .. controls +(0,-2.5) and +(1,1) .. (0,2.5);
          \draw [omon,red] (0,2.5) -- (0,-2.5);
          \draw [omon,blue] (0,-7) -- (0,-2.5);
          \node [tmon,red] at (0,2.5) {};
          \node [thom=purp] at (0,-2.5) {};
        \end{scope}
        \draw [bound] (-7,-7) rectangle (7,7);
      \end{tikzpicture}
    }
  \]
  \vspace{-10pt}
  \[\preq
    \mathclap{
      \begin{tikzpicture}[xscale=.875]
        \path (0,8) -- (0,-10);
        \node [ob] (s) at (-10,0) {$\oS$};
        \node [ob] (t) at (10,0) {$\oS$};
        \draw [a] (s) -- node[arr,below] {$\mT_2$} (t);
        \draw [eq] (s) .. controls +(6,8) and +(-6,8) .. (t);
        \node [cell] at (0,2.5) {$\eta$};
      \end{tikzpicture}
      \eqq
      \begin{tikzpicture}[xscale=.875]
        \path (0,8) -- (0,-10);
        \node [ob] (s) at (-10,0) {$\oS$};
        \node [ob] (t) at (10,0) {$\oS$};
        \draw [a] (s) -- node[arr,above=-2,pos=.3] {$\mT_1$} (t);
        \draw [eq] (s) .. controls +(6,8) and +(-6,8) .. (t);
        \draw [a] (s) .. controls +(6,-6) and +(-6,-6) .. node[arr,below] {$\mT_2$} (t);
        \node [cell] at (0,2.5) {$\eta$};
        \node [cell] at (0,-2.5) {$\phi$};
      \end{tikzpicture}
    }
    \qqq
    \mathclap{
      \begin{tikzpicture}
        \begin{scope}
          \clip [boundc] (-7,-7) rectangle (7,7);
          \fill [bg] (-7,-7) rectangle (7,7);
          \draw [omon,blue] (0,-7) -- (0,0);
          \node [tmon,blue] at (0,0) {};
        \end{scope}
        \draw [bound] (-7,-7) rectangle (7,7);
      \end{tikzpicture}
      \eqq
      \begin{tikzpicture}
        \begin{scope}
          \clip [boundc] (-7,-7) rectangle (7,7);
          \fill [bg] (-7,-7) rectangle (7,7);
          \draw [omon,red] (0,2.5) -- (0,-2.5);
          \draw [omon,blue] (0,-7) -- (0,-2.5);
          \node [tmon,red] at (0,2.5) {};
          \node [thom=purp] at (0,-2.5) {};
        \end{scope}
        \draw [bound] (-7,-7) rectangle (7,7);
      \end{tikzpicture}
    }
  \]
  \afterimage
\end{definition}

\begin{convention}
  In string diagrams, we write the unit and multiplication 2-cells as
  unmarked nodes the same color as the string denoting the 1-cell
  $\mT$, as in \cite{willerton}. This is to suggest the associativity
  and unit laws by topological deformation, as if $\mT$ were a fluid
  between surrounding regions of $\oS$. More generally we may write
  $n$-ary multiplication $\mu \colon \mT^n \Rightarrow \mT$ (obtained
  by repeatedly composing the ordinary multiplication) as an unmarked
  node of the same color.

  \[
    \begin{tikzpicture}
      \begin{scope}
        \clip [boundc] (-7,-7) rectangle (7,7);
        \fill [bg] (-7,-7) rectangle (7,7);
        \draw [omon,red] (-4,7) .. controls +(0,-5) and +(-1,1) .. (0,0);
        \draw [omon,red] (0,7) -- (0,0);
        \draw [omon,red] (4,7) .. controls +(0,-5) and +(1,1) .. (0,0);
        \draw [omon,red] (0,-7) -- (0,0);
        \node [tmon,red] at (0,0) {};
      \end{scope}
      \draw [bound] (-7,-7) rectangle (7,7);
    \end{tikzpicture}
    \eqquad\coloneqq\eqquad
    \begin{tikzpicture}
      \begin{scope}
        \clip [boundc] (-7,-7) rectangle (7,7);
        \fill [bg] (-7,-7) rectangle (7,7);
        \draw [omon,red] (-4,7) .. controls +(0,-2.5) and +(-1,1) .. (-2,2.5);
        \draw [omon,red] (0,7) .. controls +(0,-2.5) and +(1,1) .. (-2,2.5);
        \draw [omon,red] (4,7) .. controls +(0,-5.5) and +(1,1) .. (0,-2.5);
        \draw [omon,red] (-2,2.5) .. controls +(0,-2.5) and +(-1,1) .. (0,-2.5);
        \draw [omon,red] (0,-7) -- (0,-2.5);
        \node [tmon,red] at (-2,2.5) {};
        \node [tmon,red] at (0,-2.5) {};
      \end{scope}
      \draw [bound] (-7,-7) rectangle (7,7);
    \end{tikzpicture}
    \eqq
    \begin{tikzpicture}
      \begin{scope}
        \clip [boundc] (-7,-7) rectangle (7,7);
        \fill [bg] (-7,-7) rectangle (7,7);
        \draw [omon,red] (-4,7) .. controls +(0,-5.5) and +(-1,1) .. (0,-2.5);
        \draw [omon,red] (0,7) .. controls +(0,-2.5) and +(-1,1) .. (2,2.5);
        \draw [omon,red] (4,7) .. controls +(0,-2.5) and +(1,1) .. (2,2.5);
        \draw [omon,red] (2,2.5) .. controls +(0,-2.5) and +(1,1) .. (0,-2.5);
        \draw [omon,red] (0,-7) -- (0,-2.5);
        \node [tmon,red] at (2,2.5) {};
        \node [tmon,red] at (0,-2.5) {};
      \end{scope}
      \draw [bound] (-7,-7) rectangle (7,7);
    \end{tikzpicture}
  \]
  \afterimage
\end{convention}

\footnotetext{The definitions of monad and monad map make sense more
  generally in a \emph{virtual 2-category} (following the terminology
  scheme of~\cite{cruttwell-shulman}), which is like a 2-category but
  where 1-cells cannot compose and each 2-cell has a path of
  compatible 1-cells as input but only one 1-cell as output. A monad
  can be defined more concisely as a functor from the terminal virtual
  2-category.}

\pagebreak
\chapter{Modules}

\begin{definition}\label{def:module}
  Let $\mT$ be a monad on $\oS$. A (right)
  \emph{$\mT$-module}\footnote{The definitions of $\mT$-module and
    $\mT$-module map make sense in a virtual 2-category. A module over
    a monad can be defined more concisely as a functor from the
    terminal \emph{virtual actegory}: a virtual 2-category with two
    objects $\oX$ and $\oS$ and no 1-cells $\oX \to \oX$ or
    $\oS \to \oX$.} is a 1-cell
  \[\preq
    \mathclap{\bM\colon \oX \to \oS}
    \qqq
    \mathclap{
      \begin{tikzpicture}[yscale=.5]
        \begin{scope}
          \clip [boundc] (-6,-6) rectangle (6,6);
          \fill [bg] (-6,-6) rectangle (6,6);
          \fill [bgd] (-6,-6) rectangle (0,6);
          \draw [omods=red] (0,6) -- (0,-6);
        \end{scope}
        \draw [bound] (-6,-6) rectangle (6,6);
        \node [label] at (-3.125,0) {$\oX$};
        \node [label] at (3,0) {$\oS$};
        \node [label,above] at (0,6) {$\bM$};
        \node [label,below] at (0,-6) {$\phantom{\bM}$};
      \end{tikzpicture}
    }
  \]
  equipped with a 2-cell
  $\rho \colon \bM \then \mT \Rightarrow \bM$
  \[\preq
    \mathclap{
      \begin{tikzpicture}
        \node [ob] (s) at (-7,0) {$\oX$};
        \node [ob] (m) at (0,7) {$\oS$};
        \node [ob] (t) at (7,0) {$\oS$};
        \draw [a] (s) -- node[arr,al] {$\bM$} (m);
        \draw [a] (m) -- node[arr,ar] {$\mT$} (t);
        \draw [a] (s) -- node[arr,below] {$\bM$} (t);
        \node [cell] at (0,2.5) {$\rho$};
      \end{tikzpicture}
    }
    \qqq
    \mathclap{
      \begin{tikzpicture}
        \begin{scope}
          \clip [boundc] (-6,-6) rectangle (6,6);
          \fill [bg] (-6,-6) rectangle (6,6);
          \renewcommand{\mypath}{(-2.6,6) .. controls +(0,-3.5) and +(-1,1) .. (0,0) -- (0,-6)}
          \fill [bgd] \mypath -- (-6,-6) -- (-6,6) -- cycle;
          \draw [omon,red] (2.6,6) .. controls +(0,-3.5) and +(1,1) .. (0,0);
          \draw [omods=red] \mypath;
        \end{scope}
        \draw [bound] (-6,-6) rectangle (6,6);
      \end{tikzpicture}
    }
  \]
  satisfying the associativity and unit laws
  
  \[\preq
    \mathclap{
      \begin{tikzpicture}[xscale=.75,yscale=1.125]
        \node [ob] (s) at (-14,0) {$\oX$};
        \node [ob] (mm) at (-9,7) {$\oS$};
        \node [ob] (m) at (2,7) {$\oS$};
        \node [ob] (t) at (7,0) {$\oS$};
        \draw [a] (s) -- node[arr,al] {$\bM$} (mm);
        \draw [a] (mm) -- node[arr,above] {$\mT$} (m);
        \draw [a] (s) -- node[arr,br] {$\bM$} (m);
        \draw [a] (m) -- node[arr,ar] {$\mT$} (t);
        \draw [a] (s) -- node[arr,below] {$\bM$} (t);
        \node [cell] at (.5,2.5) {$\rho$};
        \node [cell] at (-7,5) {$\rho$};
      \end{tikzpicture}
      \eqq
      \begin{tikzpicture}[xscale=.75,yscale=1.125]
        \node [ob] (s) at (-7,0) {$\oX$};
        \node [ob] (m) at (-2,7) {$\oS$};
        \node [ob] (mm) at (9,7) {$\oS$};
        \node [ob] (t) at (14,0) {$\oS$};
        \draw [a] (m) -- node[arr,above] {$\mT$} (mm);
        \draw [a] (mm) -- node[arr,ar] {$\mT$} (t);
        \draw [a] (s) -- node[arr,al] {$\bM$} (m);
        \draw [a] (m) -- node[arr,bl] {$\mT$} (t);
        \draw [a] (s) -- node[arr,below] {$\bM$} (t);
        \node [cell] at (-.5,2.5) {$\rho$};
        \node [cell] at (7,5) {$\mu$};
      \end{tikzpicture}
    }
    \qqq
    \mathclap{
      \begin{tikzpicture}
        \begin{scope}
          \clip [boundc] (-7,-7) rectangle (7,7);
          \fill [bg] (-7,-7) rectangle (7,7);
          \renewcommand{\mypath}{(-4,7) .. controls +(0,-2.5) and +(-1,1) .. (-2,2.5) .. controls +(0,-2.5) and +(-1,1) .. (0,-2.5) -- (0,-7)}
          \fill [bgd] \mypath -- (-7,-7) -- (-7,7) -- cycle;
          \draw [omon,red] (0,7) .. controls +(0,-2.5) and +(1,1) .. (-2,2.5);
          \draw [omon,red] (4,7) .. controls +(0,-5.5) and +(1,1) .. (0,-2.5);
          \draw [omods=red] \mypath;
        \end{scope}
        \draw [bound] (-7,-7) rectangle (7,7);
      \end{tikzpicture}
      \eqq
      \begin{tikzpicture}
        \begin{scope}
          \clip [boundc] (-7,-7) rectangle (7,7);
          \fill [bg] (-7,-7) rectangle (7,7);
          \renewcommand{\mypath}{(-4,7) .. controls +(0,-5.5) and +(-1,1) .. (0,-2.5) -- (0,-7)}
          \fill [bgd] \mypath -- (-7,-7) -- (-7,7) -- cycle;
          \draw [omon,red] (0,7) .. controls +(0,-2.5) and +(-1,1) .. (2,2.5);
          \draw [omon,red] (4,7) .. controls +(0,-2.5) and +(1,1) .. (2,2.5);
          \draw [omon,red] (2,2.5) .. controls +(0,-2.5) and +(1,1) .. (0,-2.5);
          \node [tmon,red] at (2,2.5) {};
          \draw [omods=red] \mypath;
        \end{scope}
        \draw [bound] (-7,-7) rectangle (7,7);
      \end{tikzpicture}
    }
  \]
  \vspace{-5pt}
  \[\preq
    \mathclap{
      \begin{tikzpicture}[xscale=.75,yscale=1.125]
        \path (0,11) -- (0,-4);
        \node [ob] (s) at (-14,0) {$\oX$};
        \node [ob] (t) at (7,0) {$\oS$};
        \draw [a] (s) .. controls +(4,8.5) and +(-4,8.5) .. node[arr,above] {$\bM$} (t);
        \draw [a] (s) -- node[arr,below] {$\bM$} (t);
        \node [cell] at (-3.5,3) {$=$};
      \end{tikzpicture}
      \eqq
      \begin{tikzpicture}[xscale=.75,yscale=1.125]
        \path (0,11) -- (0,-4);
        \node [ob] (s) at (-7,0) {$\oX$};
        \node [ob] (m) at (-2,7) {$\oS$};
        \node [ob] (t) at (14,0) {$\oS$};
        \draw [a] (s) -- node[arr,al] {$\bM$} (m);
        \draw [a] (m) -- node[arr,bl] {$\mT$} (t);
        \draw [a] (s) -- node[arr,below] {$\bM$} (t);
        \draw [eq] (m) .. controls +(8,1.5) and +(-1.5,8) .. (t);
        \node [cell] at (-.5,2.5) {$\rho$};
        \node [cell] at (7,5) {$\eta$};
      \end{tikzpicture}
    }
    \qqq
    \mathclap{
      \begin{tikzpicture}
        \begin{scope}
          \clip [boundc] (-7,-7) rectangle (7,7);
          \fill [bg] (-7,-7) rectangle (7,7);
          \fill [bgd] (-7,-7) rectangle (0,7);
          \draw [omods=red] (0,7) -- (0,-7);
        \end{scope}
        \draw [bound] (-7,-7) rectangle (7,7);
      \end{tikzpicture}
      \eqq
      \begin{tikzpicture}
        \begin{scope}
          \clip [boundc] (-7,-7) rectangle (7,7);
          \fill [bg] (-7,-7) rectangle (7,7);
          \renewcommand{\mypath}{(-2.6,7) .. controls +(0,-5.5) and +(-1,1) .. (0,-2.5) -- (0,-7)}
          \fill [bgd] \mypath -- (-7,-7) -- (-7,7) -- cycle;
          \draw [omon,red] (2,2.5) .. controls +(0,-2.5) and +(1,1) .. (0,-2.5);
          \node [tmon,red] at (2,2.5) {};
          \draw [omods=red] \mypath;
        \end{scope}
        \draw [bound] (-7,-7) rectangle (7,7);
      \end{tikzpicture}
    }
  \]
  \afterimage
  
  A \emph{$\mT$-module map} between $\mT$-modules
  $\bM_1, \bM_2 \colon \oX \to \oS$ is a 2-cell
  $\phi \colon \bM_1 \Rightarrow \bM_2$
  \[\preq
    \mathclap{
      \begin{tikzpicture}
        \node [ob] (s) at (-7,0) {$\oX$};
        \node [ob] (t) at (7,0) {$\oS$};
        \draw [a] (s) .. controls +(5,3) and +(-5,3) .. node[arr,above] {$\bM_1$} (t);
        \draw [a] (s) .. controls +(5,-3) and +(-5,-3) .. node[arr,below] {$\bM_2$} (t);
        \node [cell] at (0,0) {$\phi$};
      \end{tikzpicture}
    }
    \qqq
    \mathclap{
      \begin{tikzpicture}
        \begin{scope}
          \clip [boundc] (-6,-6) rectangle (6,6);
          \fill [bg] (-6,-6) rectangle (6,6);
          \node [tmhomh,red] at (0,0) {};
          \fill [bgd] (-6,-6) rectangle (0,6);
          \draw [omods=red] (0,6) -- (0,0);
          \draw [omods=red,white] (0,0) -- (0,-6);
          \node [tmhom] at (0,0) {};
        \end{scope}
        \draw [bound] (-6,-6) rectangle (6,6);
        \node [label,left=3.5] at (0,0) {$\phi$};
        \node [label,above] at (0,6) {$\bM_1$};
        \node [label,below] at (0,-6) {$\bM_2$};
      \end{tikzpicture}
    }
  \]
  satisfying
  
  \[\preq
    \mathclap{
      \begin{tikzpicture}[xscale=.875]
        \path (0,10) -- (0,-10);
        \node [ob] (s) at (-10,0) {$\oX$};
        \node [ob] (m) at (0,7) {$\oS$};
        \node [ob] (t) at (10,0) {$\oS$};
        \draw [a] (s) -- node[arr,brcl,pos=.33] {$\bM_2$} (m);
        \draw [a] (m) -- node[arr,ar] {$\mT$} (t);
        \draw [a] (s) -- node[arr,below] {$\bM_2$} (t);
        \draw [a] (s) .. controls +(-1,6) and +(-6,2) .. node[arr,al] {$\bM_1$} (m);
        \node [cell] at (0,2.5) {$\rho$};
        \node [cell] at (-6.5,5) {$\phi$};
      \end{tikzpicture}
      \eqq
      \begin{tikzpicture}[xscale=.875]
        \path (0,10) -- (0,-10);
        \node [ob] (s) at (-10,0) {$\oX$};
        \node [ob] (m) at (0,7) {$\oS$};
        \node [ob] (t) at (10,0) {$\oS$};
        \draw [a] (s) -- node[arr,al] {$\bM_1$} (m);
        \draw [a] (m) -- node[arr,ar] {$\mT$} (t);
        \draw [a] (s) -- node[arr,above=-2,pos=.3] {$\bM_1$} (t);
        \draw [a] (s) .. controls +(6,-6) and +(-6,-6) .. node[arr,below] {$\bM_2$} (t);
        \node [cell] at (0,2.5) {$\rho$};
        \node [cell] at (0,-2.5) {$\phi$};
      \end{tikzpicture}
    }
    \qqq
    \mathclap{
      \begin{tikzpicture}
        \begin{scope}
          \clip [boundc] (-7,-7) rectangle (7,7);
          \fill [bg] (-7,-7) rectangle (7,7);
          \renewcommand{\mypatha}{(-2.6,7) -- (-2.6,2.5)}
          \renewcommand{\mypathb}{(-2.6,2.5) .. controls +(0,-3.5) and +(-1,1) .. (0,-2.5) --  (0,-7)}
          \node [tmhomh,red] at (-2.6,2.5) {};
          \fill [bgd] \mypatha -- \mypathb -- (-7,-7) -- (-7,7) -- cycle;
          \draw [omon,red] (2.6,7) -- (2.6,2.5) .. controls +(0,-3.5) and +(1,1) .. (0,-2.5);
          \draw [omods=red] \mypatha;
          \draw [omods=red,white] \mypathb;
          \node [tmhom] at (-2.6,2.5) {};
        \end{scope}
        \draw [bound] (-7,-7) rectangle (7,7);
      \end{tikzpicture}
      \eqq
      \begin{tikzpicture}
        \begin{scope}
          \clip [boundc] (-7,-7) rectangle (7,7);
          \fill [bg] (-7,-7) rectangle (7,7);
          \renewcommand{\mypatha}{(-2.6,7) .. controls +(0,-2.5) and +(-1,1) .. (0,2.5) -- (0,-2.5)}
          \renewcommand{\mypathb}{(0,-2.5) -- (0,-7)}
          \node [tmhomh,red] at (0,-2.5) {};
          \fill [bgd] \mypatha -- \mypathb -- (-7,-7) -- (-7,7) -- cycle;
          \draw [omon,red] (2.6,7) .. controls +(0,-2.5) and +(1,1) .. (0,2.5);
          \draw [omods=red] \mypatha;
          \draw [omods=red,white] \mypathb;
          \node [tmhom] at (0,-2.5) {};
        \end{scope}
        \draw [bound] (-7,-7) rectangle (7,7);
      \end{tikzpicture}
    }
  \]
\end{definition}

\pagebreak

\begin{convention}
  In string diagrams, we write the 1-cell $\bM$ as a string coated on
  the right by a film the same color as $\mT$, and we write the 2-cell
  $\rho$ as an unmarked node merging $\mT$ with the film. This is to
  suggest the module laws by topological deformation.


  \[
    \begin{tikzpicture}
      \begin{scope}
        \clip [boundc] (-7,-7) rectangle (7,7);
        \fill [bg] (-7,-7) rectangle (7,7);
        \renewcommand{\mypath}{(-4,7) .. controls +(0,-5) and +(-1,1) .. (0,0) -- (0,-7)}
        \fill [bgd] \mypath -- (-7,-7) -- (-7,7) -- cycle;
        \draw [omon,red] (0,7) -- (0,0);
        \draw [omon,red] (4,7) .. controls +(0,-5) and +(1,1) .. (0,0);
        \draw [omods=red] \mypath;
      \end{scope}
      \draw [bound] (-7,-7) rectangle (7,7);
    \end{tikzpicture}
    \eqquad\coloneqq\eqquad
    \begin{tikzpicture}
      \begin{scope}
        \clip [boundc] (-7,-7) rectangle (7,7);
        \fill [bg] (-7,-7) rectangle (7,7);
        \renewcommand{\mypath}{(-4,7) .. controls +(0,-2.5) and +(-1,1) .. (-2,2.5) .. controls +(0,-2.5) and +(-1,1) .. (0,-2.5) -- (0,-7)}
        \fill [bgd] \mypath -- (-7,-7) -- (-7,7) -- cycle;
        \draw [omon,red] (0,7) .. controls +(0,-2.5) and +(1,1) .. (-2,2.5);
        \draw [omon,red] (4,7) .. controls +(0,-5.5) and +(1,1) .. (0,-2.5);
        \draw [omods=red] \mypath;
      \end{scope}
      \draw [bound] (-7,-7) rectangle (7,7);
    \end{tikzpicture}
    \eqq
    \begin{tikzpicture}
      \begin{scope}
        \clip [boundc] (-7,-7) rectangle (7,7);
        \fill [bg] (-7,-7) rectangle (7,7);
        \renewcommand{\mypath}{(-4,7) .. controls +(0,-5.5) and +(-1,1) .. (0,-2.5) -- (0,-7)}
        \fill [bgd] \mypath -- (-7,-7) -- (-7,7) -- cycle;
        \draw [omon,red] (0,7) .. controls +(0,-2.5) and +(-1,1) .. (2,2.5);
        \draw [omon,red] (4,7) .. controls +(0,-2.5) and +(1,1) .. (2,2.5);
        \draw [omon,red] (2,2.5) .. controls +(0,-2.5) and +(1,1) .. (0,-2.5);
        \draw [omon,red] (0,-7) -- (0,-2.5);
        \node [tmon,red] at (2,2.5) {};
        \draw [omods=red] \mypath;
      \end{scope}
      \draw [bound] (-7,-7) rectangle (7,7);
    \end{tikzpicture}
  \]
  \afterimage

  Likewise we write a $\mT$-module map as a node coated on the right
  by the same film, to suggest the $\mT$-module map law by topological
  deformation.
\end{convention}

\begin{definition}\label{def:em}
  An \emph{algebra object} (a.k.a.\ \emph{Eilenberg-Moore object}) of
  a monad $\mT$ on $\oS$ is the domain $\AlgTS$ of a universal
  $\mT$-module\footnote{A $\mT$-module may also be viewed as a
    \emph{lax cone} to $\mT$, viewing $\mT$ as a functor from the free
    2-category containing a monad. A universal $\mT$-module is then a
    \emph{lax limit} of this functor~\cite{street:limits}.}  (a.k.a.\
  \emph{monadic} 1-cell) $\mcarT$. Explicitly, this means $\mcarT$ is
  a $\mT$-module \vspace{-5pt}
  \[\preq
    \mathclap{\mcarT \colon \AlgTS \to \oS}
    \qqq
    \mathclap{
      \begin{tikzpicture}[yscale=.5]
        \begin{scope}
          \clip [boundc] (-6,-6) rectangle (6,6);
          \fill [bg] (-6,-6) rectangle (6,6);
          \fill [em=red] (-6,-6) rectangle (0,6);
          \draw [oem,red] (0,6) -- (0,-6);
        \end{scope}
        \draw [bound] (-6,-6) rectangle (6,6);
        \node [label] at (-3,0) {$\AlgTS$};
        \node [label] at (3,0) {$\oS$};
        \node [label,red,above] at (0,6) {$\mcarT$};
        \node [label,below] at (0,-6) {$\phantom{\mcarT}$};
      \end{tikzpicture}
    }
  \]
  \vspace{-12.5pt}
  \[\preq
    \mathclap{
      \begin{tikzpicture}
        \node [ob] (s) at (-7,0) {$\AlgTS$};
        \node [ob] (m) at (0,7) {$\oS$};
        \node [ob] (t) at (7,0) {$\oS$};
        \draw [a] (s) -- node[arr,al] {$\mcarT$} (m);
        \draw [a] (m) -- node[arr,ar] {$\mT$} (t);
        \draw [a] (s) -- node[arr,below] {$\mcarT$} (t);
        \node [cell] at (0,2.5) {$\mcarmodT$};
      \end{tikzpicture}
    }
    \qqq
    \mathclap{
      \begin{tikzpicture}
        \begin{scope}
          \clip [boundc] (-6,-6) rectangle (6,6);
          \fill [bg] (-6,-6) rectangle (6,6);
          \renewcommand{\mypath}{(-2.6,6) .. controls +(0,-3.5) and +(-1,1) .. (0,0) -- (0,-6)}
          \fill [em=red] \mypath -- (-6,-6) -- (-6,6) -- cycle;
          \draw [omon,red] (2.6,6) .. controls +(0,-3.5) and +(1,1) .. (0,0);
          \draw [oem,red] \mypath;
        \end{scope}
        \draw [bound] (-6,-6) rectangle (6,6);
      \end{tikzpicture}
    }
  \]
  such that
  \begin{itemize}
  \item
    for any $\mT$-module $\bM$
    \vspace{-10pt}
    \[\preq
      \hspace{-\leftmargin}
      \mathclap{\bM\colon \oX \to \oS}
      \qqq
      \mathclap{
        \begin{tikzpicture}[yscale=.5]
          \begin{scope}
            \clip [boundc] (-6,-6) rectangle (6,6);
            \fill [bg] (-6,-6) rectangle (6,6);
            \fill [bgd] (-6,-6) rectangle (0,6);
            \draw [omods=red] (0,6) -- (0,-6);
          \end{scope}
          \draw [bound] (-6,-6) rectangle (6,6);
          \node [label] at (-3,0) {$\oX$};
          \node [label,above] at (0,6) {$\bM$};
          \node [label,below] at (0,-6) {$\phantom{\bM}$};
        \end{tikzpicture}
      }
    \]
    \vspace{-10pt}
    \[\preq
      \hspace{-\leftmargin}
      \mathclap{
        \begin{tikzpicture}
          \node [ob] (s) at (-7,0) {$\oX$};
          \node [ob] (m) at (0,7) {$\oS$};
          \node [ob] (t) at (7,0) {$\oS$};
          \draw [a] (s) -- node[arr,al] {$\bM$} (m);
          \draw [a] (m) -- node[arr,ar] {$\mT$} (t);
          \draw [a] (s) -- node[arr,below] {$\bM$} (t);
          \node [cell] at (0,2.5) {$\rho$};
        \end{tikzpicture}
      }
      \qqq
      \mathclap{
        \begin{tikzpicture}
          \begin{scope}
            \clip [boundc] (-6,-6) rectangle (6,6);
            \fill [bg] (-6,-6) rectangle (6,6);
            \renewcommand{\mypath}{(-2.6,6) .. controls +(0,-3.5) and +(-1,1) .. (0,0) -- (0,-6)}
            \fill [bgd] \mypath -- (-6,-6) -- (-6,6) -- cycle;
            \draw [omon,red] (2.6,6) .. controls +(0,-3.5) and +(1,1) .. (0,0);
            \draw [omods=red] \mypath;
          \end{scope}
          \draw [bound] (-6,-6) rectangle (6,6);
        \end{tikzpicture}
      }
      \afterimage
    \]
    there is a unique 1-cell $\lift{\bM}$
    \vspace{-10pt}
    \[\preq
      \hspace{-\leftmargin}
      \mathclap{\lift{\bM} \colon \oX \to \AlgTS}
      \qqq
      \mathclap{
        \begin{tikzpicture}[yscale=.5]
          \begin{scope}
            \clip [boundc] (-6,-6) rectangle (6,6);
            \fill [bg] (-6,-6) rectangle (6,6);
            \fill [bgd] (-6,-6) rectangle (0,6);
            \fill [em=red] (0,-6) rectangle (6,6);
            \draw [o] (0,6) -- (0,-6);
          \end{scope}
          \draw [bound] (-6,-6) rectangle (6,6);
          \node [label,above] at (0,6) {$\lift{\bM}$};
          \node [label,below] at (0,-6) {$\phantom{\lift{\bM}}$};
        \end{tikzpicture}
      }
      \vspace{-12.5pt}
    \]
    factoring the $\mT$-module $\bM$ through $\mcarT$
    \[\preq
      \hspace{-\leftmargin}
      \mathclap{
        \begin{tikzpicture}
          \node [ob] (s) at (-7,0) {$\oX$};
          \node [ob] (m) at (0,-7) {$\AlgTS$};
          \node [ob] (t) at (7,0) {$\oS$};
          \draw [a] (s) -- node[arr,bl] {$\lift{\bM}$} (m);
          \draw [a] (m) -- node[arr,br] {$\mcarT$} (t);
          \draw [a] (s) -- node[arr,above] {$\bM$} (t);
          \node at (0,-2.5) {$=$};
        \end{tikzpicture}
      }
      \qqq
      \mathclap{
        \begin{tikzpicture}[yscale=.5]
          \begin{scope}
            \clip [boundc] (-6,-6) rectangle (6,6);
            \fill [bg] (-6,-6) rectangle (6,6);
            \fill [bgd] (-6,-6) rectangle (0,6);
            \draw [omods=red] (0,6) -- (0,-6);
          \end{scope}
          \draw [bound] (-6,-6) rectangle (6,6);
        \end{tikzpicture}
        \eqq
        \begin{tikzpicture}[yscale=.5]
          \begin{scope}
            \clip [boundc] (-8,-6) rectangle (8,6);
            \fill [bg] (-8,-6) rectangle (8,6);
            \fill [bgd] (-8,-6) rectangle (-3,6);
            \fill [em=red] (-3,-6) rectangle (3,6);
            \draw [o] (-3,6) -- (-3,-6);
            \draw [oem,red] (3,6) -- (3,-6);
          \end{scope}
          \draw [bound] (-8,-6) rectangle (8,6);
        \end{tikzpicture}
      }
    \]
    \vspace{-7.5pt}
    \[\preq
      \hspace{-\leftmargin}
      \mathclap{
        \begin{tikzpicture}
          \node [ob] (s) at (-7,0) {$\oX$};
          \node [ob] (m) at (0,7) {$\oS$};
          \node [ob] (t) at (7,0) {$\oS$};
          \draw [a] (s) -- node[arr,al] {$\bM$} (m);
          \draw [a] (m) -- node[arr,ar] {$\mT$} (t);
          \draw [a] (s) -- node[arr,below] {$\bM$} (t);
          \node [cell] at (0,2.5) {$\rho$};
        \end{tikzpicture}
        \eqq
        \begin{tikzpicture}[xscale=.75]
          \node [ob] (s) at (-7,0) {$\AlgTS$};
          \node [ob] (m) at (0,7) {$\oS$};
          \node [ob] (t) at (7,0) {$\oS$};
          \draw [a] (s) -- node[arr,al] {$\mcarT$} (m);
          \draw [a] (m) -- node[arr,ar] {$\mT$} (t);
          \draw [a] (s) -- node[arr,below] {$\mcarT$} (t);
          \node [cell] at (0,2.5) {$\mcarmodT$};
          \node [ob] (ss) at (-21,0) {$\oX$};
          \draw [a] (ss) -- node[arr,above] {$\lift{\bM}$} (s);
        \end{tikzpicture}
      }
      \qqq
      \mathclap{
        \begin{tikzpicture}
          \begin{scope}
            \clip [boundc] (-6,-6) rectangle (6,6);
            \fill [bg] (-6,-6) rectangle (6,6);
            \renewcommand{\mypath}{(-2.6,6) .. controls +(0,-3.5) and +(-1,1) .. (0,0) -- (0,-6)}
            \fill [bgd] \mypath -- (-6,-6) -- (-6,6) -- cycle;
            \draw [omon,red] (2.6,6) .. controls +(0,-3.5) and +(1,1) .. (0,0);
            \draw [omods=red] \mypath;
          \end{scope}
          \draw [bound] (-6,-6) rectangle (6,6);
        \end{tikzpicture}
        \eqq
        \begin{tikzpicture}
          \begin{scope}
            \clip [boundc] (-10,-6) rectangle (6,6);
            \fill [bg] (-10,-6) rectangle (6,6);
            \fill [bgd] (-10,-6) rectangle (-5,6);
            \renewcommand{\mypath}{(-1.6,6) .. controls +(0,-3.5) and +(-1,1) .. (1,0) -- (1,-6)}
            \fill [em=red] \mypath -- (-5,-6) -- (-5,6) -- cycle;
            \draw [omon,red] (3.6,6) .. controls +(0,-3.5) and +(1,1) .. (1,0);
            \draw [oem,red] \mypath;
            \draw [o] (-5,6) -- (-5,-6);
          \end{scope}
          \draw [bound] (-10,-6) rectangle (6,6);
        \end{tikzpicture}
      }
    \]
    
  \item
    for any $\mT$-module map $\phi \colon \bM_1 \Rightarrow \bM_2$
    \[\preq
      \hspace{-\leftmargin}
      \mathclap{
        \begin{tikzpicture}
          \node [ob] (s) at (-7,0) {$\oX$};
          \node [ob] (t) at (7,0) {$\oS$};
          \draw [a] (s) .. controls +(5,3) and +(-5,3) .. node[arr,above] {$\bM_1$} (t);
          \draw [a] (s) .. controls +(5,-3) and +(-5,-3) .. node[arr,below] {$\bM_2$} (t);
          \node [cell] at (0,0) {$\phi$};
        \end{tikzpicture}
      }
      \qqq
      \mathclap{
        \begin{tikzpicture}
          \begin{scope}
            \clip [boundc] (-6,-6) rectangle (6,6);
            \fill [bg] (-6,-6) rectangle (6,6);
            \node [tmhomh,red] at (0,0) {};
            \fill [bgd] (-6,-6) rectangle (0,6);
            \draw [omods=red] (0,6) -- (0,0);
            \draw [omods=red,white] (0,0) -- (0,-6);
            \node [tmhom] at (0,0) {};
          \end{scope}
          \draw [bound] (-6,-6) rectangle (6,6);
          \node [label,left=3.5] at (0,0) {$\phi$};
          \node [label,above] at (0,6) {$\bM_1$};
          \node [label,below] at (0,-6) {$\bM_2$};
        \end{tikzpicture}
      }
    \]
    there is a unique 2-cell $\lift{\phi} \colon \lift{\bM_1}
    \Rightarrow \lift{\bM_2}$
    \vspace{-5pt}
    \[\preq
      \hspace{-\leftmargin}
      \mathclap{
        \begin{tikzpicture}
          \node [ob] (s) at (-7,0) {$\oX$};
          \node [ob] (t) at (7,0) {$\AlgTS$};
          \draw [a] (s) .. controls +(5,3) and +(-5,3) .. node[arr,above] {$\lift{\bM_1}$} (t);
          \draw [a] (s) .. controls +(5,-3) and +(-5,-3) .. node[arr,below] {$\lift{\bM_2}$} (t);
          \node [cell] at (0,0) {$\lift{\phi}$};
        \end{tikzpicture}
      }
      \qqq
      \mathclap{
        \begin{tikzpicture}
          \begin{scope}
            \clip [boundc] (-6,-6) rectangle (6,6);
            \fill [bg] (-6,-6) rectangle (6,6);
            \fill [bgd] (-6,-6) rectangle (0,6);
            \fill [em=red] (0,-6) rectangle (6,6);
            \draw [o] (0,6) -- (0,0);
            \draw [o,white] (0,0) -- (0,-6);
            \node [tmhomi] at (0,0) {};
          \end{scope}
          \draw [bound] (-6,-6) rectangle (6,6);
          \node [label,above] at (0,6) {$\lift{\bM_1}$};
          \node [label,below] at (0,-6) {$\lift{\bM_2}$};
          \node [label,left=3.5] at (0,0) {$\lift{\phi}$};
        \end{tikzpicture}
      }
    \]
    factoring $\phi$ through $\mcarmodT$
    \[\preq
      \hspace{-\leftmargin}
      \mathclap{
        \begin{tikzpicture}
          \node [ob] (s) at (-7,0) {$\oX$};
          \node [ob] (t) at (7,0) {$\oS$};
          \draw [a] (s) .. controls +(5,3) and +(-5,3) .. node[arr,above] {$\bM_1$} (t);
          \draw [a] (s) .. controls +(5,-3) and +(-5,-3) .. node[arr,below] {$\bM_2$} (t);
          \node [cell] at (0,0) {$\phi$};
        \end{tikzpicture}
        \eqq
        \begin{tikzpicture}[xscale=.75]
          \node [ob] (s) at (-7,0) {$\oX$};
          \node [ob] (t) at (7,0) {$\AlgTS$};
          \draw [a] (s) .. controls +(5,3) and +(-5,3) .. node[arr,above] {$\lift{\bM_1}$} (t);
          \draw [a] (s) .. controls +(5,-3) and +(-5,-3) .. node[arr,below] {$\lift{\bM_2}$} (t);
          \node [cell] at (0,0) {$\lift{\phi}$};
          \node [ob] (tt) at (21,0) {$\oS$};
          \draw [a] (t) -- node[arr,above] {$\mcarT$} (tt);
        \end{tikzpicture}
      }
      \qqq
      \mathclap{
        \begin{tikzpicture}
          \begin{scope}
            \clip [boundc] (-6,-6) rectangle (6,6);
            \fill [bg] (-6,-6) rectangle (6,6);
            \node [tmhomh,red] at (0,0) {};
            \fill [bgd] (-6,-6) rectangle (0,6);
            \draw [omods=red] (0,6) -- (0,0);
            \draw [omods=red,white] (0,0) -- (0,-6);
            \node [tmhom] at (0,0) {};
          \end{scope}
          \draw [bound] (-6,-6) rectangle (6,6);
        \end{tikzpicture}
        \eqq
        \begin{tikzpicture}
          \begin{scope}
            \clip [boundc] (-8,-6) rectangle (8,6);
            \fill [bg] (-8,-6) rectangle (8,6);
            \fill [bgd] (-8,-6) rectangle (-3,6);
            \fill [em=red] (-3,-6) rectangle (3,6);
            \draw [o] (-3,6) -- (-3,0);
            \draw [o,white] (-3,0) -- (-3,-6);
            \node [tmhomi] at (-3,0) {};
            \draw [oem,red] (3,6) -- (3,-6);
          \end{scope}
          \draw [bound] (-8,-6) rectangle (8,6);
        \end{tikzpicture}
      }
    \]
  \end{itemize}
\end{definition}

\begin{convention}
  In string diagrams, we write the algebra object $\AlgTS$ as a
  $\mT$-colored region. This is to suggest $\AlgTS$ behaves as an
  interior to the channel of fluid $\mT$. Indeed, the following allows
  us to expand the 1-cells and 2-cells of $\mT$ in string diagrams as
  regions of $\AlgTS$.
\end{convention}

\begin{proposition}\label{prop:termresolve}
  Let $\mT$ be a monad on $\oS$ with algebra object $\AlgTS$.
  \begin{enumerate}[label=(\roman*)]
  \item\label{item:resolveadj} There exists a canonical \emph{left
      adjoint} $\mlcarT$ of $\mcarT$. That is, there is a 1-cell
    \[\preq
      \hspace{-\leftmargin}
      \mathclap{\mlcarT \colon \oS \to \AlgTS}
      \qqq
      \mathclap{

      }
      \afterimage
    \]
    and 2-cells (the counit and unit)
    \[\preq
      \hspace{-\leftmargin}
      \mathclap{
        %
      }
      \afterimage
    \]
    satisfying the adjunction laws (a.k.a.\ snake equations)
    \[\preq
      \hspace{-\leftmargin}
      \mathclap{
        %
      }
      \afterimage
    \]
    
  \item\label{item:resolvemon} This adjunction \emph{resolves the
      monad $\mT$}. That is,\vspace{-7.5pt}
    \[\preq
      \hspace{-\leftmargin}
      \mathclap{
        %
      }
      \afterimage
    \]
    
  \item\label{item:resolveterm} This is a \emph{terminal resolution of
      $\mT$}. That is, for any other adjunction resolving $\mT$\vspace{-5pt}
    \[\preq
      \hspace{-\leftmargin}
      \mathclap{
        \fR \colon \oX \to \oS 
        \qquad
        \fL \colon \oS \to \oX
      }
      \qqq
      \mathclap{
        %
      }
    \]
  \end{enumerate}
\end{proposition}

\begin{proof}
  First observe that $\mT$ is itself a $\mT$-module with structure
  2-cell $\rho$ given by the multiplication $\mu$. Thus the universal
  property of $\mcarT$ yields a unique 1-cell\vspace{-2.5pt}
  \[\preq
    \hspace{-\leftmargin}
    \mathclap{\mlcarT \colon \oS \to \AlgTS} 
    \qqq
    \mathclap{
      %
    }
    \afterimage
    \vspace{-2.5pt}
  \]
  
  Next, observe that $\mcarT \then \mT$ is a $\mT$-module (as is more
  generally the composition of any 1-cell --- here $\mcarT$ --- on the
  left of a $\mT$-module --- here $\mT$). Moreover, observe that
  $\mcarmodT \colon \mcarT \then \mT \Rightarrow \mcarT$ is a
  $\mT$-module map (as is more generally the structure 2-cell of any
  $\mT$-module --- this is the associativity law). Thus the universal
  property of $\mcarT$ yields a unique 2-cell\vspace{-2.5pt}
  \[\preq
    \hspace{-\leftmargin}
    \mathclap{
      %
    }
    \vspace{-2.5pt}
  \]

  \noindent
  Composing $\mlcarT$ on the left yields the equation
  \[\preq
    \hspace{-\leftmargin}
    \mathclap{
      \mathllap{
        %
    }
  \]
  
  Thus we have the desired cells of \labelcref{item:resolveadj} and
  the desired equations of \labelcref{item:resolvemon}. The first
  snake equation of \labelcref{item:resolveadj} is given by the module
  unit law\vspace{-2.25pt}
  \[\preq
    \mathclap{
      %
    }
  \]

  \noindent
  On the other hand, composing $\mcarT$ on the right of the second
  snake equation yields the monad unit law
  \vspace{-2.25pt}
  \[\preq
    \mathclap{
      %
    }
  \]
  and $\mcarT$ may be cancelled on the right of 2-cells (i.e.\
  $\mcarT$ is \emph{faithful}) because of its universal property that
  module maps factor through it uniquely.

  \pagebreak

  For \labelcref{item:resolveterm}, observe that given any adjunction
  resolving the monad $\mT$, the right adjoint $\fR$ is a $\mT$-module
  with the structure 2-cell defined as
  \[\preq
    \mathclap{
      %
    }
  \]
  Thus the universal property of $\mcarT$ yields a unique 1-cell
  \[\preq
    \hspace{-\leftmargin}
    \mathclap{\lift{\fR} \colon \oS \to \AlgTS}
    \qqq
    \mathclap{
      %
    }
  \]
  Since the adjunction resolves $\mT$ we have
  \[\preq
    \mathclap{
      %
    }
  \]
  and thus by uniqueness of factorings of modules through $\mcarT$ we
  have
  \[\preq
    \hspace{-\leftmargin}
    \mathclap{
      %
    }
  \]
  as desired.
\end{proof}

\pagebreak
\chapter{Morphisms}\label{sec:morphisms}

There are also notions of morphism between monads on different
objects, as well as higher morphisms between these morphisms.

\begin{definition}\label{def:monadone}
  A \emph{monad lax 1-cell}\footnote{The definitions of monad lax
    1-cell, monad 2-cell, and monad specialization make sense in an
    \emph{implicit 2-category}~\cite{fairbanks-shulman}, and using
    paths of 1-cells rather than individual 1-cells. This is
    essentially just to say that the definitions do not involve
    equations between 1-cells.}
  between monads $\mT_1$ on $\oS_1$ and
  $\mT_2$ on $\oS_2$ consists of a 1-cell
  \vspace{-5pt}
  \[\preq
    \mathclap{\fF \colon \oS_1 \to \oS_2}
    \qqq
    \mathclap{
      \begin{tikzpicture}[yscale=.5]
        \begin{scope}
          \clip [boundc] (-6,-6) rectangle (6,6);
          \fill [bg] (-6,-6) rectangle (6,6);
          \fill [bgd] (0,-6) rectangle (6,6);
          \draw [olax,purp] (0,7) -- (0,-9);
        \end{scope}
        \draw [bound] (-6,-6) rectangle (6,6);
        \node [label] at (-3,0) {$\oS_1$};
        \node [label] at (3,0) {$\oS_2$};
        \node [label,purp,above] at (0,6) {$\fF$};
        \node [label,below] at (0,-6) {$\phantom{\fF}$};
      \end{tikzpicture}
    }
    \vspace{-5pt}
  \]
  and a 2-cell
  $\chi \colon \fF \then \mT_2 \Rightarrow \mT_1 \then \fF$
  \vspace{-5pt}
  \[\preq
    \mathclap{
      \begin{tikzpicture}[yscale=.875,xscale=.875]
        \node [ob] (s) at (-7,0) {$\oS_1$};
        \node [ob] (t) at (7,0) {$\oS_2$};
        \node [ob] (u) at (0,7) {$\oS_2$};
        \node [ob] (d) at (0,-7) {$\oS_1$};
        \draw [a] (s) -- node[arr,al] {$\fF$} (u);
        \draw [a] (u) -- node[arr,ar] {$\mT_2$} (t);
        \draw [a] (s) -- node[arr,bl] {$\mT_1$} (d);
        \draw [a] (d) -- node[arr,br] {$\fF$} (t);
        \node [cell] at (0,0) {$\chi$};
      \end{tikzpicture}
    }
    \qqq
    \mathclap{
      \begin{tikzpicture}
        \begin{scope}
          \clip [boundc] (-6,-6) rectangle (6,6);
          \fill [bg] (-6,-6) rectangle (6,6);
          \renewcommand{\mypath}{(-3,6) .. controls +(0,-3) and +(-1,1) .. (0,0) .. controls +(1,-1) and +(0,3) .. (3,-6)};
          \fill [bgd] \mypath -- (6,-6) -- (6,6) -- cycle;
          \draw [omon,blue] (3,6) .. controls +(0,-3) and +(1,1) .. (0,0);
          \draw [omon,red] (0,0) .. controls +(-1,-1) and +(0,3) .. (-3,-6);
          \draw [olax,purp] (-3,7) -- \mypath -- (3,-8);
        \end{scope}
        \draw [bound] (-6,-6) rectangle (6,6);
        \node [label,blue,above] at (3,6) {$\mT_2$};
        \node [label,red,below] at (-3,-6) {$\mT_1$};
      \end{tikzpicture}
    }
    \vspace{-7.5pt}
  \]
  satisfying
  
  \[\preq
    \mathclap{
      \begin{tikzpicture}[scale=.825]
        \node [ob] (s) at (-7,0) {$\oS_1$};
        \node [ob] (t) at (10.5,3.5) {$\oS_2$};
        \node [ob] (u) at (0,7) {$\oS_2$};
        \node [ob] (d) at (3.5,-3.5) {$\oS_1$};
        \node [ob] (tt) at (14,-7) {$\oS_2$};
        \node [ob] (dd) at (7,-14) {$\oS_1$};
        \draw [a] (s) -- node[arr,al] {$\fF$} (u);
        \draw [a] (u) -- node[arr,above] {$\mT_2$} (t);
        \draw [a] (s) -- node[arr,below,pos=.3] {$\mT_1$} (d);
        \draw [a] (d) -- node[arr,br] {$\fF$} (t);
        \draw [a] (t) -- node[arr,right] {$\mT_2$} (tt);
        \draw [a] (d) -- node[arr,left,pos=.7] {$\mT_1$} (dd);
        \draw [a] (dd) -- node[arr,br] {$\fF$} (tt);
        \draw [a] (s) .. controls +(2,-8) and +(-8,2) .. node[arr,bl] {$\mT_1$} (dd);
        \node [cell] at (1.75,1.75) {$\chi$};
        \node [cell] at (8.75,-5.25) {$\chi$};
        \node [cell] at (0,-7) {$\mu$};
      \end{tikzpicture}
      \eqq
      \begin{tikzpicture}[scale=.825]
        \node [ob] (s) at (-7,0) {$\oS_1$};
        \node [ob] (t) at (10.5,3.5) {$\oS_2$};
        \node [ob] (u) at (0,7) {$\oS_2$};
        \node [ob] (tt) at (14,-7) {$\oS_2$};
        \node [ob] (dd) at (7,-14) {$\oS_1$};
        \draw [a] (s) -- node[arr,al] {$\fF$} (u);
        \draw [a] (u) -- node[arr,above] {$\mT_2$} (t);
        \draw [a] (t) -- node[arr,right] {$\mT_2$} (tt);
        \draw [a] (dd) -- node[arr,br] {$\fF$} (tt);
        \draw [a] (u) .. controls +(2,-8) and +(-8,2) .. node[arr,bl,pos=.8] {$\mT_2$} (tt);
        \draw [a] (s) .. controls +(2,-8) and +(-8,2) .. node[arr,bl] {$\mT_1$} (dd);
        \node [cell] at (.5,-5) {$\chi$};
        \node [cell] at (7,0) {$\mu$};
      \end{tikzpicture}
    }
    \qqq
    \mathclap{
      \begin{tikzpicture}[xscale=.7894,yscale=.7]
        \begin{scope}
          \clip [boundc] (-10,-14) rectangle (9,10);
          \fill [bg] (-10,-14) rectangle (9,10);
          \renewcommand{\mypath}{(-5,10) -- (-5,9) .. controls +(0,-3) and +(-1,1) .. (-2,3) -- (1,0) .. controls +(2,-2) and +(0,3) .. (4,-8) -- (4,-14)};
          \fill [bgd] \mypath -- (9,-14) -- (9,10) -- cycle;
          \draw [omon,blue] (1,10) -- (1,9) .. controls +(0,-3) and +(1,1) .. (-2,3);
          \draw [omon,blue] (5.5,10) -- (5.5,9) .. controls +(0,-5) and +(1,1) .. (1,0);
          \draw [omon,red] (-2,3) .. controls +(-1,-1) and +(0,5) .. (-6.5,-6) .. controls +(0,-3) and +(-1,1) .. (-4.25,-11);
          \draw [omon,red] (1,0) .. controls +(-1,-1) and +(0,3) .. (-2,-6) .. controls +(0,-3) and +(1,1) .. (-4.25,-11);
          \draw [omon,red] (-4.25,-11) -- (-4.25,-14);
          \node [tmon,red] at (-4.25,-11) {};
          \draw [olax,purp] (-5,11) -- \mypath -- (5,-16);
        \end{scope}
        \draw [bound] (-10,-14) rectangle (9,10);
      \end{tikzpicture}
      \eqq
      \begin{tikzpicture}[yscale=.7]
        \begin{scope}
          \clip [boundc] (-7.5,-13) rectangle (7.5,11);
          \fill [bg] (-7.5,-14) rectangle (7.5,11);
          \renewcommand{\mypath}{(-3.5,11) -- (-3.5,5) .. controls +(0,-5) and +(-1,1) .. (0,-4.5) .. controls +(1,-1) and +(0,5) .. (3.5,-13) -- (3.5,-14)};
          \fill [bgd] \mypath -- (7.5,-14) -- (7.5,11) -- cycle;
          \draw [omon,blue] (1,11) -- (1,9) .. controls +(0,-3.5) and +(-1,1) .. (2.75,2);
          \draw [omon,blue] (4.5,11) --(4.5,9) .. controls +(0,-3.5) and +(1,1) .. (2.75,2);
          \draw [omon,blue] (2.75,2) .. controls +(0,-2.5) and +(1,1) .. (0,-4.5);
          \node [tmon,blue] at (2.75,2) {};
          \draw [omon,red] (0,-4.5) .. controls +(-1,-1) and +(0,4) .. (-3.5,-13) -- (-3.5,-14);
          \draw [olax,purp] (-3.5,12) -- \mypath -- (3.5,-16);
        \end{scope}
        \draw [bound] (-7.5,-13) rectangle (7.5,11);
      \end{tikzpicture}
    }
  \]
  \vspace{-10pt}
  \[\preq
    \mathclap{
      \begin{tikzpicture}[scale=.825]
        \node [ob] (s) at (-7,0) {$\oS_1$};
        \node [ob] (u) at (0,7) {$\oS_2$};
        \node [ob] at (0,7) {$\phantom{\oS_2}$};
        \node [ob] (tt) at (14,-7) {$\oS_2$};
        \node [ob] (dd) at (7,-14) {$\oS_1$};
        \draw [a] (s) -- node[arr,al] {$\fF$} (u);
        \draw [a] (dd) -- node[arr,br] {$\fF$} (tt);
        \draw [a] (s) .. controls +(2,-8) and +(-8,2) .. node[arr,bl] {$\mT_1$} (dd);
        \draw [eq] (u) .. controls +(8,-2) and +(-2,8) .. (tt);
        \draw [eq] (s) .. controls +(8,-2) and +(-2,8) .. (dd);
        \node [cell] at (5.75,-1.25) {$=$};
        \node [cell] at (-.25,-7.25) {$\eta$};
      \end{tikzpicture}
      \eqq
      \begin{tikzpicture}[scale=.825]
        \node [ob] (s) at (-7,0) {$\oS_1$};
        \node [ob] (u) at (0,7) {$\oS_2$};
        \node [ob] (tt) at (14,-7) {$\oS_2$};
        \node [ob] (dd) at (7,-14) {$\oS_1$};
        \draw [a] (s) -- node[arr,al] {$\fF$} (u);
        \draw [a] (dd) -- node[arr,br] {$\fF$} (tt);
        \draw [a] (u) .. controls +(2,-8) and +(-8,2) .. node[arr,bl,pos=.8] {$\mT_2$} (tt);
        \draw [eq] (u) .. controls +(8,-2) and +(-2,8) .. (tt);
        \draw [a] (s) .. controls +(2,-8) and +(-8,2) .. node[arr,bl] {$\mT_1$} (dd);
        \node [cell] at (.5,-5) {$\chi$};
        \node [cell] at (6.75,-.25) {$\eta$};
      \end{tikzpicture}
    }
    \qqq
    \mathclap{
      \begin{tikzpicture}[yscale=.7]
        \begin{scope}
          \clip [boundc] (-7.5,-13) rectangle (7.5,8);
          \fill [bg] (-7.5,-14) rectangle (7.5,11);
          \renewcommand{\mypath}{(-3.5,11) -- (-3.5,8) .. controls +(0,-5) and +(-1,1) .. (0,-.5) .. controls +(1,-1) and +(0,5) .. (3.5,-9.5) -- (3.5,-14)};
          \fill [bgd] \mypath -- (7.5,-14) -- (7.5,11) -- cycle;
          \draw [omon,red] (-3.25,-7.5) -- (-3.25,-14);
          \node [tmon,red] at (-3.25,-7.5) {};
          \draw [olax,purp] (-3.5,12) -- \mypath -- (3.5,-16);
        \end{scope}
        \draw [bound] (-7.5,-13) rectangle (7.5,8);
      \end{tikzpicture}
      \eqq
      \begin{tikzpicture}[yscale=.7]
        \begin{scope}
          \clip [boundc] (-7.5,-13) rectangle (7.5,8);
          \fill [bg] (-7.5,-14) rectangle (7.5,11);
          \renewcommand{\mypath}{(-3.5,11) -- (-3.5,5) .. controls +(0,-5) and +(-1,1) .. (0,-4.5) .. controls +(1,-1) and +(0,5) .. (3.5,-13) -- (3.5,-14)};
          \fill [bgd] \mypath -- (7.5,-14) -- (7.5,11) -- cycle;
          \draw [omon,blue] (2.75,2) .. controls +(0,-2.5) and +(1,1) .. (0,-4.5);
          \node [tmon,blue] at (2.75,2) {};
          \draw [omon,red] (0,-4.5) .. controls +(-1,-1) and +(0,4) .. (-3.5,-13) -- (-3.5,-14);
          \draw [olax,purp] (-3.5,12) -- \mypath -- (3.5,-16);
        \end{scope}
        \draw [bound] (-7.5,-13) rectangle (7.5,8);
      \end{tikzpicture}
    }
  \]
  \afterimage

  \noindent
  There are two kinds of morphisms between monad lax 1-cells. The
  first is from~\cite{street:monads}, and the second is from~\cite{wood:ii} and
  \cite{lack-street}.

  A \emph{monad 2-cell} between monad lax 1-cells
  $\fF_1, \fF_2 \colon \mT_1 \to \mT_2$ is a 2-cell
  $\gamma \colon \fF_1 \Rightarrow \fF_2$
  \vspace{-7.5pt}
  \[\preq
    \mathclap{
      \begin{tikzpicture}
        \node [ob] (s) at (-7,0) {$\oS_1$};
        \node [ob] (t) at (7,0) {$\oS_2$};
        \draw [a] (s) .. controls +(5,3) and +(-5,3) .. node[arr,above] {$\fF_1$} (t);
        \draw [a] (s) .. controls +(5,-3) and +(-5,-3) .. node[arr,below] {$\fF_2$} (t);
        \node [cell] at (0,0) {$\gamma$};
      \end{tikzpicture}
    }
    \qqq
    \mathclap{
      \begin{tikzpicture}
        \begin{scope}
          \clip [boundc] (-6,-6) rectangle (6,6);
          \fill [bg] (-6,-6) rectangle (6,6);
          \fill [bgd] (0,-6) rectangle (6,6);
          \draw [olax,purp] (0,7) -- (0,0);
          \draw [olax,dpurp] (0,0) -- (0,-8);
          \node [tmodi] at (0,0) {};
        \end{scope}
        \draw [bound] (-6,-6) rectangle (6,6);
        \node [label,purp,above] at (0,6) {$\fF_1$};
        \node [label,dpurp,below] at (0,-6) {$\fF_2$};
        \node [label,left=4] at (0,0) {$\gamma$};
      \end{tikzpicture}
    }
    \vspace{-10pt}
  \]
  satisfying
  
  \[\preq
    \mathclap{
      \begin{tikzpicture}[xscale=.765625,yscale=.875]
        \path (0,10) -- (0,-10);
        \node [ob] (s) at (-10,0) {$\oS_1$};
        \node [ob] (t) at (10,0) {$\oS_2$};
        \node [ob] (u) at (0,7) {$\oS_2$};
        \node [ob] (d) at (0,-7) {$\oS_1$};
        \draw [a] (s) -- node[arr,brcl,pos=.33] {$\fF_2$} (u);
        \draw [a] (u) -- node[arr,ar] {$\mT_2$} (t);
        \draw [a] (s) -- node[arr,bl] {$\mT_1$} (d);
        \draw [a] (d) -- node[arr,br] {$\fF_2$} (t);
        \draw [a] (s) .. controls +(-1,6) and +(-6,2) .. node[arr,al] {$\fF_1$} (u);
        \node [cell] at (.5,-.5) {$\chi$};
        \node [cell] at (-6.5,5) {$\gamma$};
      \end{tikzpicture}
      \eqq
      \begin{tikzpicture}[xscale=.765625,yscale=.875]
        \path (0,10) -- (0,-10);
        \node [ob] (s) at (-10,0) {$\oS_1$};
        \node [ob] (t) at (10,0) {$\oS_2$};
        \node [ob] (u) at (0,7) {$\oS_2$};
        \node [ob] (d) at (0,-7) {$\oS_1$};
        \draw [a] (s) -- node[arr,al] {$\fF_1$} (u);
        \draw [a] (u) -- node[arr,ar] {$\mT_2$} (t);
        \draw [a] (s) -- node[arr,bl] {$\mT_1$} (d);
        \draw [a] (d) -- node[arr,alcl,pos=.66] {$\fF_1$} (t);
        \draw [a] (d) .. controls +(6,-2) and +(1,-6) .. node[arr,br] {$\fF_2$} (t);
        \node [cell] at (-.5,.5) {$\chi$};
        \node [cell] at (6.75,-5.5) {$\gamma$};
      \end{tikzpicture}
    }
    \qqq
    \mathclap{
      \begin{tikzpicture}[xscale=.875,yscale=.825]
        \begin{scope}
          \clip [boundc] (-7.5,-6) rectangle (7.5,10);
          \fill [bg] (-7.5,-6) rectangle (7.5,10);
          \renewcommand{\mypatha}{(-3,10) -- (-3,6)};
          \renewcommand{\mypathb}{(-3,6) .. controls +(0,-3) and +(-1,1) .. (0,0) .. controls +(1,-1) and +(0,3) .. (3,-7)};
          \fill [bgd] \mypatha -- \mypathb -- (7.5,-6) -- (7.5,10) -- cycle;
          \draw [omon,blue] (3,10) -- (3,6) .. controls +(0,-3) and +(1,1) .. (0,0);
          \draw [omon,red] (0,0) .. controls +(-1,-1) and +(0,3) .. (-3,-6);
          \draw [olax,purp] (-3,11) -- \mypatha;
          \draw [olax,dpurp] \mypathb -- (3,-8);
          \node [tmodi] at (-3,6) {};
        \end{scope}
        \draw [bound] (-7.5,-6) rectangle (7.5,10);
      \end{tikzpicture}
      \eqq
      \begin{tikzpicture}[xscale=.875,yscale=.825]
        \begin{scope}
          \clip [boundc] (-7.5,-10) rectangle (7.5,6);
          \fill [bg] (-7.5,-10) rectangle (7.5,6);
          \renewcommand{\mypatha}{(-3,7.5) .. controls +(0,-3) and +(-1,1) .. (0,0) .. controls +(1,-1) and +(0,3) .. (3,-6.5)};
          \renewcommand{\mypathb}{(3,-6) -- (3,-10)};
          \fill [bgd] \mypatha -- \mypathb -- (7.5,-10) -- (7.5,6) -- cycle;
          \draw [omon,blue] (3,6) .. controls +(0,-3) and +(1,1) .. (0,0);
          \draw [omon,red] (0,0) .. controls +(-1,-1) and +(0,3) .. (-3,-6) -- (-3,-10);
          \draw [olax,purp] (-3,8.5) -- \mypatha;
          \draw [olax,dpurp] \mypathb -- (3,-12);
          \node [tmodi] at (3,-6) {};
        \end{scope}
        \draw [bound] (-7.5,-10) rectangle (7.5,6);
      \end{tikzpicture}
    }
  \]
  \afterimage

  \noindent
  The other kind of morphism between monad lax 1-cells was not
  explicitly given its own name in~\cite{wood:ii}
  or~\cite{lack-street}, so here we call it by its application
  in~\cite{garner:ionads,fairbanks-carlson-spivak}.
  
  A \emph{monad specialization} between
  $\fF_1, \fF_2 \colon \mT_1 \to \mT_2$ is a 2-cell
  $\sigma \colon \fF_1 \Rightarrow \mT_1 \then \fF_2$\vspace{-2.5pt}
  \[\preq
    \mathclap{
      \begin{tikzpicture}
        \node [ob] (s) at (-7,0) {$\oS_1$};
        \node [ob] (m) at (0,-7) {$\oS_1$};
        \node [ob] (t) at (7,0) {$\oS_2$};
        \draw [a] (s) -- node[arr,above] {$\fF_1$} (t);
        \draw [a] (s) -- node[arr,bl] {$\mT_1$} (m);
        \draw [a] (m) -- node[arr,br] {$\fF_2$} (t);
        \node [cell] at (0,-2.5) {$\sigma$};
      \end{tikzpicture}
    }
    \qqq
    \mathclap{
      \begin{tikzpicture}
        \begin{scope}
          \clip [boundc] (-6,-6) rectangle (6,6);
          \fill [bg] (-6,-6) rectangle (6,6);
          \begin{scope}
            \clip (1.5,6) -- (1.5,-6) -- (6,-6) -- (6,6) -- cycle;
            \fill [bgd] (-6,-6) rectangle (6,6);
            \node [tspech,blue] at (1.5,0) {};
          \end{scope}
          \begin{scope}
            \clip (1.5,6) -- (1.5,-6) -- (-6,-6) -- (-6,6) -- cycle;
            \node [tspech,red] at (1.5,0) {};
          \end{scope}
          \draw [omon,red] (1.5,0) .. controls +(-2.5,-1) and +(0,3) .. (-3,-7);
          \draw [olax,purp] (1.5,7) -- (1.5,0);
          \draw [olax,dpurp] (1.5,0) -- (1.5,-8);
          \node [tspec] at (1.5,0) {};
        \end{scope}
        \draw [bound] (-6,-6) rectangle (6,6);
        \node [label,above=4.5,left=5] at (1.5,0) {$\sigma$};
      \end{tikzpicture}
    }\vspace{-5pt}
  \]

  \noindent
  satisfying\vspace{-2.5pt}
  \[\preq
    \mathclap{
      \begin{tikzpicture}[xscale=.765625,scale=.875]
        \node [ob] (ss) at (-20,7) {$\oS_1$};
        \node [ob] (s) at (-10,0) {$\oS_1$};
        \node [ob] (t) at (10,0) {$\oS_2$};
        \node [ob] (u) at (0,7) {$\oS_2$};
        \node [ob] (d) at (0,-7) {$\oS_1$};
        \draw [a] (ss) -- node[arr,blcl] {$\mT_1$} (s);
        \draw [a] (s) -- node[arr,brcl,pos=.15] {$\fF_2$} (u);
        \draw [a] (u) -- node[arr,ar] {$\mT_2$} (t);
        \draw [a] (s) -- node[arr,blcl] {$\mT_1$} (d);
        \draw [a] (d) -- node[arr,br] {$\fF_2$} (t);
        \draw [a] (ss) -- node[arr,above] {$\fF_1$} (u);
        \draw [a] (ss) .. controls +(-3,-11) and +(-11,-3) .. node[arr,bl] {$\mT_1$} (d);
        \node [cell] at (.5,0) {$\chi$};
        \node [cell] at (-10,4.25) {$\sigma$};
        \node [cell] at (-13.75,-2.625) {$\mu$};
      \end{tikzpicture}
      \eqq
      \begin{tikzpicture}[xscale=.765625,scale=.875]
        \node [ob] (ss) at (-20,7) {$\oS_1$};
        \node [ob] (s) at (-10,0) {$\oS_1$};
        \node [ob] (t) at (10,0) {$\oS_2$};
        \node [ob] (u) at (0,7) {$\oS_2$};
        \node [ob] (d) at (0,-7) {$\oS_1$};
        \draw [a] (ss) -- node[arr,blcl] {$\mT_1$} (s);
        \draw [a] (s) -- node[arr,above] {$\fF_1$} (t);
        \draw [a] (u) -- node[arr,ar] {$\mT_2$} (t);
        \draw [a] (s) -- node[arr,blcl] {$\mT_1$} (d);
        \draw [a] (d) -- node[arr,br] {$\fF_2$} (t);
        \draw [a] (ss) -- node[arr,above] {$\fF_1$} (u);
        \draw [a] (ss) .. controls +(-3,-11) and +(-11,-3) .. node[arr,bl] {$\mT_1$} (d);
        \node [cell] at (0,-2.75) {$\sigma$};
        \node [cell] at (-6.5,3.75) {$\chi$};
        \node [cell] at (-13.75,-2.625) {$\mu$};
      \end{tikzpicture}
    }
    \qqq
    \mathclap{
      \begin{tikzpicture}[xscale=.875,yscale=.825]
        \begin{scope}
          \clip [boundc] (-9,-10) rectangle (7.5,10);
          \fill [bg] (-9,-10) rectangle (7.5,10);
          \coordinate (mypoint) at (-3,6);
          \renewcommand{\mypath}{(3,10) -- (3,7) .. controls +(0,-8) and +(1,1) .. (-3,-7)}
          \renewcommand{\mypatha}{(-3,10) -- (mypoint)};
          \renewcommand{\mypathb}{(mypoint) .. controls +(0,-8)  and +(0,8) .. (3,-9) -- (3,-12)};
          \begin{scope}
            \clip \mypatha -- \mypathb -- (7.5,-10) -- (7.5,10) -- cycle;
            \fill [bgd] (-9,-10) rectangle (7.5,10);
            \node [tspech,blue] at (mypoint) {};
            \draw [omon,blue] \mypath;
          \end{scope}
          \begin{scope}
            \clip \mypatha -- \mypathb -- (-9,-10) -- (-9,10) -- cycle;
            \node [tspech,red] at (mypoint) {};
            \draw [omon,red] \mypath;
            \draw [omon,red] (-3,-7) -- (-3,-10);
            \draw [omon,red] (mypoint) .. controls +(-5,-2) and +(-4,3) .. (-3,-7);
            \node [tmon,red] at (-3,-7) {};
          \end{scope}
          \draw [olax,purp] (-3,11) -- \mypatha;
          \draw [olax,dpurp] \mypathb -- (3,-12);
          \node [tspec] at (mypoint) {};
        \end{scope}
        \draw [bound] (-9,-10) rectangle (7.5,10);
      \end{tikzpicture}
      \eqq
      \begin{tikzpicture}[xscale=.875,yscale=.825]
        \begin{scope}
          \clip [boundc] (-9,-10) rectangle (7.5,10);
          \fill [bg] (-9,-10) rectangle (7.5,10);
          \coordinate (mypoint) at (3,-2);
          \renewcommand{\mypath}{(3,10) .. controls +(0,-5) and +(0,6) .. (-5,-2) .. controls +(0,-2) and +(-1,1) .. (-3,-7)}
          \renewcommand{\mypatha}{(-3,10) .. controls +(0,-5) and +(0,9) .. (3,-2.5)};
          \renewcommand{\mypathb}{(mypoint) -- (3,-12)};
          \begin{scope}
            \clip \mypatha -- \mypathb -- (7.5,-10) -- (7.5,10) -- cycle;
            \fill [bgd] (-9,-10) rectangle (7.5,10);
            \node [tspech,blue] at (mypoint) {};
            \draw [omon,blue] \mypath;
          \end{scope}
          \begin{scope}
            \clip \mypatha -- \mypathb -- (-9,-10) -- (-9,10) -- cycle;
            \node [tspech,red] at (mypoint) {};
            \draw [omon,red] \mypath;
            \draw [omon,red] (-3,-7) -- (-3,-10)(0,4);
            \draw [omon,red] (mypoint) .. controls +(-2.5,-1) and +(1,1) .. (-3,-7);
            \node [tmon,red] at (-3,-7) {};
          \end{scope}
          \draw [olax,purp] (-3,11) -- \mypatha;
          \draw [olax,dpurp] \mypathb -- (3,-12);
          \node [tspec] at (mypoint) {};
        \end{scope}
        \draw [bound] (-9,-10) rectangle (7.5,10);
      \end{tikzpicture}
    }
  \]
  \vspace{-15pt}
\end{definition}

\begin{convention}\label{rem:sliding}
  In string diagrams, a monad lax 1-cell $\fF$ is a sliding membrane,
  transforming between $\oS_1$ and $\mT_1$ on its left and $\oS_2$ and
  $\mT_2$ on its right, capable of crossing the monad fluid through
  the NW/SE direction. We write $\fF$ with a texture indicating the
  direction monads progress downward through it. A monad 2-cell
  $\gamma$ is a node between lax 1-cells that likewise can freely
  slide across monads. A monad specialization $\sigma$ is similar to a
  monad 2-cell, except it is itself stuck in monad fluid and cannot
  come out.
\end{convention}

Given a 2-category $\tX$, we write the 2-category of monads, monad lax
1-cells, and monad 2-cells as $\Mnd(\tX)$, and we write the 2-category
of monads, monad lax 1-cells, and monad specializations as $\EM(\tX)$
following~\cite{lack-street}.\footnote{This 2-category is called
  $\EM(\tX)$ because it is the free completion of the 2-category $\tX$
  under algebra objects, a.k.a.\ Eilenberg-Moore
  objects. See~\cite{lack-street} for details.} That these form
2-categories is straightforward; we also cover this later in the more
general \cref{prop:doublemonads}.

\begin{remark}\label{rem:tospec}
  There is an identity-on-1-cells functor $\Mnd(\tX) \to \EM(\tX)$
  given by \vspace{-2.5pt}
  \[\preq
    \mathclap{
      \begin{tikzpicture}
        \node [ob] (s) at (-7,0) {$\oS_1$};
        \node [ob] (t) at (7,0) {$\oS_2$};
        \draw [a] (s) .. controls +(5,3) and +(-5,3) .. node[arr,above] {$\fF_1$} (t);
        \draw [a] (s) .. controls +(5,-3) and +(-5,-3) .. node[arr,below] {$\fF_2$} (t);
        \node [cell] at (0,0) {$\gamma$};
      \end{tikzpicture}
      \eqquad\mapsto\eqquad
      \begin{tikzpicture}[xscale=.625,yscale=1.125]
        \node [ob] (s) at (-7,0) {$\oS_1$};
        \node [ob] (t) at (7,0) {$\oS_2$};
        \node [ob] (ss) at (-21,0) {$\oS_1$};
        \draw [eq] (ss) .. controls +(5,3) and +(-5,3) .. (s);
        \draw [a] (ss) .. controls +(5,-3) and +(-5,-3) .. node[arr,below] {$\mT_1$} (s);
        \draw [a] (s) .. controls +(5,3) and +(-5,3) .. node[arr,above] {$\fF_1$} (t);
        \draw [a] (s) .. controls +(5,-3) and +(-5,-3) .. node[arr,below] {$\fF_2$} (t);
        \node [cell] at (-14,0) {$\eta$};
        \node [cell] at (0,0) {$\gamma$};
      \end{tikzpicture}
    }
    \qqq
    \mathclap{
      \begin{tikzpicture}
        \begin{scope}
          \clip [boundc] (-6,-6) rectangle (6,6);
          \fill [bg] (-6,-6) rectangle (6,6);
          \fill [bgd] (0,-6) rectangle (6,6);
          \draw [olax,purp] (0,7) -- (0,0);
          \draw [olax,dpurp] (0,0) -- (0,-8);
          \node [tmodi] at (0,0) {};
        \end{scope}
        \draw [bound] (-6,-6) rectangle (6,6);
      \end{tikzpicture}
      \eqquad\mapsto\eqquad
      \begin{tikzpicture}
        \begin{scope}
          \clip [boundc] (-6,-6) rectangle (6,6);
          \fill [bg] (-6,-6) rectangle (6,6);
          \fill [bgd] (1.5,-6) rectangle (6,6);
          \draw [olax,purp] (1.5,7) -- (1.5,0);
          \draw [olax,dpurp] (1.5,0) -- (1.5,-8);
          \node [tmodi] at (1.5,0) {};
          \draw [omon,red] (-2.5,-2.5) -- (-2.5,-7);
          \node [tmon,red] at (-2.5,-2.5) {};
        \end{scope}
        \draw [bound] (-6,-6) rectangle (6,6);
      \end{tikzpicture}
    }
    \vspace{-2.5pt}
  \]
  
  \noindent
  In fact, an arbitrary 2-cell $\gamma \colon \fF_1 \Rightarrow \fF_2$
  is a monad 2-cell if and only if its composite with the unit $\eta$,
  as shown above, is a monad specialization. Thus monad 2-cells are
  equivalently monad specializations equipped with the data of a
  factoring through the unit.
\end{remark}

\begin{remark}\label{rem:transfors}
  The cells of $\Mnd(\tX)$ are instances of the 2-category-theoretic
  notions of functor, lax transformation, and
  modification. Specifically, let $\tDelta$ denote the free 2-category
  containing a monad on an object. A monad in $\tX$ is equivalently a
  functor of 2-categories from $\tDelta$ to $\tX$, a monad lax 1-cell
  is equivalently a lax transformation between the corresponding
  functors, and a monad 2-cell is equivalently a modification between
  the corresponding lax transformations.

  In string diagrams, functors, transformations (lax, colax, or
  neither), and modifications between 2-categories $\tcat{C}$ and
  $\tcat{D}$ are represented as regions, strings, and nodes that may
  be superimposed on string diagrams in $\tcat{C}$ (with strings
  crossing in an appropriate direction) to yield string diagrams in
  $\tcat{D}$. In particular, monad lax 1-cells and monad 2-cells are
  represented as strings and nodes which may slide across the monad
  diagrams, as described in \cref{rem:sliding}. See
  \cite{morehouse:two} or \cite[Appendix A]{fairbanks-shulman} for
  some explanation of how this graphical language arises.

  A monad in a 2-category may also be viewed as a lax
  functor\footnote{A lax functor of 2-categories is the same as a
    functor between their underlying virtual 2-categories. The notions
    of lax transformation and modification make sense not only for lax
    functors between 2-categories, but more generally for functors
    from a virtual 2-category to a 2-category (or implicit
    2-category).} from the terminal 2-category $\tcat{1}$. The cells
  of $\Mnd(\tX)$ are then equally well viewed as such lax functors
  from $\tcat{1}$, lax transformations between them, and modifications
  between them. The 2-cells of $\EM(\tX)$ are given by
  \emph{modulations} between such lax transformations~\cite[Section
  4]{cockett-koslowski-seely-wood}.
\end{remark}

\begin{remark}\label{rem:laxmodule}
  The notion of monad lax 1-cell subsumes earlier notions.
  \begin{itemize}
  \item
    A monad map
    $\mT_1 \to \mT_2$ is equivalently a monad lax 1-cell
    $\mT_2 \to \mT_1$ that is carried by an identity 1-cell
    $\id_{\oS}$.
  \item A $\mT$-module $\bM \colon \oX \to \oS$ is equivalently a monad lax
    1-cell $\id_{\oX} \to \mT$ from an identity monad.
  \item
    Simply a 1-cell $\oS \to \oY$ is equivalently a monad lax
    1-cell $\mT \to \id_{\oY}$ to an identity monad.
  \end{itemize}

  The latter two observations are relevant for defining 2-categorical
  adjoints to the functor of 2-categories
  $\id_{\dash} \colon \tX \to \Mnd(\tX)$ as studied
  in~\cite{street:monads}. Namely, to give an algebra object $\AlgTS$
  is precisely to define the right adjoint to $\id_{\dash}$ at the
  object $\mT$. On the other hand the left adjoint to $\id_{\dash}$
  always exists, sending a monad to its underlying object.
\end{remark}

\begin{proposition}\label{prop:formaltfae}
  Let $\mT_1$ and $\mT_2$ be monads on $\oS_1$ and $\oS_2$, with
  algebra objects $\Alg{\mT_1}{\oS_1}$ and $\Alg{\mT_2}{\oS_2}$. Let

  $\fF \colon \oS_1 \to \oS_2$ be an arbitrary 1-cell.
  
  \begin{enumerate}[label=(\roman*)]
  \item\label{item:monadone}
    Giving $\fF$ the structure of a monad lax 1-cell $\mT_1 \to
    \mT_2$
    \vspace{-7.5pt}
    \[\preq
      \hspace{-\leftmargin}
      \mathclap{
        %
      }
      \afterimage
      \vspace{-7.5pt}
    \]

    \noindent
    is equivalent to giving a 1-cell between algebra objects
    \vspace{-16.25pt}
    \[\preq
      \hspace{-\leftmargin}
      \mathclap{\emlift{\fF} \colon \Alg{\mT_1}{\oS_1} \to \Alg{\mT_2}{\oS_2}}
      \qqq
      \mathclap{
        %
      }
    \]
    \afterimage
    \vspace{-8.75pt}
  \end{enumerate}

  \noindent
  Now let $\fF_1, \fF_2 \colon S_1 \to S_2$ be monad lax 1-cells
  $\mT_1 \to \mT_2$. Let $\gamma \colon \fF_1 \Rightarrow \fF_2$
  be an arbitrary 2-cell.
  
  \begin{enumerate}[label=(\roman*)]
    \setcounter{enumi}{1}
  \item\label{item:monadtwo} We have that $\gamma$ is a monad 2-cell
    $\fF_1 \Rightarrow \fF_2$ if and only if there is a (unique) 2-cell
    ${\emlift{\gamma} \colon \emlift{\fF_1} \Rightarrow
      \emlift{\fF_2}}$ \vspace{-15pt}
    \[\preq
      \hspace{-\leftmargin}
      \mathclap{
        %
      }
    \]
    is equivalent to an arbitrary 2-cell
    $\emlift{\sigma} \colon \emlift{\fF_1} \Rightarrow \emlift{\fF_2}$.
    \[\preq
      \hspace{-\leftmargin}
      \mathclap{
        %
      }
      \afterimage
      \vspace{-10pt}
    \]
  \end{enumerate}
\end{proposition}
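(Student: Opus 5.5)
The plan is to reduce everything to the universal property of the monadic 1-cells $\mcar{\mT_1}$ and $\mcar{\mT_2}$, and to \cref{prop:termresolve}. From that proposition we use the adjunction $\mlcar{\mT_1} \dashv \mcar{\mT_1}$ resolving $\mT_1$, so that $\mT_1 = \mlcar{\mT_1} \then \mcar{\mT_1}$, and the faithfulness of $\mcar{\mT_i}$. I read the three claims as follows:
\begin{itemize}
\item in \labelcref{item:monadone}, $\emlift{\fF}$ ranges over 1-cells with $\emlift{\fF} \then \mcar{\mT_2} = \mcar{\mT_1} \then \fF$;
\item in \labelcref{item:monadtwo}, $\emlift{\gamma}$ is required to satisfy $\emlift{\gamma} \then \mcar{\mT_2} = \mcar{\mT_1} \then \gamma$;
\item the final clause says that monad specializations $\sigma \colon \fF_1 \Rightarrow \mT_1 \then \fF_2$ correspond bijectively to arbitrary 2-cells $\emlift{\sigma} \colon \emlift{\fF_1} \Rightarrow \emlift{\fF_2}$.
\end{itemize}

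For \labelcref{item:monadone}, start from a monad lax 1-cell $(\fF,\chi)$. Make $\mcar{\mT_1} \then \fF$ into a $\mT_2$-module with action
\[
\mcar{\mT_1}\then\fF\then\mT_2 \xrightarrow{\chi} \mcar{\mT_1}\then\mT_1\then\fF \xrightarrow{\mcarmod{\mT_1}} \mcar{\mT_1}\then\fF .
\]
Associativity and unit follow by sliding: apply the two laws of $\chi$, then the module laws of $\mcarmod{\mT_1}$. The universal property of $\mcar{\mT_2}$ then yields $\emlift{\fF}$.

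For the converse, let $\emlift{\fF}$ satisfy $\emlift{\fF}\then\mcar{\mT_2} = \mcar{\mT_1}\then\fF$. Define $\chi$ as a mate: whisker the unit of $\mlcar{\mT_1}\dashv\mcar{\mT_1}$ to get $\fF\then\mT_2 \Rightarrow \mlcar{\mT_1}\then\mcar{\mT_1}\then\fF\then\mT_2$. Rewrite this target as $\mlcar{\mT_1}\then\emlift{\fF}\then\mcar{\mT_2}\then\mT_2$, then apply $\mcarmod{\mT_2}$ to land in $\mlcar{\mT_1}\then\emlift{\fF}\then\mcar{\mT_2} = \mT_1\then\fF$. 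The lax 1-cell laws for this $\chi$ follow from the module laws of $\mcarmod{\mT_2}$ together with the equation of \labelcref{item:resolvemon} expressing $\mu$ through the counit.

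The main obstacle is checking that the two constructions are mutually inverse.
\begin{itemize}
\item Starting from $\chi$, building the module and then the mate gives back $\mcarmod{\mT_1}$ precomposed with the unit. This collapses to $\chi$ by the monad unit law of $\mT_1$, written via \labelcref{item:resolvemon}; here I would first draw the string diagram and slide $\chi$ past the unit.
\item Starting from $\emlift{\fF}$, the module built from its mate must equal the original action $\emlift{\fF}\then\mcarmod{\mT_2}$. By uniqueness of factorizations through $\mcar{\mT_2}$ this forces the same $\emlift{\fF}$. The check uses the fact that $\mcarmod{\mT_1}$ is the counit whiskered by $\mcar{\mT_1}$, then a snake equation and associativity of $\mcarmod{\mT_2}$.
\end{itemize}

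For \labelcref{item:monadtwo}, note that $\emlift{\gamma}$ exists (uniquely) exactly when $\mcar{\mT_1}\then\gamma$ is a $\mT_2$-module map between the modules of \labelcref{item:monadone}. The module-map equation is the monad 2-cell law whiskered by $\mcar{\mT_1}$ and followed by $\mcarmod{\mT_1}$. Conversely, whiskering that equation by $\mlcar{\mT_1}$ and precomposing with the adjunction unit recovers the monad 2-cell law, using the mate description of $\chi$. Uniqueness is faithfulness of $\mcar{\mT_2}$.

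For specializations, send $\sigma$ to the composite
\[
\mcar{\mT_1}\then\fF_1 \xrightarrow{\sigma} \mcar{\mT_1}\then\mT_1\then\fF_2 \xrightarrow{\mcarmod{\mT_1}} \mcar{\mT_1}\then\fF_2 .
\]
The specialization law is precisely what makes this a $\mT_2$-module map, so it lifts uniquely to $\emlift{\sigma}$. In the other direction, send $\emlift{\sigma}$ to
\[
\fF_1 \Rightarrow \mlcar{\mT_1}\then\mcar{\mT_1}\then\fF_1 = \mlcar{\mT_1}\then\emlift{\fF_1}\then\mcar{\mT_2} \xrightarrow{\emlift{\sigma}} \mlcar{\mT_1}\then\emlift{\fF_2}\then\mcar{\mT_2} = \mT_1\then\fF_2 .
\]
The round trips reduce, as before, to a snake equation and the module unit law, with faithfulness of $\mcar{\mT_2}$ used to compare lifts.
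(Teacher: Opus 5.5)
Your proposal is correct and is essentially the paper's argument. Both use the mate correspondence for $\mlcar{\mT_1}\dashv\mcar{\mT_1}$ to identify, respectively, lax 1-cell structure on $\fF$, monad 2-cells, and specializations with $\mT_2$-module structure on $\mcar{\mT_1}\then\fF$, module maps of the form $\mcar{\mT_1}\then\gamma$, and arbitrary module maps, and then invoke the universal property of $\Alg{\mT_2}{\oS_2}$. The only difference is that you verify (ii) directly by taking mates, whereas the paper deduces it from (iii) together with \cref{rem:tospec}; both work, and in your second round trip only interchange and a snake equation are actually needed, not associativity of $\mcarmod{\mT_2}$.
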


More generally, analogous results hold replacing
$\mcar{\mT_1} \colon \Alg{\mT_1}{\oS_1} \to \oS_1$ with any right
adjoint 1-cell inducing $\mT_1$, without changing the argument. We
will also generalize this further in
\cref{prop:cformaltfae,prop:pfformaltfae}.

\begin{proof}
  First observe that for fixed $\fF \colon \oS_1 \to \oS_2$, giving
  monad lax 1-cell structure on $\fF$ is equivalent to giving
  $\mT_2$-module structure on $\mcar{\mT_1} \then \fF$
  \vspace{-7.5pt}
  
  \[\preq
    \mathclap{
      %
    }
    \afterimage
    \vspace{-5pt}
  \]

  \noindent
  since the module laws
  \vspace{-7.5pt}

  \[\preq
    \mathclap{
      %
    }
    \afterimage
  \]
  
  \noindent
  are mate\footnote{In a 2-category, 2-cells are called \emph{mate}
    when they are related by composing with the unit and counit of an
    adjunction to replace 1-cells in the domain with their adjoints in
    the codomain and/or vice versa. Taking mates is an invertible
    process (providing natural bijections), and equations between
    2-cells can be deduced from equations involving their mates.} to
  the monad lax 1-cell laws.  We thus obtain
  \labelcref{item:monadone}, since $\mT_2$-module structures on
  $\mcar{\mT_1} \then \fF$ are equivalent to factorings of
  $\mcar{\mT_1} \then \fF$ through $\mcar{\mT_2}$ as desired, by the
  universal property of $\Alg{\mT_2}{\oS_2}$.
  
  Likewise, a monad specialization
  $\sigma \colon \fF_1 \Rightarrow \fF_2$ is equivalent to a
  $\mT_2$-module map
  $\phi \colon \mcar{\mT_1} \then \fF_1 \Rightarrow \mcar{\mT_1} \then
  \fF_2$
  \[\preq
    \mathclap{
      \begin{tikzpicture}[yscale=.75]
        \node [ob] (s) at (-7,0) {$\Alg{\mT_1}{\oS_1}$};
        \node [ob] (u) at (0,7) {$\oS_1$};
        \node [ob] (d) at (0,-7) {$\oS_1$};
        \node [ob] (t) at (7,0) {$\oS_2$};
        \draw [a,long] (s) -- node[arr,al] {$\mcar{\mT_1}$} (u);
        \draw [a,long] (u) -- node[arr,ar] {$\fF_1$} (t);
        \draw [a,long] (s) -- node[arr,bl] {$\mcar{\mT_1}$} (d);
        \draw [a,long] (d) -- node[arr,br] {$\fF_2$} (t);
        \node [cell] at (0,0) {$\phi$};
      \end{tikzpicture}
    }
    \qqq
    \mathclap{
      \begin{tikzpicture}
        \begin{scope}
          \clip [boundc] (-6,-6) rectangle (6,6);
          \begin{scope}[shift={(1.5,0)}]
            \fill [bg] (-10,-6) rectangle (6,6);
            \renewcommand{\mypatha}{(-4,-6) .. controls +(0,3) and +(-1,-1) .. (0,0) .. controls +(-1,1) and +(0,-3) .. (-4,6)}
            \renewcommand{\mypathb}{(0,8) .. controls +(0,-2) and +(0,3) .. (0,0)}
            \renewcommand{\mypathc}{(0,0) .. controls +(0,-3) and +(0,2) .. (0,-8)}
            \fill [em=red] \mypatha -- (-10,6) -- (-10,-6) -- cycle;
            \begin{scope}
              \clip \mypatha -- \mypathb -- \mypathc -- cycle;
              \node [tspech,red] at (-.25,0) {};
            \end{scope}
            \begin{scope}
              \clip \mypathb -- \mypathc -- (6,-6) -- (6,6) -- cycle;
              \fill [bgd] (-10,-6) rectangle (6,6);
              \node [tspech,blue] at (-.25,0) {};
            \end{scope}
            \draw [oem,red] \mypatha;
            \draw [olax,purp] \mypathb;
            \draw [olax,dpurp] \mypathc;
            \node [tspec] at (-.25,0) {};
          \end{scope}
        \end{scope}
        \draw [bound] (-6,-6) rectangle (6,6);
        \node [label,right=9.5] at (0,0) {$\phi$};
      \end{tikzpicture}
    }
  \]
  \afterimage
  since the module map law
  \[\preq
    \mathclap{
      \begin{tikzpicture}[xscale=1.5,yscale=1.5]
        \node [ob] (s) at (-7,0) {$\Alg{\mT_1}{\oS_1}$};
        \node [ob] (u) at (0,7) {$\oS_2$};
        \node [ob] (su) at (-3.5,3.5) {$\oS_1$};
        \node [ob] (suu) at (-6.5,6.5) {$\oS_1$};
        \node [ob] (sd) at (0,0) {$\oS_1$};
        \node [ob] (t) at (7,0) {$\oS_2$};
        \draw [a] (s) -- node[arr,left] {$\mcar{\mT_1}$} (suu);
        \draw [a] (suu) -- node[arr,above] {$\fF_1$} (u);
        \draw [a,long] (s) -- node[arr,brcl,pos=0] {$\mcar{\mT_1}$} (su);
        \draw [a,long] (su) -- node[arr,brcl,pos=0] {$\fF_2$} (u);
        \draw [a] (u) -- node[arr,ar] {$\mT_2$} (t);
        \draw [a] (s) -- node[arr,below] {$\mcar{\mT_1}$} (sd);
        \draw [a] (sd) -- node[arr,below] {$\fF_2$} (t);
        \node [cell] at (-4.9,4.9) {$\phi$};
        \node [cell] at (0,2.5) {$\rho$};
        \node [ob] (d) at (0,-7) {$\phantom{\oS_1}$};
      \end{tikzpicture}
      \eq\;
      \begin{tikzpicture}[xscale=1.5,yscale=1.5]
        \node [ob] (s) at (-7,0) {$\Alg{\mT_1}{\oS_1}$};
        \node [ob] (u) at (0,7) {$\oS_2$};
        \node [ob] (su) at (-3.5,3.5) {$\oS_1$};
        \node [ob] (d) at (0,-5) {$\oS_1$};
        \node [ob] (sd) at (0,0) {$\oS_1$};
        \node [ob] (t) at (7,0) {$\oS_2$};
        \draw [a,long] (s) -- node[arr,al,pos=0] {$\mcar{\mT_1}$} (su);
        \draw [a,long] (su) -- node[arr,al,pos=0] {$\fF_1$} (u);
        \draw [a] (s) -- node[arr,bl] {$\mcar{\mT_1}$} (d);
        \draw [a] (d) -- node[arr,br] {$\fF_2$} (t);
        \draw [a] (u) -- node[arr,ar] {$\mT_2$} (t);
        \draw [a] (s) -- node[arr,bcl] {$\mcar{\mT_1}$} (sd);
        \draw [a] (sd) -- node[arr,bcl] {$\fF_1$} (t);
        \node [cell] at (0,-2.3) {$\phi$};
        \node [cell] at (0,3) {$\rho$};
      \end{tikzpicture}
    }
    \qqq
    \mathclap{
      \begin{tikzpicture}[xscale=.875,yscale=1.125]
        \begin{scope}
          \clip [boundc] (-9,-7) rectangle (8,7);
          \fill [bg] (-9,-7) rectangle (8,7);
          \coordinate (mypointa) at (-1.6,2.5);
          \coordinate (mypointb) at (0,-2.5);
          \coordinate (mypointal) at (-1.85,2.5);
          \renewcommand{\mypatha}{(-4,-7) .. controls +(0,3) and +(-1,-1) .. (mypointb) .. controls +(-5,1.5) and +(-4,-2) .. (mypointa) .. controls +(-2,1) and +(0,-2.5) .. (-5.6,7)}
          \renewcommand{\mypathb}{(-1.6,8) -- (mypointa)}
          \renewcommand{\mypathc}{(mypointa) .. controls +(0,-2) and +(0,3) .. (mypointb) -- (0,-8)}
          \fill [em=red] \mypatha -- (-9,7) -- (-9,-7) -- cycle;
          \begin{scope}
            \clip \mypathb -- \mypathc -- (8,-7) -- (8,7) -- cycle;
            \fill [bgd] (-9,-7) rectangle (8,7);
            \node [tspech,blue] at (mypointal) {};
          \end{scope}
          \begin{scope}
            \clip \mypatha -- \mypathb -- \mypathc -- cycle;
            \node [tspech,red] at (mypointal) {};
          \end{scope}
          \draw [oem,red] \mypatha;
          \draw [omon,blue] (4.5,7) -- (4.5,3) .. controls +(0,-3) and +(2,1) .. (mypointb);
          \draw [olax,purp] \mypathb;
          \draw [olax,dpurp] \mypathc;
          \node [tspec] at (mypointal) {};
        \end{scope}
        \draw [bound] (-9,-7) rectangle (8,7);
      \end{tikzpicture}
      \eqq
      \begin{tikzpicture}[xscale=.875,yscale=1.125]
        \begin{scope}
          \clip [boundc] (-9,-7) rectangle (8,7);
          \fill [bg] (-9,-7) rectangle (8,7);
          \coordinate (mypointa) at (0,2.5);
          \coordinate (mypointb) at (0,-2.5);
          \coordinate (mypointbl) at (-.25,-2.5);
          \renewcommand{\mypatha}{(-4,-7) .. controls +(0,3) and +(-1,-1) .. (mypointb) .. controls +(-4,2) and +(-4.5,-2) .. (mypointa) .. controls +(-3.25,1) and +(0,-2) .. (-5.6,7)}
          \renewcommand{\mypathb}{(-1.6,8) .. controls +(0,-3) and +(0,3) .. (mypointa) -- (mypointb)}
          \renewcommand{\mypathc}{(mypointb) -- (0,-8)}
          \fill [em=red] \mypatha -- (-9,7) -- (-9,-7) -- cycle;
          \begin{scope}
            \clip \mypathb -- \mypathc -- (8,-7) -- (8,7) -- cycle;
            \fill [bgd] (-9,-7) rectangle (8,7);
            \node [tspech,blue] at (mypointbl) {};
          \end{scope}
          \begin{scope}
            \clip \mypatha -- \mypathb -- \mypathc -- cycle;
            \node [tspech,red] at (mypointbl) {};
          \end{scope}
          \draw [oem,red] \mypatha;
          \draw [omon,blue] (4.5,7) .. controls +(0,-2) and +(3,1) .. (mypointa);
          \draw [olax,purp] \mypathb;
          \draw [olax,dpurp] \mypathc;
          \node [tspec] at (mypointbl) {};
        \end{scope}
        \draw [bound] (-9,-7) rectangle (8,7);
      \end{tikzpicture}
    }
    \vspace{-10pt}
  \]
  \afterimage

  \noindent
  is mate to the monad specialization law.  We thus obtain
  \labelcref{item:monadspec}, since $\mT_2$-module maps are equivalent
  to 2-cells between 1-cells into $\Alg{\mT_2}{\oS_2}$ again by its
  universal property.

  Finally, observe that the monad specialization $\sigma$ arises from a monad
  2-cell $\gamma$ as in \cref{rem:tospec}
  \[\preq
    \mathclap{
      \begin{tikzpicture}
        \node [ob] (s) at (-7,0) {$\oS_1$};
        \node [ob] (m) at (0,-7) {$\oS_1$};
        \node [ob] (t) at (7,0) {$\oS_2$};
        \draw [a] (s) -- node[arr,above] {$\fF_1$} (t);
        \draw [a] (s) -- node[arr,bl] {$\mT_1$} (m);
        \draw [a] (m) -- node[arr,br] {$\fF_2$} (t);
        \node [cell] at (0,-2.5) {$\sigma$};
      \end{tikzpicture}
      \eqq
      \begin{tikzpicture}[xscale=.625,yscale=1.125]
        \node [ob] (s) at (-7,0) {$\oS_1$};
        \node [ob] (t) at (7,0) {$\oS_2$};
        \node [ob] (ss) at (-21,0) {$\oS_1$};
        \draw [eq] (ss) .. controls +(5,3) and +(-5,3) .. (s);
        \draw [a] (ss) .. controls +(5,-3) and +(-5,-3) .. node[arr,below] {$\mT_1$} (s);
        \draw [a] (s) .. controls +(5,3) and +(-5,3) .. node[arr,above] {$\fF_1$} (t);
        \draw [a] (s) .. controls +(5,-3) and +(-5,-3) .. node[arr,below] {$\fF_2$} (t);
        \node [cell] at (-14,0) {$\eta$};
        \node [cell] at (0,0) {$\gamma$};
      \end{tikzpicture}
    }
    \qqq
    \mathclap{
      \begin{tikzpicture}
        \begin{scope}
          \clip [boundc] (-6,-6) rectangle (6,6);
          \fill [bg] (-6,-6) rectangle (6,6);
          \begin{scope}
            \clip (1.5,6) -- (1.5,-6) -- (6,-6) -- (6,6) -- cycle;
            \fill [bgd] (-6,-6) rectangle (6,6);
            \node [tspech,blue] at (1.5,0) {};
          \end{scope}
          \begin{scope}
            \clip (1.5,6) -- (1.5,-6) -- (-6,-6) -- (-6,6) -- cycle;
            \node [tspech,red] at (1.5,0) {};
          \end{scope}
          \draw [omon,red] (1.5,0) .. controls +(-2.5,-1) and +(0,3) .. (-3,-7);
          \draw [olax,purp] (1.5,7) -- (1.5,0);
          \draw [olax,dpurp] (1.5,0) -- (1.5,-8);
          \node [tspec] at (1.5,0) {};
        \end{scope}
        \draw [bound] (-6,-6) rectangle (6,6);
        \node [label,above=4.5,left=5] at (1.5,0) {$\sigma$};
      \end{tikzpicture}
      \eqq
      \begin{tikzpicture}
        \begin{scope}
          \clip [boundc] (-6,-6) rectangle (6,6);
          \fill [bg] (-6,-6) rectangle (6,6);
          \fill [bgd] (1.5,-6) rectangle (6,6);
          \draw [olax,purp] (1.5,7) -- (1.5,0);
          \draw [olax,dpurp] (1.5,0) -- (1.5,-8);
          \node [tmodi] at (1.5,0) {};
          \draw [omon,red] (-2.5,-2.5) -- (-2.5,-7);
          \node [tmon,red] at (-2.5,-2.5) {};
        \end{scope}
        \draw [bound] (-6,-6) rectangle (6,6);
        \node [label,above=3,left=3] at (1.5,0) {$\gamma$};
      \end{tikzpicture}
    }
  \]
  \afterimage
  if and only if the corresponding $\mT_2$-module map factors as
  \[\preq
    \mathclap{
      \begin{tikzpicture}[xscale=1,yscale=.75]
        \node [ob] (s) at (-7,0) {$\Alg{\mT_1}{\oS_1}$};
        \node [ob] (u) at (0,7) {$\oS_1$};
        \node [ob] (d) at (0,-7) {$\oS_1$};
        \node [ob] (t) at (7,0) {$\oS_2$};
        \draw [a,long] (s) -- node[arr,al] {$\mcar{\mT_1}$} (u);
        \draw [a,long] (u) -- node[arr,ar] {$\fF_1$} (t);
        \draw [a,long] (s) -- node[arr,bl] {$\mcar{\mT_1}$} (d);
        \draw [a,long] (d) -- node[arr,br] {$\fF_2$} (t);
        \node [cell] at (0,0) {$\phi$};
      \end{tikzpicture}
      \eqq
      \begin{tikzpicture}[xscale=.625,yscale=1.125]
        \node [ob] (s) at (-7,0) {$\oS_1$};
        \node [ob] (t) at (7,0) {$\oS_2$};
        \draw [a] (s) .. controls +(5,3) and +(-5,3) .. node[arr,above] {$\fF_1$} (t);
        \draw [a] (s) .. controls +(5,-3) and +(-5,-3) .. node[arr,below] {$\fF_2$} (t);
        \node [cell] at (0,0) {$\gamma$};
        \node [ob] (ss) at (-21,0) {$\Alg{\mT_1}{\oS_1}$};
        \draw [a] (ss) -- node[arr,above] {$\mcar{\mT_1}$} (s);
      \end{tikzpicture}
    }
    \qqq
    \mathclap{
      \begin{tikzpicture}
        \begin{scope}
          \clip [boundc] (-6,-6) rectangle (6,6);
          \begin{scope}[shift={(1.5,0)}]
            \fill [bg] (-10,-6) rectangle (6,6);
            \renewcommand{\mypatha}{(-4,-6) .. controls +(0,3) and +(-1,-1) .. (0,0) .. controls +(-1,1) and +(0,-3) .. (-4,6)}
            \renewcommand{\mypathb}{(0,8) .. controls +(0,-2) and +(0,3) .. (0,0)}
            \renewcommand{\mypathc}{(0,0) .. controls +(0,-3) and +(0,2) .. (0,-8)}
            \fill [em=red] \mypatha -- (-10,6) -- (-10,-6) -- cycle;
            \begin{scope}
              \clip \mypatha -- \mypathb -- \mypathc -- cycle;
              \node [tspech,red] at (-.25,0) {};
            \end{scope}
            \begin{scope}
              \clip \mypathb -- \mypathc -- (6,-6) -- (6,6) -- cycle;
              \fill [bgd] (-10,-6) rectangle (6,6);
              \node [tspech,blue] at (-.25,0) {};
            \end{scope}
            \draw [oem,red] \mypatha;
            \draw [olax,purp] \mypathb;
            \draw [olax,dpurp] \mypathc;
            \node [tspec] at (-.25,0) {};
          \end{scope}
        \end{scope}
        \draw [bound] (-6,-6) rectangle (6,6);
      \end{tikzpicture}
      \eqq
      \begin{tikzpicture}
        \begin{scope}
          \clip [boundc] (-6,-6) rectangle (6,6);
          \begin{scope}[xscale=1.0625,shift={(-.375,0)}]
            \fill [bg] (-6,-6) rectangle (6,6);
            \fill [bgd] (2,-6) rectangle (6,6);
            \fill [em=red] (-6,-6) rectangle (-2,6);
            \draw [oem,red] (-2,6) -- (-2,-6);
            \draw [olax,purp] (2,8) -- (2,0);
            \draw [olax,dpurp] (2,0) -- (2,-8);
            \node [tmhomi] at (2,0) {};
          \end{scope}
        \end{scope}
        \draw [bound] (-6,-6) rectangle (6,6);
      \end{tikzpicture}
    }
  \]
  \afterimage
  which yields \labelcref{item:monadtwo}.
\end{proof}


\begin{remark}\label{rem:emarrow}
  The above \cref{prop:formaltfae}~\labelcref{item:monadone} and
  \labelcref{item:monadtwo} tell us that the 2-category consisting of
  monads in $\tX$ with (chosen) algebra objects, monad lax 1-cells,
  and monad 2-cells embeds as a full sub-2-category of the 2-category
  $[\arrow, \tX]$ of functors from $\arrow$ to $\tX$, strict
  transformations, and modifications. (Here $\arrow$ denotes the free
  2-category containing two objects and a 1-cell between them, so that
  functors from $\arrow$ to a 2-category $\tX$ are 1-cells in $\tX$.)

  A functor from an arbitrary 2-category $\tC$ to $[\arrow, \tX]$ is
  (by currying) the same as a functor from $\arrow$ to $[\tC,
  \tX]$. Assuming the relevant algebra objects exist, a diagram
  ${\tC \to \Mnd(\tX)}$ is then equivalent to a pair of diagrams
  $\tC \to \tX$ and a strict transformation between them whose
  components are 1-cells of the form $\mcarT \colon \AlgTS \to \oS$.
\end{remark}

\begin{remark}
  By the universal property of algebra objects, a 1-cell
  $\Alg{\mT_1}{\oS_1} \to \Alg{\mT_2}{\oS_2}$ is determined by
  assigning each $\mT_1$-module $\oX \to \oS_1$ a $\mT_2$-module
  $\oX \to \oS_2$, functorial in module maps and natural in composing
  with cells into $\oX$. (This is a natural transformation between the
  $\Cat$-valued representable functors defining $\Alg{\mT_1}{\oS_1}$
  and $\Alg{\mT_2}{\oS_2}$, which corresponds to a 1-cell
  $\Alg{\mT_1}{\oS_1} \to \Alg{\mT_2}{\oS_2}$ via Yoneda
  embedding. Likewise, a 2-cell between such 1-cells is determined by
  a modification between the natural transformations.)

  According to~\cref{prop:formaltfae}~\labelcref{item:monadone}, any
  monad lax 1-cell $\fF \colon \mT_1 \to \mT_2$ induces a 1-cell
  $\emlift{\fF} \colon \Alg{\mT_1}{\oS_1} \to \Alg{\mT_2}{\oS_2}$. Here the
  corresponding assignment on modules is given by composition of monad
  1-cells, viewing the $\mT_1$-module itself as a monad lax 1-cell
  from identity.

  \[\preq
    \hspace{-\leftmargin}
    \mathclap{
      \begin{tikzpicture}
        \node [ob] (s) at (-7,0) {$\oX$};
        \node [ob] (m) at (0,7) {$\oS_1$};
        \node [ob] (t) at (7,0) {$\oS_1$};
        \draw [a] (s) -- node[arr,al] {$\bM$} (m);
        \draw [a] (m) -- node[arr,ar] {$\mT_1$} (t);
        \draw [a] (s) -- node[arr,below] {$\bM$} (t);
        \node [cell] at (0,2.5) {$\rho$};
      \end{tikzpicture}
      \eqquad\mapsto\eqquad
      \begin{tikzpicture}
        \node [ob] (s) at (-7,0) {$\oX$};
        \node [ob] (m) at (0,7) {$\oS_1$};
        \node [ob] (t) at (7,0) {$\oS_1$};
        \node [ob] (mt) at (10.5,7) {$\oS_2$};
        \node [ob] (tt) at (17.5,0) {$\oS_2$};
        \draw [a] (s) -- node[arr,al] {$\bM$} (m);
        \draw [a] (m) -- node[arr,arcl] {$\mT_1$} (t);
        \draw [a] (s) -- node[arr,below] {$\bM$} (t);
        \draw [a] (m) -- node[arr,above] {$\fF$} (mt);
        \draw [a] (mt) -- node[arr,ar] {$\mT_2$} (tt);
        \draw [a] (t) -- node[arr,below] {$\fF$} (tt);
        \node [cell] at (0,2.5) {$\rho$};
        \node [cell] at (8.75,3.5) {$\chi$};
      \end{tikzpicture}
    }
    \qqq
    \mathclap{
      \begin{tikzpicture}
        \begin{scope}
          \clip [boundc] (-6,-6) rectangle (6,6);
          \fill [bg] (-6,-6) rectangle (6,6);
          \renewcommand{\mypath}{(-2.6,6) .. controls +(0,-3.5) and +(-1,1) .. (0,0) -- (0,-6)}
          \fill [bgm] \mypath -- (-6,-6) -- (-6,6) -- cycle;
          \draw [omon,red] (2.6,6) .. controls +(0,-3.5) and +(1,1) .. (0,0);
          \draw [omods=red] \mypath;
        \end{scope}
        \draw [bound] (-6,-6) rectangle (6,6);
        \node [label,red,above] at (2.6,6) {$\mT_1$};
        \node [label,above] at (-2.6,6) {$\bM$};
        \node [label,red,below] at (2.6,-6) {$\phantom{\mT_1}$};
        \node [label,below] at (-2.6,-6) {$\phantom{\bM}$};
      \end{tikzpicture}
      \eqquad\mapsto\eqquad
      \begin{tikzpicture}
        \begin{scope}
          \clip [boundc] (-6,-6) rectangle (6,6);
          \fill [bg] (-6,-6) rectangle (6,6);
          \renewcommand{\mypatha}{(-3.1,6) -- (-3.1,5) .. controls +(0,-3.5) and +(-1,1) .. (-.5,-2) -- (-.5,-6)}
          \renewcommand{\mypathb}{(.125,9) -- (.125,5.875) .. controls +(0,-6) and +(0,6) .. (2.875,-4.125) -- (2.875,-9)}
          \renewcommand{\mypathc}{(3.5,6) .. controls +(0,-4.5) and +(1,1) .. (-.5,-2)}
          \fill [bgm] \mypatha -- (-6,-6) -- (-6,6) -- cycle;
          \begin{scope}
            \clip \mypathb -- (-6,-6) -- (-6,6) -- cycle;
            \draw [omon,red] \mypathc;
          \end{scope}
          \begin{scope}
            \clip \mypathb -- (6,-6) -- (6,6) -- cycle;
            \fill [bgd] (-6,-6) rectangle (6,6);
            \draw [omon,blue] \mypathc;
          \end{scope}
          \draw [omods=red] \mypatha;
          \draw [olax,purp] \mypathb;
        \end{scope}
        \draw [bound] (-6,-6) rectangle (6,6);
        \node [label,blue,above] at (3.5,6) {$\mT_2$};
        \node [label,purp,above] at (.125,6) {$\fF$};
        \node [label,blue,below] at (3.5,-6) {$\phantom{\mT_2}$};
        \node [label,purp,below] at (.125,-6) {$\phantom{\fF}$};
      \end{tikzpicture}
    }
  \]
  \afterimage

  
  \noindent
  (Likewise, according
  to~\cref{prop:formaltfae}~\labelcref{item:monadone}, any
  specialization $\sigma \colon \fF_1 \Rightarrow \fF_2$ induces a
  2-cell $\emlift{\sigma} \colon \emlift{\fF_1} \Rightarrow \emlift{\fF_2}$, and here the
  corresponding modification between module assignments is given by
  horizontal composition with $\sigma$ in $\EM(\tX)$.)

  In particular, $\mT_1$ is a $\mT_1$-module, and so
  $\mT_1 \then f$ is a $\mT_2$-module.
  \[\preq
    \hspace{-\leftmargin}
    \mathclap{
      \begin{tikzpicture}
        \node [ob] (s) at (-7,0) {$\oS_1$};
        \node [ob] (m) at (0,7) {$\oS_1$};
        \node [ob] (t) at (7,0) {$\oS_1$};
        \node [ob] (mt) at (10.5,7) {$\oS_2$};
        \node [ob] (tt) at (17.5,0) {$\oS_2$};
        \draw [a] (s) -- node[arr,al] {$\mT_1$} (m);
        \draw [a] (m) -- node[arr,arcl] {$\mT_1$} (t);
        \draw [a] (s) -- node[arr,below] {$\mT_1$} (t);
        \draw [a] (m) -- node[arr,above] {$\fF$} (mt);
        \draw [a] (mt) -- node[arr,ar] {$\mT_2$} (tt);
        \draw [a] (t) -- node[arr,below] {$\fF$} (tt);
        \node [cell] at (0,2.5) {$\mu$};
        \node [cell] at (8.75,3.5) {$\chi$};
      \end{tikzpicture}
    }
    \qqq
    \mathclap{
      \begin{tikzpicture}
        \begin{scope}
          \clip [boundc] (-6,-6) rectangle (6,6);
          \fill [bg] (-6,-6) rectangle (6,6);
          \renewcommand{\mypatha}{(-3.1,6) -- (-3.1,5) .. controls +(0,-3.5) and +(-1,1) .. (-.5,-2) -- (-.5,-6)}
          \renewcommand{\mypathb}{(.125,9) -- (.125,5.875) .. controls +(0,-6) and +(0,6) .. (2.875,-4.125) -- (2.875,-9)}
          \renewcommand{\mypathc}{(3.5,6) .. controls +(0,-4.5) and +(1,1) .. (-.5,-2)}
          \begin{scope}
            \clip \mypathb -- (-6,-6) -- (-6,6) -- cycle;
            \draw [omon,red] \mypathc;
          \end{scope}
          \begin{scope}
            \clip \mypathb -- (6,-6) -- (6,6) -- cycle;
            \fill [bgd] (-6,-6) rectangle (6,6);
            \draw [omon,blue] \mypathc;
          \end{scope}
          \draw [omon,red] \mypatha;
          \draw [olax,purp] \mypathb;
        \end{scope}
        \draw [bound] (-6,-6) rectangle (6,6);
      \end{tikzpicture}
    }
  \]
  \afterimage

  \noindent
  Incidentally, the first monad lax 1-cell law of \cref{def:monadone}
  says precisely that the structure 2-cell
  $\chi \colon f \then \mT_2 \Rightarrow \mT_1 \then f$ constitutes a
  $\mT_2$-module map.
\end{remark}

\begin{convention}\label{con:expand}
  In string diagrams, the identity from
  \cref{prop:formaltfae}~\labelcref{item:monadone}
  may be written in various forms as crossings of strings
  \[
    \,
    \begin{tikzpicture}[yscale=.875]
      \node [ob] (s) at (-7,0) {$\Alg{\mT_1}{\oS_1}$};
      \node [ob] (u) at (0,7) {$\oS_1$};
      \node [ob] (t) at (7,0) {$\oS_2$};
      \node [ob] (d) at (0,-7) {$\Alg{\mT_2}{\oS_2}$};
      \draw [a,long] (s) -- node[arr,al] {$\mcar{\mT_1}$} (u);
      \draw [a,long] (u) -- node[arr,ar] {$\fF$} (t);
      \draw [a,long] (s) -- node[arr,bl] {$\emlift{\fF}$} (d);
      \draw [a,long] (d) -- node[arr,br] {$\mcar{\mT_2}$} (t);
      \node at (0,0) {$=$};
    \end{tikzpicture}
    \eqspace{9pt}
    \begin{tikzpicture}[yscale=.875]
      \node [ob] (s) at (-7,0) {$\Alg{\mT_1}{\oS_1}$};
      \node [ob] (u) at (0,7) {$\oS_1$};
      \node [ob] (t) at (7,0) {$\oS_2$};
      \node [ob] (d) at (0,-7) {$\oS_1$};
      \draw [a,long] (s) -- node[arr,al] {$\mcar{\mT_1}$} (u);
      \draw [a,long] (u) -- node[arr,ar] {$\fF$} (t);
      \draw [a,long] (s) -- node[arr,bl] {$\mcar{\mT_1}$} (d);
      \draw [a,long] (d) -- node[arr,br] {$\fF$} (t);
      \node at (0,0) {$=$};
    \end{tikzpicture}
    \eqspace{9pt}
    \begin{tikzpicture}[yscale=-.875]
      \node [ob] (s) at (-7,0) {$\Alg{\mT_1}{\oS_1}$};
      \node [ob] (u) at (0,7) {$\oS_1$};
      \node [ob] (t) at (7,0) {$\oS_2$};
      \node [ob] (d) at (0,-7) {$\Alg{\mT_2}{\oS_2}$};
      \draw [a,long] (s) -- node[arr,bl] {$\mcar{\mT_1}$} (u);
      \draw [a,long] (u) -- node[arr,br] {$\fF$} (t);
      \draw [a,long] (s) -- node[arr,al] {$\emlift{\fF}$} (d);
      \draw [a,long] (d) -- node[arr,ar] {$\mcar{\mT_2}$} (t);
      \node at (0,0) {$=$};
    \end{tikzpicture}
    \eqspace{9pt}
    \begin{tikzpicture}[yscale=.875]
      \node [ob] (s) at (-7,0) {$\Alg{\mT_1}{\oS_1}$};
      \node [ob] (u) at (0,7) {$\Alg{\mT_2}{\oS_2}$};
      \node [ob] (t) at (7,0) {$\oS_2$};
      \node [ob] (d) at (0,-7) {$\Alg{\mT_2}{\oS_2}$};
      \draw [a,long] (s) -- node[arr,al] {$\emlift{\fF}$} (u);
      \draw [a,long] (u) -- node[arr,ar] {$\mcar{\mT_2}$} (t);
      \draw [a,long] (s) -- node[arr,bl] {$\emlift{\fF}$} (d);
      \draw [a,long] (d) -- node[arr,br] {$\mcar{\mT_2}$} (t);
      \node at (0,0) {$=$};
    \end{tikzpicture}
  \]
  \[
    \begin{tikzpicture}
      \begin{scope}
        \clip [boundc] (-8,-6) rectangle (8,6);
        \fill [bg] (-8,-6) rectangle (8,6);
        \renewcommand{\mypatha}{(3,7) -- (3,6) .. controls +(0,-3) and +(1,1) .. (0,0) .. controls +(-1,-1) and +(0,3) .. (-3,-6) -- (-3,-7)};
        \renewcommand{\mypathb}{(-3,6) .. controls +(0,-3) and +(-1,1) .. (0,0) .. controls +(1,-1) and +(0,3) .. (3,-6)};
        \fill [em=red] \mypathb -- (-8,-6) -- (-8,6) -- cycle;
        \draw [oem,red] \mypathb;
        \begin{scope}
          \clip \mypatha -- (8,-6) -- (8,6) -- cycle;
          \fill [bgd] (-8,-6) rectangle (8,6);
          \fill [em=blue] \mypathb -- (-8,-6) -- (-8,6) -- cycle;
          \draw [oem,blue] \mypathb;
        \end{scope}
        \draw [olax,purp] \mypatha;
      \end{scope}
      \draw [bound] (-8,-6) rectangle (8,6);
      \node [label,red,above] at (-3,6) {$\mcar{\mT_1}$};
      \node [label,blue,below] at (3,-6) {$\mcar{\mT_2}$};
      \node [label,purp,below] at (-3,-6) {$\emlift{\fF}$};
      \node [label,purp,above] at (3,6) {$\fF$};
    \end{tikzpicture}
    \eqspace{15pt}
    \begin{tikzpicture}
      \begin{scope}
        \clip [boundc] (-8,-6) rectangle (8,6);
        \fill [bg] (-8,-6) rectangle (8,6);
        \renewcommand{\mypatha}{(3,7) -- (3,-7)};
        \renewcommand{\mypathb}{(-3,6) -- (-3,-6)};
        \fill [em=red] \mypathb -- (-8,-6) -- (-8,6) -- cycle;
        \draw [oem,red] \mypathb;
        \begin{scope}
          \clip \mypatha -- (8,-6) -- (8,6) -- cycle;
          \fill [bgd] (-8,-6) rectangle (8,6);
          \fill [em=blue] \mypathb -- (-8,-6) -- (-8,6) -- cycle;
          \draw [oem,blue] \mypathb;
        \end{scope}
        \draw [olax,purp] \mypatha;
      \end{scope}
      \draw [bound] (-8,-6) rectangle (8,6);
    \end{tikzpicture}
    \eqspace{15pt}
    \begin{tikzpicture}
      \begin{scope}
        \clip [boundc] (-8,-6) rectangle (8,6);
        \fill [bg] (-8,-6) rectangle (8,6);
        \renewcommand{\mypatha}{(-3,7) -- (-3,6) .. controls +(0,-3) and +(-1,1) .. (0,0) .. controls +(1,-1) and +(0,3) .. (3,-6) -- (3,-7)};
        \renewcommand{\mypathb}{(3,6) .. controls +(0,-3) and +(1,1) .. (0,0) .. controls +(-1,-1) and +(0,3) .. (-3,-6)};
        \fill [em=red] \mypathb -- (-8,-6) -- (-8,6) -- cycle;
        \draw [oem,red] \mypathb;
        \begin{scope}
          \clip \mypatha -- (8,-6) -- (8,6) -- cycle;
          \fill [bgd] (-8,-6) rectangle (8,6);
          \fill [em=blue] \mypathb -- (-8,-6) -- (-8,6) -- cycle;
          \draw [oem,blue] \mypathb;
        \end{scope}
        \draw [olax,purp] \mypatha;
      \end{scope}
      \draw [bound] (-8,-6) rectangle (8,6);
    \end{tikzpicture}
    \eqspace{15pt}
    \begin{tikzpicture}
      \begin{scope}
        \clip [boundc] (-8,-6) rectangle (8,6);
        \fill [bg] (-8,-6) rectangle (8,6);
        \renewcommand{\mypatha}{(-3,7) -- (-3,-7)};
        \renewcommand{\mypathb}{(3,6) -- (3,-6)};
        \fill [em=red] \mypathb -- (-8,-6) -- (-8,6) -- cycle;
        \draw [oem,red] \mypathb;
        \begin{scope}
          \clip \mypatha -- (8,-6) -- (8,6) -- cycle;
          \fill [bgd] (-8,-6) rectangle (8,6);
          \fill [em=blue] \mypathb -- (-8,-6) -- (-8,6) -- cycle;
          \draw [oem,blue] \mypathb;
        \end{scope}
        \draw [olax,purp] \mypatha;
      \end{scope}
      \draw [bound] (-8,-6) rectangle (8,6);
    \end{tikzpicture}
  \]
  \afterimage

  \noindent
  The first of these has a mate
  \[\preq
    \mathclap{
      \begin{tikzpicture}[yscale=.875]
        \node [ob] (s) at (-7,0) {$\oS_1$};
        \node [ob] (u) at (0,7) {$\oS_2$};
        \node [ob] (t) at (7,0) {$\Alg{\mT_2}{\oS_2}$};
        \node [ob] (d) at (0,-7) {$\Alg{\mT_1}{\oS_1}$};
        \draw [a,long] (s) -- node[arr,al] {$\fF$} (u);
        \draw [a,long] (u) -- node[arr,ar] {$\mlcar{\mT_2}$} (t);
        \draw [a,long] (s) -- node[arr,bl] {$\mlcar{\mT_1}$} (d);
        \draw [a,long] (d) -- node[arr,br] {$\emlift{\fF}$} (t);
        \node at (0,0) {$(=)^*$};
      \end{tikzpicture}
    }
    \qqq
    \mathclap{
      \begin{tikzpicture}
        \begin{scope}
          \clip [boundc] (-8,-6) rectangle (8,6);
          \fill [bg] (-8,-6) rectangle (8,6);
          \renewcommand{\mypatha}{(-3,7) -- (-3,6) .. controls +(0,-3) and +(-1,1) .. (0,0) .. controls +(1,-1) and +(0,3) .. (3,-6) -- (3,-7)};
          \renewcommand{\mypathb}{(3,6) .. controls +(0,-3) and +(1,1) .. (0,0) .. controls +(-1,-1) and +(0,3) .. (-3,-6)};
          \fill [em=red] \mypathb -- (8,-6) -- (8,6) -- cycle;
          \draw [oem,red] \mypathb;
          \begin{scope}
            \clip \mypatha -- (8,-6) -- (8,6) -- cycle;
            \fill [bgd] (-8,-6) rectangle (8,6);
            \fill [em=blue] \mypathb -- (8,-6) -- (8,6) -- cycle;
            \draw [oem,blue] \mypathb;
          \end{scope}
          \draw [olax,purp] \mypatha;
        \end{scope}
        \draw [bound] (-8,-6) rectangle (8,6);
        \node [label,blue,above] at (3,6) {$\mlcar{\mT_2}$};
        \node [label,red,below] at (-3,-6) {$\mlcar{\mT_1}$};
      \end{tikzpicture}
    }
  \]
  
  \noindent
  Now mate to the definition of $\emlift{\fF}$ is
  \[\preq
    \mathclap{
      \begin{tikzpicture}[yscale=.875,xscale=.875]
        \node [ob] (s) at (-7,0) {$\oS_1$};
        \node [ob] (t) at (7,0) {$\oS_2$};
        \node [ob] (u) at (0,7) {$\oS_2$};
        \node [ob] (d) at (0,-7) {$\oS_1$};
        \draw [a] (s) -- node[arr,al] {$\fF$} (u);
        \draw [a] (u) -- node[arr,ar] {$\mT_2$} (t);
        \draw [a] (s) -- node[arr,bl] {$\mT_1$} (d);
        \draw [a] (d) -- node[arr,br] {$\fF$} (t);
        \node [cell] at (0,0) {$\chi$};
      \end{tikzpicture}
      \eqq
      \begin{tikzpicture}[yscale=.875]
        \node [ob] (s) at (-7,0) {$\oS_1$};
        \node [ob] (u) at (0,7) {$\oS_2$};
        \node [ob] (t) at (7,0) {$\Alg{\mT_2}{\oS_2}$};
        \node [ob] (d) at (0,-7) {$\Alg{\mT_1}{\oS_1}$};
        \node [ob] (tt) at (14,-7) {$\oS_2$};
        \node [ob] (dt) at (7,-14) {$\oS_1$};
        \draw [a,long] (s) -- node[arr,al] {$\fF$} (u);
        \draw [a,long] (u) -- node[arr,ar] {$\mlcar{\mT_2}$} (t);
        \draw [a,long] (s) -- node[arr,bl] {$\mlcar{\mT_1}$} (d);
        \draw [a,long] (d) -- node[arr,brcl] {$\emlift{\fF}$} (t);
        \draw [a,long] (t) -- node[arr,ar] {$\mcar{\mT_2}$} (tt);
        \draw [a,long] (d) -- node[arr,bl] {$\mcar{\mT_1}$} (dt);
        \draw [a,long] (dt) -- node[arr,br] {$\fF$} (tt);
        \node at (0,0) {$(=)^*$};
        \node at (7,-7) {$=$};
      \end{tikzpicture}
    }
    \qqq
    \mathclap{
      \begin{tikzpicture}[scale=1.3]
        \begin{scope}
          \clip [boundc] (-6,-6) rectangle (6,6);
          \fill [bg] (-6,-6) rectangle (6,6);
          \renewcommand{\mypath}{(-3,6) .. controls +(0,-3) and +(-1,1) .. (0,0) .. controls +(1,-1) and +(0,3) .. (3,-6)};
          \fill [bgd] \mypath -- (6,-6) -- (6,6) -- cycle;
          \draw [omon,blue] (3,6) .. controls +(0,-3) and +(1,1) .. (0,0);
          \draw [omon,red] (0,0) .. controls +(-1,-1) and +(0,3) .. (-3,-6);
          \draw [olax,purp] (-3,7) -- \mypath -- (3,-8);
        \end{scope}
        \draw [bound] (-6,-6) rectangle (6,6);
        \node [label,blue,above] at (3,6) {$\mT_2$};
        \node [label,red,below] at (-3,-6) {$\mT_1$};
      \end{tikzpicture}
      \eqq
      \begin{tikzpicture}[scale=1.3]
        \begin{scope}
          \clip [boundc] (-7,-6) rectangle (7,6);
          \fill [bg] (-7,-6) rectangle (7,6);
          \renewcommand{\mypatha}{(-3,7) -- (-3,6) .. controls +(0,-3) and +(-1,1) .. (0,0) .. controls +(1,-1) and +(0,3) .. (3,-6) -- (3,-7)};
          \renewcommand{\mypathb}{(4.5,6) -- (4.5,5) .. controls +(0,-5) and +(0,5) .. (-1.5,-6)};
          \renewcommand{\mypathc}{(-4.5,-6) -- (-4.5,-5) .. controls +(0,5) and +(0,-5) .. (1.5,6)};
          \fill [em=red] \mypathb -- \mypathc -- cycle;
          \draw [oem,red] \mypathb;
          \draw [oem,red] \mypathc;
          \begin{scope}
            \clip \mypatha -- (8,-6) -- (8,6) -- cycle;
            \fill [bgd] (-8,-6) rectangle (8,6);
            \fill [em=blue] \mypathb -- \mypathc -- cycle;
            \draw [oem,blue] \mypathb;
            \draw [oem,blue] \mypathc;
          \end{scope}
          \draw [olax,purp] \mypatha;
        \end{scope}
        \draw [bound] (-7,-6) rectangle (7,6);
      \end{tikzpicture}
    }
  \]
  
  \noindent
  Likewise for specializations we have
  \[\preq
    \mathclap{
      \begin{tikzpicture}
        \node [ob] (s) at (-7,0) {$\oS_1$};
        \node [ob] (m) at (0,-7) {$\oS_1$};
        \node [ob] (t) at (7,0) {$\oS_2$};
        \draw [a] (s) -- node[arr,above] {$\fF_1$} (t);
        \draw [a] (s) -- node[arr,bl] {$\mT_1$} (m);
        \draw [a] (m) -- node[arr,br] {$\fF_2$} (t);
        \node [cell] at (0,-2.5) {$\sigma$};
      \end{tikzpicture}
      \eqq
      \begin{tikzpicture}
        \node [ob] (ss) at (-21,7) {$\oS_1$};
        \node [ob] (s) at (-14,0) {$\Alg{\mT_1}{\oS_1}$};
        \node [ob] (t) at (0,0) {$\Alg{\mT_2}{\oS_2}$};
        \node [ob] (d) at (0,-7) {$\oS_1$};
        \node [ob] (u) at (0,7) {$\oS_1$};
        \node [ob] (tt) at (10.5,0) {$\oS_2$};
        \draw [eq] (ss) .. controls +(7,4) and +(-7,4) .. (u);
        \draw [a] (ss) -- node[arr,bl] {$\mlcar{\mT_1}$} (s);
        \draw [a] (s) .. controls +(4,6) and +(-5,0) .. node[arr,alcl] {$\mcar{\mT_1}$} (u);
        \draw [a] (s) .. controls +(4,-6) and +(-5,0) .. node[arr,bl] {$\mcar{\mT_1}$} (d);
        \draw [a] (u) -- node[arr,ar] {$\fF_1$} (tt);
        \draw [a] (d) -- node[arr,br] {$\fF_2$} (tt);
        \draw [a] (t) -- node[arr,acl,pos=.4] {$\mcar{\mT_2}$} (tt);
        \draw [a] (s) .. controls +(5,3) and +(-5,3) .. node[arr,acl,pos=.8] {$\emlift{\fF_1}$} (t);
        \draw [a] (s) .. controls +(5,-3) and +(-5,-3) .. node[arr,bcl,pos=.8] {$\emlift{\fF_2}$} (t);
        \node [cell] at (-7,0) {$\emlift{\sigma}$};
        \node [cell] at (-14.25,6) {$\eta$};
        \node [cell] at (0,3.5) {$=$};
        \node [cell] at (0,-4) {$=$};
      \end{tikzpicture}
    }
    \qqq
    \mathclap{
      \begin{tikzpicture}[scale=1.3]
        \begin{scope}
          \clip [boundc] (-6,-6) rectangle (6,6);
          \fill [bg] (-6,-6) rectangle (6,6);
          \node [tspech,red] at (0,0) {};
          \draw [omon,red] (-.5,0) .. controls +(-1,-1) and +(0,3.5) .. (-3.5,-6);
          \begin{scope}
            \clip (0,7) -- (0,-7) -- (6,-6) -- (6,6) -- cycle;
            \fill [bgd] (-6,-6) rectangle (6,6);
            \node [tspech,blue] at (0,0) {};
          \end{scope}
          \draw [olax,purp] (0,7) -- (0,0);
          \draw [olax,dpurp] (0,0) -- (0,-8);
        \end{scope}
        \draw [bound] (-6,-6) rectangle (6,6);
        \node [tspec] at (0,0) {};
        \node [label,purp,above] at (0,6) {$\fF_1$};
        \node [label,dpurp,below] at (0,-6) {$\fF_2$};
        \node [label,above=4.5,left=5] at (0,0) {$\sigma$};
      \end{tikzpicture}
      \eqq
      \begin{tikzpicture}[scale=1.3]
        \begin{scope}
          \clip [boundc] (-8,-6) rectangle (6,6);
          \fill [bg] (-8,-6) rectangle (6,6);
          \renewcommand{\mypatha}{};
          \renewcommand{\mypathb}{(-5.5,-6) -- (-5.5,-1) .. controls +(0,8) and +(0,3.5) .. (2.75,0) .. controls +(0,-3.5) and +(0,3) .. (-2.5,-6)};
          \fill [em=red] \mypathb -- cycle;
          \draw [oem,red] \mypathb;
          \begin{scope}
            \clip (0,7) -- (0,-7) -- (8,-6) -- (8,6) -- cycle;
            \fill [bgd] (-8,-6) rectangle (8,6);
            \fill [em=blue] \mypathb -- cycle;
            \draw [oem,blue] \mypathb;
          \end{scope}
          \draw [olax,purp] (0,7) -- (0,0);
          \draw [olax,dpurp] (0,0) -- (0,-8);
          \node [tspec] at (0,0) {};
        \end{scope}
        \draw [bound] (-8,-6) rectangle (6,6);
      \end{tikzpicture}
    }
  \]
  
  In summary, the results accumulated thus far allow for the expansion
  of monads $\mT$ admitting algebra objects in string diagrams as
  regions of $\Alg{\mT}{\oS}$ bordered by $\mlcar{\mT}$ and
  $\mcar{\mT}$. This applies to not only the monads themselves, but
  also their crossings through monad lax 1-cells, as well as the monad
  films drawn on modules, module maps, and specializations.
\end{convention}

\pagebreak

\chapter{Double categories}\label{sec:doublemonads}

\begin{definition}
  A \emph{monad colax 1-cell} is defined by flipping all of the
  diagrams in \cref{def:monadone} horizontally.\vspace{-7.5pt}
  \[\preq
    \mathclap{\fG \colon \oS_1 \to \oS_2}
    \qqq
    \mathclap{
      \begin{tikzpicture}[xscale=-1,yscale=.5]
        \begin{scope}
          \clip [boundc] (-6,-6) rectangle (6,6);
          \fill [bg] (-6,-6) rectangle (6,6);
          \fill [bgd] (-6,-6) rectangle (0,6);
          \draw [oclax,orange] (0,7) -- (0,-9);
        \end{scope}
        \draw [bound] (-6,-6) rectangle (6,6);
        \node [label] at (-3,0) {$\oS_2$};
        \node [label] at (3,0) {$\oS_1$};
        \node [label,orange,above] at (0,6) {$\fG$};
        \node [label,below] at (0,-6) {$\phantom{\fG}$};
      \end{tikzpicture}
    }
  \]
  \vspace{-15pt}
  \[\preq
    \mathclap{
      \begin{tikzpicture}[yscale=.875,xscale=-.875]
        \node [ob] (s) at (-7,0) {$\oS_2$};
        \node [ob] (t) at (7,0) {$\oS_1$};
        \node [ob] (u) at (0,7) {$\oS_1$};
        \node [ob] (d) at (0,-7) {$\oS_2$};
        \draw [ab] (s) -- node[arr,ar] {$\fG$} (u);
        \draw [ab] (u) -- node[arr,al] {$\mT_1$} (t);
        \draw [ab] (s) -- node[arr,br] {$\mT_2$} (d);
        \draw [ab] (d) -- node[arr,bl] {$\fG$} (t);
        \node [cell] at (0,0) {$\xi$};
      \end{tikzpicture}
    }
    \qqq
    \mathclap{
      \begin{tikzpicture}[xscale=-1]
        \begin{scope}
          \clip [boundc] (-6,-6) rectangle (6,6);
          \fill [bg] (-6,-6) rectangle (6,6);
          \renewcommand{\mypath}{(-3,6) .. controls +(0,-3) and +(-1,1) .. (0,0) .. controls +(1,-1) and +(0,3) .. (3,-6)};
          \fill [bgd] \mypath -- (-6,-6) -- (-6,6) -- cycle;
          \draw [omon,red] (3,6) .. controls +(0,-3) and +(1,1) .. (0,0);
          \draw [omon,blue] (0,0) .. controls +(-1,-1) and +(0,3) .. (-3,-6);
          \draw [oclax,orange] (-3,7) -- \mypath -- (3,-8);
        \end{scope}
        \draw [bound] (-6,-6) rectangle (6,6);
        \node [label,red,above] at (3,6) {$\mT_1$};
        \node [label,blue,below] at (-3,-6) {$\mT_2$};
      \end{tikzpicture}
    }
  \]
  \vspace{-5pt}
  \[\preq
    \mathclap{
      \begin{tikzpicture}[scale=.825,xscale=-1]
        \node [ob] (s) at (-7,0) {$\oS_2$};
        \node [ob] (t) at (10.5,3.5) {$\oS_1$};
        \node [ob] (u) at (0,7) {$\oS_1$};
        \node [ob] (d) at (3.5,-3.5) {$\oS_2$};
        \node [ob] (tt) at (14,-7) {$\oS_1$};
        \node [ob] (dd) at (7,-14) {$\oS_2$};
        \draw [ab] (s) -- node[arr,ar] {$\fG$} (u);
        \draw [ab] (u) -- node[arr,above] {$\mT_1$} (t);
        \draw [ab] (s) -- node[arr,below,pos=.3] {$\mT_2$} (d);
        \draw [ab] (d) -- node[arr,bl] {$\fG$} (t);
        \draw [ab] (t) -- node[arr,left] {$\mT_1$} (tt);
        \draw [ab] (d) -- node[arr,right,pos=.7] {$\mT_2$} (dd);
        \draw [ab] (dd) -- node[arr,bl] {$\fG$} (tt);
        \draw [ab] (s) .. controls +(2,-8) and +(-8,2) .. node[arr,br] {$\mT_2$} (dd);
        \node [cell] at (1.75,1.75) {$\xi$};
        \node [cell] at (8.75,-5.25) {$\xi$};
        \node [cell] at (0,-7) {$\mu$};
      \end{tikzpicture}
      \eqq
      \begin{tikzpicture}[scale=.825,xscale=-1]
        \node [ob] (s) at (-7,0) {$\oS_2$};
        \node [ob] (t) at (10.5,3.5) {$\oS_1$};
        \node [ob] (u) at (0,7) {$\oS_1$};
        \node [ob] (tt) at (14,-7) {$\oS_1$};
        \node [ob] (dd) at (7,-14) {$\oS_2$};
        \draw [ab] (s) -- node[arr,ar] {$\fG$} (u);
        \draw [ab] (u) -- node[arr,above] {$\mT_1$} (t);
        \draw [ab] (t) -- node[arr,left] {$\mT_1$} (tt);
        \draw [ab] (dd) -- node[arr,bl] {$\fG$} (tt);
        \draw [ab] (u) .. controls +(2,-8) and +(-8,2) .. node[arr,br,pos=.8] {$\mT_1$} (tt);
        \draw [ab] (s) .. controls +(2,-8) and +(-8,2) .. node[arr,br] {$\mT_2$} (dd);
        \node [cell] at (.5,-5) {$\xi$};
        \node [cell] at (7,0) {$\mu$};
      \end{tikzpicture}
    }
    \qqq
    \mathclap{
      \begin{tikzpicture}[xscale=-.7894,yscale=.7]
        \begin{scope}
          \clip [boundc] (-10,-14) rectangle (9,10);
          \fill [bg] (-10,-14) rectangle (9,10);
          \renewcommand{\mypath}{(-5,10) -- (-5,9) .. controls +(0,-3) and +(-1,1) .. (-2,3) -- (1,0) .. controls +(2,-2) and +(0,3) .. (4,-8) -- (4,-14)};
          \fill [bgd] \mypath -- (-10,-14) -- (-10,10) -- cycle;
          \draw [omon,red] (1,10) -- (1,9) .. controls +(0,-3) and +(1,1) .. (-2,3);
          \draw [omon,red] (5.5,10) -- (5.5,9) .. controls +(0,-5) and +(1,1) .. (1,0);
          \draw [omon,blue] (-2,3) .. controls +(-1,-1) and +(0,5) .. (-6.5,-6) .. controls +(0,-3) and +(-1,1) .. (-4.25,-11);
          \draw [omon,blue] (1,0) .. controls +(-1,-1) and +(0,3) .. (-2,-6) .. controls +(0,-3) and +(1,1) .. (-4.25,-11);
          \draw [omon,blue] (-4.25,-11) -- (-4.25,-14);
          \node [tmon,blue] at (-4.25,-11) {};
          \draw [oclax,orange] (-5,11) -- \mypath -- (5,-16);
        \end{scope}
        \draw [bound] (-10,-14) rectangle (9,10);
      \end{tikzpicture}
      \eqq
      \begin{tikzpicture}[xscale=-1,yscale=.7]
        \begin{scope}
          \clip [boundc] (-7.5,-13) rectangle (7.5,11);
          \fill [bg] (-7.5,-14) rectangle (7.5,11);
          \renewcommand{\mypath}{(-3.5,11) -- (-3.5,5) .. controls +(0,-5) and +(-1,1) .. (0,-4.5) .. controls +(1,-1) and +(0,5) .. (3.5,-13) -- (3.5,-14)};
          \fill [bgd] \mypath -- (-7.5,-14) -- (-7.5,11) -- cycle;
          \draw [omon,red] (1,11) -- (1,9) .. controls +(0,-3.5) and +(-1,1) .. (2.75,2);
          \draw [omon,red] (4.5,11) --(4.5,9) .. controls +(0,-3.5) and +(1,1) .. (2.75,2);
          \draw [omon,red] (2.75,2) .. controls +(0,-2.5) and +(1,1) .. (0,-4.5);
          \node [tmon,red] at (2.75,2) {};
          \draw [omon,blue] (0,-4.5) .. controls +(-1,-1) and +(0,4) .. (-3.5,-13) -- (-3.5,-14);
          \draw [oclax,orange] (-3.5,12) -- \mypath -- (3.5,-16);
        \end{scope}
        \draw [bound] (-7.5,-13) rectangle (7.5,11);
      \end{tikzpicture}
    }
  \]
  \vspace{-10pt}
  \[\preq
    \mathclap{
      \begin{tikzpicture}[scale=.825,xscale=-1]
        \node [ob] (s) at (-7,0) {$\oS_2$};
        \node [ob] (u) at (0,7) {$\oS_1$};
        \node [ob] at (0,7) {$\phantom{\oS_1}$};
        \node [ob] (tt) at (14,-7) {$\oS_1$};
        \node [ob] (dd) at (7,-14) {$\oS_2$};
        \draw [ab] (s) -- node[arr,ar] {$\fG$} (u);
        \draw [ab] (dd) -- node[arr,bl] {$\fG$} (tt);
        \draw [ab] (s) .. controls +(2,-8) and +(-8,2) .. node[arr,br] {$\mT_2$} (dd);
        \draw [eq] (u) .. controls +(8,-2) and +(-2,8) .. (tt);
        \draw [eq] (s) .. controls +(8,-2) and +(-2,8) .. (dd);
        \node [cell] at (5.75,-1.25) {$=$};
        \node [cell] at (-.25,-7.25) {$\eta$};
      \end{tikzpicture}
      \eqq
      \begin{tikzpicture}[scale=.825,xscale=-1]
        \node [ob] (s) at (-7,0) {$\oS_2$};
        \node [ob] (u) at (0,7) {$\oS_1$};
        \node [ob] (tt) at (14,-7) {$\oS_1$};
        \node [ob] (dd) at (7,-14) {$\oS_2$};
        \draw [ab] (s) -- node[arr,ar] {$\fG$} (u);
        \draw [ab] (dd) -- node[arr,bl] {$\fG$} (tt);
        \draw [ab] (u) .. controls +(2,-8) and +(-8,2) .. node[arr,br,pos=.8] {$\mT_1$} (tt);
        \draw [eq] (u) .. controls +(8,-2) and +(-2,8) .. (tt);
        \draw [ab] (s) .. controls +(2,-8) and +(-8,2) .. node[arr,br] {$\mT_2$} (dd);
        \node [cell] at (.5,-5) {$\xi$};
        \node [cell] at (6.75,-.25) {$\eta$};
      \end{tikzpicture}
    }
    \qqq
    \mathclap{
      \begin{tikzpicture}[xscale=-1,yscale=.7]
        \begin{scope}
          \clip [boundc] (-7.5,-13) rectangle (7.5,8);
          \fill [bg] (-7.5,-14) rectangle (7.5,11);
          \renewcommand{\mypath}{(-3.5,11) -- (-3.5,8) .. controls +(0,-5) and +(-1,1) .. (0,-.5) .. controls +(1,-1) and +(0,5) .. (3.5,-9.5) -- (3.5,-14)};
          \fill [bgd] \mypath -- (-7.5,-14) -- (-7.5,11) -- cycle;
          \draw [omon,blue] (-3.25,-7.5) -- (-3.25,-14);
          \node [tmon,blue] at (-3.25,-7.5) {};
          \draw [oclax,orange] (-3.5,12) -- \mypath -- (3.5,-16);
        \end{scope}
        \draw [bound] (-7.5,-13) rectangle (7.5,8);
      \end{tikzpicture}
      \eqq
      \begin{tikzpicture}[xscale=-1,yscale=.7]
        \begin{scope}
          \clip [boundc] (-7.5,-13) rectangle (7.5,8);
          \fill [bg] (-7.5,-14) rectangle (7.5,11);
          \renewcommand{\mypath}{(-3.5,11) -- (-3.5,5) .. controls +(0,-5) and +(-1,1) .. (0,-4.5) .. controls +(1,-1) and +(0,5) .. (3.5,-13) -- (3.5,-14)};
          \fill [bgd] \mypath -- (-7.5,-14) -- (-7.5,11) -- cycle;
          \draw [omon,red] (2.75,2) .. controls +(0,-2.5) and +(1,1) .. (0,-4.5);
          \node [tmon,red] at (2.75,2) {};
          \draw [omon,blue] (0,-4.5) .. controls +(-1,-1) and +(0,4) .. (-3.5,-13) -- (-3.5,-14);
          \draw [oclax,orange] (-3.5,12) -- \mypath -- (3.5,-16);
        \end{scope}
        \draw [bound] (-7.5,-13) rectangle (7.5,8);
      \end{tikzpicture}
    }
    \afterimage
    \vspace{2.5pt}
  \]
  
  That is, a monad colax 1-cell between monads $\mT_1$
  and $\mT_2$ is a monad lax 1-cell in $\tX\op$ between the
  monads therein corresponding to $\mT_2$ and $\mT_1$.
\end{definition}

Likewise there are notions of monad 2-cell and monad specialization
between monad colax 1-cells, obtained by flipping the ones for monad
lax 1-cells. But there are also more general notions of monad 2-cell
and monad specialization that involve both lax and colax 1-cells at
once.

\begin{definition}\label{def:doublemon}
  A \emph{monad 2-cell}\footnotemark\ from lax $\fF_1 \colon \mT_\dW \to \mT_\dN$
  and colax $\fG_1 \colon \mT_\dN \to \mT_\dE$ to colax
  $\fG_2 \colon \mT_\dW \to \mT_\dS$ and lax
  $\fF_2 \colon \mT_\dS \to \mT_\dE$ is a 2-cell \vspace{-25pt}
  \[\preq
    \mathclap{

    }
  \]
  \afterimage
\end{definition}
\vspace{-10pt}

\footnotetext{These definitions of monad 2-cell and monad
  specialization make sense in an implicit 2-category.}

Note that this recovers \cref{def:monadone} when the colax 1-cells are
identities.

\begin{proposition}\label{prop:doublemonads}
  Let $\tX$ be a 2-category.
  \begin{enumerate}[label=(\roman*)]
  \item\label{item:basicdouble} There is a double category
    $\MMnd(\tX)$ in which the 0-cells are monads, the 1-cells are lax
    and colax 1-cells, and the 2-cells\footnote{As with 2-categories,
      we use \emph{2-cell} to mean a two-dimensional cell in a double
      category. Some authors call this a \emph{double cell} or simply
      a \emph{cell}.} are monad 2-cells.
  \item\label{item:specdouble}
    There is a double category $\EEM(\tX)$ in which the 0-cells are
    monads, the 1-cells are lax and colax 1-cells, and the 2-cells are
    monad specializations.
  \end{enumerate}
\end{proposition}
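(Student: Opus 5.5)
The plan is to build both double categories by hand, pasting in $\tX$, so that all associativity and unit axioms are inherited from the underlying 2-category. \emph{Objects} are monads. \emph{Lax 1-cells} compose by composing underlying 1-cells and pasting the structure cells: for $\fF \colon \mT_1 \to \mT_2$ and $\fF' \colon \mT_2 \to \mT_3$, set $\chi_{\fF \then \fF'} := (\fF \then \chi') \cdot (\chi \then \fF')$. The identity is $\id_{\oS}$ with identity structure cell. In string diagrams the monad strand simply crosses both membranes in turn, and the lax 1-cell laws for the composite follow by sliding $\mu$ and $\eta$ across one membrane and then the other. Colax 1-cells are handled by the horizontally flipped argument, as in the definition of monad colax 1-cell (equivalently, by applying the lax case in $\tX\op$). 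Strict associativity and unitality of both 1-cell compositions come directly from those in $\tX$, since the structure cells paste associatively.

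For \labelcref{item:basicdouble}, the 2-cells are monad 2-cells in the sense of \cref{def:doublemon}. I read these as 2-cells $\fF_1 \then \fG_1 \Rightarrow \fG_2 \then \fF_2$ compatible with the crossings of monads through $\chi$ and $\xi$. Horizontal composition of two such squares sharing a colax edge is ordinary pasting in $\tX$ along the shared 1-cell; vertical composition along a shared lax edge is likewise. The key step is to check that the pasted 2-cell again satisfies the compatibility law. This amounts to sliding the monad strand through one square and then the other, using the law for each square together with the definition of the composite $\chi$ or $\xi$ given above. Identity squares are identity 2-cells. Associativity, unit laws, and interchange hold because they hold for pasting in $\tX$. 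Via \cref{rem:transfors}, this is the instance for $\tDelta \to \tX$ of the double category of functors, lax and colax transformations, and modifications, which serves as a consistency check.

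For \labelcref{item:specdouble}, the 2-cells are specializations, which carry an extra $\mT$ strand stuck to the node; for instance the output may be $\fG_2 \then \mT_\dS \then \fF_2$, or a variant. Composition must then merge the stray monad strands using $\mu$, as in Kleisli composition. Concretely, to compose two squares vertically I would paste them, slide the first square's $\mT$ strand across the intervening lax or colax membrane using $\chi$ or $\xi$, and then multiply it with the second square's strand. The identity specialization uses $\eta$.

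I expect the main obstacle to be verifying the specialization law for composites, together with interchange, in \labelcref{item:specdouble}. These require combining monad associativity, the $\chi$/$\xi$ laws, and the specialization laws of both squares, and the order of sliding matters. Unitality will use the unit laws for $\mu$ and $\eta$, and associativity will use associativity of $\mu$ together with the multiplication law of the crossings. To keep this manageable I will argue in string diagrams, where each check becomes a topological deformation: slide nodes through membranes, then merge $\mT$ strands via the convention for $n$-ary $\mu$. If the diagrammatic checks become unwieldy, a fallback is to embed $\tX$ into $[\tX\op,\Cat]$ via Yoneda, where algebra objects exist. By \cref{prop:formaltfae} and \cref{rem:emarrow} (suitably extended to colax cells), specializations then become plain 2-cells between lifted 1-cells, so the double-category axioms are inherited from a double category of squares.
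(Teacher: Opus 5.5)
Your proposal matches the paper's proof. Both double categories are built by pasting in $\tX$, with composite (co)lax structure cells obtained by stacking the crossings. Composite specializations are formed by sliding the stray monad strand across the intervening membrane via $\chi$ or $\xi$ and merging with $\mu$. Identity specializations come from units as in \cref{rem:tospec}. The two nontrivial checks (that composites are specializations, and interchange) are done by string-diagram manipulation.

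The Yoneda fallback is unnecessary. It would also need more than \cref{prop:formaltfae} as stated, since colax 1-cells lift to opalgebra objects rather than algebra objects, but your main line is the paper's argument.
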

\begin{proof}
  For \labelcref{item:basicdouble}, all compositions are given by
  composing underlying cells in the 2-category $\tX$. The
  structure 2-cells of composite monad lax 1-cells are
  given by
  
  \[\preq
    \mathclap{

    }
    \afterimage
  \]

  \noindent
  and the structure 2-cells of identities are identities. Colax
  1-cells are dual. It is straightforwardly checked that compositions
  of (co)lax 1-cells are again (co)lax 1-cells, and
  likewise for monad 2-cells. The various associativity and unit laws
  in $\MMnd(\tX)$ follow from those in $\tX$.

  \pagebreak
  
  For \labelcref{item:specdouble}, the 1-cell structure is identical
  to that of \labelcref{item:basicdouble}. Composite
  specializations are given in the lax direction by\vspace{-5pt}
  
  \[\preq
    \mathclap{
      %
    }
  \]

  \noindent
  and identity specializations are obtained from identity 2-cells by
  composing with monad units as in \cref{rem:tospec}. The following
  string diagram manipulation witnesses that composite specializations
  are specializations (uses of associativity omitted):\vspace{-6pt}
  
  \[
    %
    \afterimage
    \vspace{6.25pt}
  \]

  \noindent
  The colax direction is dual. Associativity and unit laws for
  specializations also follow straightforwardly from those for monads
  and the (co)lax 1-cell laws, and the following string diagram
  manipulation witnesses the interchange law (uses of associativity
  omitted):\vspace{-6pt}
  
  \[
    %
  \]
\end{proof}

\begin{remark}
  There is an identity-on-1-cells functor $\MMnd(\tX) \to \EEM(\tX)$
  given by \vspace{-27pt}
  \[\preq
    \mathclap{
      %
    }
  \]
  \vspace{-22.5pt}
  
  \noindent
  In fact, an arbitrary 2-cell
  $\gamma \colon \fF_1 \then \fG_1 \Rightarrow \fG_2 \then \fF_2$ is a
  monad 2-cell if and only if its composite with the unit $\eta$, as
  shown above, is a monad specialization. Thus monad 2-cells are
  equivalently monad specializations equipped with the data of a
  factoring through the unit.
\end{remark}

\begin{remark}
  If the carrier of a monad lax 1-cell has a left adjoint 1-cell, then
  this left adjoint has the structure of a monad colax 1-cell.
  \[\preq \mathclap{
      %
    }
    \vspace{-5pt}
  \]
  \afterimage
  
  \noindent
  Dually, if the carrier of a monad colax 1-cell has a right
  adjoint 1-cell, then the right adjoint has the structure of a monad lax
  1-cell.
  \[\preq
    \mathclap{
      %
    }
    \vspace{-5pt}
  \]
  \afterimage
  
  Now consider lax $\fF_1$ and $\fF_2$ and colax $\fG_1$ and $\fG_2$
  as in \cref{def:doublemon}. If $\fG_1$ and $\fG_2$ have right
  adjoints $\fR_1$ and $\fR_2$, then a monad 2-cell or specialization
  from $\fF_1$ and $\fG_1$ to $\fG_2$ and $\fF_2$ is equivalent to a
  monad 2-cell or specialization from $\fR_2 \then \fF_1$ to
  $\fF_2 \then \fR_1$, which involves only monad lax 1-cells. (Dually,
  if $\fF_1$ and $\fF_2$ have left adjoints $\fL_1$ and $\fL_2$, then
  a monad 2-cell or specialization from $\fF_1$ and $\fG_1$ to $\fG_2$
  and $\fF_2$ is equivalent to a monad 2-cell or specialization from
  $\fG_1 \then \fL_2$ to $\fL_1 \then \fG_2$, which involves only
  colax 1-cells.) Thus the more general concepts of monad 2-cell and
  specialization from \cref{def:doublemon} reduce to the concepts from
  \cref{def:monadone} in the special case that appropriate adjoints
  exist.
\end{remark}

\begin{remark}\label{rem:generaldouble}
  Just as monad lax 1-cells and their monad 2-cells are instances of
  lax transformations and their modifications as discussed in
  \cref{rem:transfors}, monad colax 1-cells and their monad 2-cells
  are instances of colax transformations and their modifications. The
  more general monad 2-cells involving both lax and colax 1-cells are
  instances of more general modifications involving both lax and colax
  transformations. Writing $\tDelta$ for the free 2-category
  containing a monad on an object, the double category $\MMnd(\tX)$ is
  precisely the double category $\HHom(\tDelta, \tX)$ of functors, lax
  and colax transformations, and modifications defined
  in~\cite[Proposition A.8]{fairbanks-shulman}.
  
  The construction of $\HHom(\tX, \tY)$ actually makes sense in the
  generality that $\tX$ and $\tY$ are weak 2-categories (a.k.a.\
  bicategories), in which case we obtain a \emph{doubly weak double
    category}~\cite[Definition 3.3]{fairbanks-shulman} rather than a
  strict double category.\footnote{In fact, the construction of
    $\HHom(\tX, \tY)$ in~\cite[Proposition A.8]{fairbanks-shulman} is
    stated in the generality that $\tX$ and $\tY$ are implicit
    2-categories. An implicit 2-category is more general than a weak
    2-category, but simultaneously is an instance of a strict
    2-category. Through this framework, various constructions that
    make sense in strict 2-categories make sense without alteration in
    weak 2-categories.} Likewise, the above constructions of
  $\MMnd(\tX)$ and $\EEM(\tX)$ actually make sense in the same
  generality. In particular, for $\tX$ a weak 2-category, $\EEM(\tX)$
  is a novel example of a doubly weak double category.
  
  Some general facts about $\HHom(\tX, \tY)$ are as follows: the
  conjoint pairs\footnote{\emph{Companions} and \emph{conjoints} are
    double-categorical analogues of
    adjoints~\cite{grandis-pare,shulman:frbi}.} are the mate pair lax
  and colax transformations, and the companion pairs are (up to
  isomorphism) the componentwise inverse lax and colax transformations
  (a.k.a.\ \emph{pseudonatural transformations})~\cite[Remark
  A.9]{fairbanks-shulman}. In particular this yields descriptions of
  companions and conjoints in $\MMnd(\tX) = \HHom(\tDelta, \tX)$.
\end{remark}



\begin{remark}\label{rem:clarke-dimeglio}
  The double category $\MMnd(\tX)$ for a 2-category $\tX$ can also be
  seen as a special case of the double category $\MMnd(\DD)$ for a
  double category $\DD$ from~\cite[Definition
  2.4]{fiore-gambino-kock}, in which objects are loose monads in
  $\DD$, loose 1-cells are lax 1-cells in the underlying loose (weak)
  2-category $\Loose(\DD)$, tight 1-cells are \emph{monad tight
    1-cells}, and 2-cells satisfy conditions mate\footnote{By this we
    mean conditions obtained from those of \cref{def:doublemon} by
    replacing the top right and bottom left 1-cells with tight
    1-cells, as one does when ``bending'' loose 1-cells into their
    companion tight 1-cells according to the mate correspondence for
    companions in double categories.}  to those
  from~\cref{def:doublemon} for monad 2-cells.  \vspace{-15pt}
  \[\preq
    \mathclap{
      \begin{tikzpicture}[scale=.875]
        \path (0,18) -- (0,-18);
        \node [ob] (s) at (-7,7) {$\oS_\dNW$};
        \node [ob] (t) at (7,-7) {$\oS_\dSE$};
        \node [ob] (u) at (7,7) {$\oS_\dNE$};
        \node [ob] (d) at (-7,-7) {$\oS_\dSW$};
        \draw [a] (s) -- node[arr,above] {$\fF_1$} (u);
        \draw [a] (u) -- node[arr,right] {$\fG_1$} (t);
        \draw [a] (s) -- node[arr,left] {$\fG_2$} (d);
        \draw [a] (d) -- node[arr,below] {$\fF_2$} (t);
        \node [cell] at (0,0) {$\gamma$};
      \end{tikzpicture}
    }
    \qqq
    \mathclap{
      \begin{tikzpicture}[scale=1.5]
        \begin{scope}
          \clip [boundc] (-6,-6) rectangle (6,6);
          \fill [bg] (-6,-6) rectangle (6,6);
          \renewcommand{\mypatha}{(-2,7) -- (-2,6) .. controls +(0,-3) and +(-1,1) .. (0,0)};
          \renewcommand{\mypathb}{(0,0) .. controls +(1,-1) and +(0,3) .. (2,-6) -- (2,-7)};
          \renewcommand{\mypathc}{(7,0) -- (0,0)};
          \renewcommand{\mypathd}{(0,0) -- (-7,0)};
          \begin{scope}
            \clip \mypatha -- \mypathb -- (-6,-6) -- (-6,6) -- cycle;
            \begin{scope}
              \clip \mypathc -- \mypathd -- (-6,6) -- (6,6) -- cycle;
              \fill [wback] (-6,-6) rectangle (6,6);
            \end{scope}
            \begin{scope}
              \clip \mypathc -- \mypathd -- (-6,6) -- (6,6) -- cycle;
              \fill [sback] (-6,-6) rectangle (6,6);
            \end{scope}
          \end{scope}
          \begin{scope}
            \clip \mypatha -- \mypathb -- (6,-6) -- (6,6) -- cycle;
            \begin{scope}
              \clip \mypathc -- \mypathd -- (-6,6) -- (6,6) -- cycle;
              \fill [nback] (-6,-6) rectangle (6,6);
            \end{scope}
            \begin{scope}
              \clip \mypathc -- \mypathd -- (-6,-6) -- (6,-6) -- cycle;
              \fill [eback] (-6,-6) rectangle (6,6);
            \end{scope}
          \end{scope}
          \draw [olax,purp] \mypatha;
          \draw [olax,dpurp] \mypathb;
          \draw [oclax,orange] \mypathc;
          \draw [oclax,dorange] \mypathd;
          \node [tdmodis] at (0,0) {};
        \end{scope}
        \draw [bound] (-6,-6) rectangle (6,6);
        \node [label] at (2.5,3.25) {$\oS_{\dNE}$};
        \node [label] at (3.85,-2) {$\oS_{\dSE}$};
        \node [label] at (-3.85,2) {$\oS_{\dNW}$};
        \node [label] at (-2.5,-3.25) {$\oS_{\dSW}$};
        \node [label,purp,above] at (-2,6) {$\fF_1$};
        \node [label,orange,right] at (6,0) {$\fG_1$};
        \node [label,dorange,left] at (-6,0) {$\fG_2$};
        \node [label,dpurp,below] at (2,-6) {$\fF_2$};
        \node [label] at (0,0) {$\gamma$};
      \end{tikzpicture}
    }
  \]
  \vspace{-20pt}
  
  \noindent
  Any 2-category $\tX$ induces a double category of squares
  $\SSq(\tX)$ (a.k.a.\ \emph{quintets}), and $\MMnd(\tX)$ is precisely
  $\MMnd(\SSq(\tX))$, by direct comparison of definitions. There is
  also a double category $\EEM(\DD)$, originally mentioned (but not
  explicitly defined) in~\cite[Chapter 7]{clarke}, which has the same
  objects and 1-cells as $\MMnd(\DD)$, and 2-cells satisfying
  conditions mate to those from~\cref{def:doublemon} for
  specializations. Hence $\EEM(\tX)$ is
  $\EEM(\SSq(\tX))$.
  \vspace{-15pt}
  \[\preq
    \mathclap{
      \begin{tikzpicture}[scale=.875,xscale=2]
        \path (0,18) -- (0,-18);
        \node [ob] (s) at (-7,7) {$\oS_\dNW$};
        \node [ob] (t) at (7,-7) {$\oS_\dSE$};
        \node [ob] (u) at (7,7) {$\oS_\dNE$};
        \node [ob] (d) at (-7,-7) {$\oS_\dSW$};
        \node [ob] (m) at (0,-7) {$\oS_\dSW$};
        \draw [a] (s) -- node[arr,above] {$\fF_1$} (u);
        \draw [a] (u) -- node[arr,right] {$\fG_1$} (t);
        \draw [a] (s) -- node[arr,left] {$\fG_2$} (d);
        \draw [a] (d) -- node[arr,below] {$\mT_{\dSW}$} (m);
        \draw [a] (m) -- node[arr,below] {$\fF_2$} (t);
        \node [cell] at (0,0) {$\sigma$};
      \end{tikzpicture}
    }
    \qqq
    \mathclap{
      \begin{tikzpicture}[scale=1.5]
        \begin{scope}
          \clip [boundc] (-6,-6) rectangle (6,6);
          \fill [bg] (-6,-6) rectangle (6,6);
          \renewcommand{\mypatha}{(-2,7) -- (-2,6) .. controls +(0,-3) and +(-1,1) .. (0,0)};
          \renewcommand{\mypathb}{(0,0) .. controls +(1,-1) and +(0,3) .. (2,-6) -- (2,-7)};
          \renewcommand{\mypathc}{(7,0) -- (0,0)};
          \renewcommand{\mypathd}{(0,0) -- (-7,0)};
          \begin{scope}
            \clip \mypatha -- \mypathb -- (-6,-6) -- (-6,6) -- cycle;
            \begin{scope}
              \clip \mypathc -- \mypathd -- (-6,6) -- (6,6) -- cycle;
              \fill [wback] (-6,-6) rectangle (6,6);
              \node [tdspecsh,wwire] at (0,0) {};
            \end{scope}
            \begin{scope}
              \clip \mypathc -- \mypathd -- (-6,-6) -- (6,-6) -- cycle;
              \fill [sback] (-6,-6) rectangle (6,6);
              \node [tdspecsh,swire] at (0,0) {};
            \end{scope}
          \end{scope}
          \begin{scope}
            \clip \mypatha -- \mypathb -- (6,-6) -- (6,6) -- cycle;
            \begin{scope}
              \clip \mypathc -- \mypathd -- (-6,6) -- (6,6) -- cycle;
              \fill [nback] (-6,-6) rectangle (6,6);
              \node [tdspecsh,nwire] at (0,0) {};
            \end{scope}
            \begin{scope}
              \clip \mypathc -- \mypathd -- (-6,-6) -- (6,-6) -- cycle;
              \fill [eback] (-6,-6) rectangle (6,6);
              \node [tdspecsh,ewire] at (0,0) {};
            \end{scope}
          \end{scope}
          \draw [omon,swire] (0,.25) .. controls +(-1,-1) and +(0,3) .. (-2.5,-6);
          \draw [olax,purp] \mypatha;
          \draw [olax,dpurp] \mypathb;
          \draw [oclax,orange] \mypathc;
          \draw [oclax,dorange] \mypathd;
          \node [tdspecs] at (0,0) {};
        \end{scope}
        \draw [bound] (-6,-6) rectangle (6,6);
        \node [label] at (0,0) {$\sigma$};
        \node [label,red,below] at (-2.5,-6) {$\mT_\dSW$};
        \node [label,red,above] at (-2.5,6) {$\phantom{\mT_\dSW}$};
      \end{tikzpicture}
    }
  \]
  \vspace{-15pt}

  Of note are the sub double categories $\MMndret(\DD)$ and
  $\EEMret(\DD)$ of $\MMnd(\tX)$ and $\EEM(\DD)$ with the same
  objects, tight 1-cells, and 2-cells, but in which loose 1-cells are
  \emph{monad retromorphisms}~\cite[Definition
  5.3]{clarke}.\footnote{Though it seems a definition of $\EEM(\DD)$
    does not appear in the literature, $\EEMret(\DD)$ is described
    explicitly in~\cite{dimeglio:talk}.}
  A monad retromorphism is by definition a monad lax 1-cell in
  $\Loose(\DD)$ that is carried by a companion in $\DD$.\footnote{An
    alternative tweak is to define a monad retromorphism directly in
    terms of a tight 1-cell (rather than a companion loose 1-cell) so
    that the structure 2-cell $\chi$ is given by a
    \emph{retrocell}~\cite[Definition 2.1]{pare:retro}. This way we
    get a strict double category $\MMndret(\DD)$ even for $\DD$ that
    is weak in the loose direction.}  In the case that all tight
  1-cells in $\DD$ have companions, a monad tight 1-cell, on the other
  hand, corresponds to a monad colax 1-cell in $\Loose(\DD)$ that is
  carried by a companion in $\DD$.

  All 1-cells are companions in the double category $\SSq(\tX)$, so
  $\MMnd(\tX)$ and $\EEM(\tX)$ may equally well be viewed as
  \[\MMndret(\SSq(\tX)) \simeq \MMnd(\SSq(\tX))\qquad \text{and}
    \qquad \EEMret(\SSq(\tX)) \simeq \EEM(\SSq(\tX)).\] Conversely,
  given a double category $\DD$ in which all tight 1-cells have
  companions, $\MMndret(\DD)$ and $\EEMret(\DD)$ are recovered from
  $\MMnd(\Loose(\DD))$ and $\EEM(\Loose(\DD))$ as the locally full sub
  double categories consisting of 1-cells carried by companions in
  $\DD$.
\end{remark}

This yields the following examples from the work of Clarke and Di
Meglio~\cite{clarke,clarke-dimeglio,dimeglio:talk}.

\begin{example}\label{ex:internal}
  Let $\cS$ be a category with pullbacks. Denote the
  2-category\footnote{Here one can either use a strict 2-category
    equivalent to $\SSpan(\cS)$ as in \cite[Section 2.3]{lack-street}
    (such as the category of left adjoint functors between categories
    $\Set^X$ in the case $\cS = \Set$), use the equipment structure of
    $\SSpan(\cS)$ to define composition of just maps strictly, or work
    with weak 2-categories and doubly weak double categories.} of
  spans in $\cS$ by $\Span(\cS)$.
  \begin{enumerate}[label=(\roman*)]
  \item\label{item:internalflat} The double category of $\cS$-internal
    categories, retrofunctors and functors, and compatible squares
    from~\cite[Chapter 5]{clarke} is equivalent to the locally full
    sub double category of $\MMnd(\Span(\cS))$ consisting of 1-cells
    carried by maps of $\cS$.

  \item\label{item:internal} The double category with the same objects
    and 1-cells but whose 2-cells are \emph{transformations} as
    defined in~\cite{dimeglio:talk} is equivalent to the locally full
    sub double category of $\EEM(\Span(\cS))$ consisting of 1-cells
    carried by maps of $\cS$.
  \end{enumerate}

  In particular, the full sub double categories of
  $\MMnd(\Span(\Set))$ and $\EEM(\Span(\Set))$ consisting of 1-cells
  carried by functions are the two double categories of categories,
  functors and retrofunctors from~\cite{clarke}
  and~\cite{clarke-dimeglio}.\footnote{There are also enriched (as
    opposed to internal) versions, involving $\MMat(\cS)$ instead of
    $\SSpan(\cS)$~\cite{dimeglio:talk}.}
\end{example}

\pagebreak
\chapter{Bimodules}

\begin{definition}
  Let $\mT$ be a monad on $\oS$. A \emph{$\mT$-opmodule} (a.k.a.\
  \emph{left $\mT$-module}) is defined by flipping all of the diagrams
  in \cref{def:module} horizontally.
  \vspace{-5pt}
  
  \[\preq
    \mathclap{\bM\colon \oS \to \oX}
    \qqq
    \mathclap{
      \begin{tikzpicture}[yscale=.5,xscale=-1]
        \begin{scope}
          \clip [boundc] (-6,-6) rectangle (6,6);
          \fill [bg] (-6,-6) rectangle (6,6);
          \fill [bgd] (-6,-6) rectangle (0,6);
          \draw [olmods=red] (0,6) -- (0,-6);
        \end{scope}
        \draw [bound] (-6,-6) rectangle (6,6);
        \node [label] at (-3.125,0) {$\oX$};
        \node [label] at (3,0) {$\oS$};
        \node [label,above] at (0,6) {$\bM$};
        \node [label,below] at (0,-6) {$\phantom{\bM}$};
      \end{tikzpicture}
    }
  \]
  \vspace{-12pt}
  \[\preq
    \mathclap{
      \begin{tikzpicture}
        \node [ob] (s) at (-7,0) {$\oS$};
        \node [ob] (m) at (0,7) {$\oS$};
        \node [ob] (t) at (7,0) {$\oX$};
        \draw [a] (s) -- node[arr,al] {$\mT$} (m);
        \draw [a] (m) -- node[arr,ar] {$\bM$} (t);
        \draw [a] (s) -- node[arr,below] {$\bM$} (t);
        \node [cell] at (0,2.5) {$\lambda$};
      \end{tikzpicture}
    }
    \qqq
    \mathclap{
      \begin{tikzpicture}[xscale=-1]
        \begin{scope}
          \clip [boundc] (-6,-6) rectangle (6,6);
          \fill [bg] (-6,-6) rectangle (6,6);
          \renewcommand{\mypath}{(-2.6,6) .. controls +(0,-3.5) and +(-1,1) .. (0,0) -- (0,-6)}
          \fill [bgd] \mypath -- (-6,-6) -- (-6,6) -- cycle;
          \draw [omon,red] (2.6,6) .. controls +(0,-3.5) and +(1,1) .. (0,0);
          \draw [olmods=red] \mypath;
        \end{scope}
        \draw [bound] (-6,-6) rectangle (6,6);
      \end{tikzpicture}
    }
  \]
  \[\preq
    \mathclap{
      \begin{tikzpicture}[xscale=.75,yscale=1.125]
        \node [ob] (s) at (-14,0) {$\oS$};
        \node [ob] (mm) at (-9,7) {$\oS$};
        \node [ob] (m) at (2,7) {$\oS$};
        \node [ob] (t) at (7,0) {$\oX$};
        \draw [a] (s) -- node[arr,al] {$\mT$} (mm);
        \draw [a] (mm) -- node[arr,above] {$\mT$} (m);
        \draw [a] (s) -- node[arr,br] {$\mT$} (m);
        \draw [a] (m) -- node[arr,ar] {$\bM$} (t);
        \draw [a] (s) -- node[arr,below] {$\bM$} (t);
        \node [cell] at (.5,2.5) {$\lambda$};
        \node [cell] at (-7,5) {$\mu$};
      \end{tikzpicture}
      \eqq
      \begin{tikzpicture}[xscale=.75,yscale=1.125]
        \node [ob] (s) at (-7,0) {$\oS$};
        \node [ob] (m) at (-2,7) {$\oS$};
        \node [ob] (mm) at (9,7) {$\oS$};
        \node [ob] (t) at (14,0) {$\oX$};
        \draw [a] (m) -- node[arr,above] {$\mT$} (mm);
        \draw [a] (mm) -- node[arr,ar] {$\bM$} (t);
        \draw [a] (s) -- node[arr,al] {$\mT$} (m);
        \draw [a] (m) -- node[arr,bl] {$\bM$} (t);
        \draw [a] (s) -- node[arr,below] {$\bM$} (t);
        \node [cell] at (-.5,2.5) {$\lambda$};
        \node [cell] at (7,5) {$\lambda$};
      \end{tikzpicture}
    }
    \qqq
    \mathclap{
      \begin{tikzpicture}[xscale=-1]
        \begin{scope}
          \clip [boundc] (-7,-7) rectangle (7,7);
          \fill [bg] (-7,-7) rectangle (7,7);
          \renewcommand{\mypath}{(-4,7) .. controls +(0,-5.5) and +(-1,1) .. (0,-2.5) -- (0,-7)}
          \fill [bgd] \mypath -- (-7,-7) -- (-7,7) -- cycle;
          \draw [omon,red] (0,7) .. controls +(0,-2.5) and +(-1,1) .. (2,2.5);
          \draw [omon,red] (4,7) .. controls +(0,-2.5) and +(1,1) .. (2,2.5);
          \draw [omon,red] (2,2.5) .. controls +(0,-2.5) and +(1,1) .. (0,-2.5);
          \node [tmon,red] at (2,2.5) {};
          \draw [olmods=red] \mypath;
        \end{scope}
        \draw [bound] (-7,-7) rectangle (7,7);
      \end{tikzpicture}
      \eqq
      \begin{tikzpicture}[xscale=-1]
        \begin{scope}
          \clip [boundc] (-7,-7) rectangle (7,7);
          \fill [bg] (-7,-7) rectangle (7,7);
          \renewcommand{\mypath}{(-4,7) .. controls +(0,-2.5) and +(-1,1) .. (-2,2.5) .. controls +(0,-2.5) and +(-1,1) .. (0,-2.5) -- (0,-7)}
          \fill [bgd] \mypath -- (-7,-7) -- (-7,7) -- cycle;
          \draw [omon,red] (0,7) .. controls +(0,-2.5) and +(1,1) .. (-2,2.5);
          \draw [omon,red] (4,7) .. controls +(0,-5.5) and +(1,1) .. (0,-2.5);
          \draw [olmods=red] \mypath;
        \end{scope}
        \draw [bound] (-7,-7) rectangle (7,7);
      \end{tikzpicture}
    }
  \]
  \vspace{-5pt}
  \[\preq
    \mathclap{
      \begin{tikzpicture}[xscale=.75,yscale=1.125]
        \path (0,11) -- (0,-4);
        \node [ob] (s) at (-14,0) {$\oS$};
        \node [ob] (m) at (2,7) {$\oS$};
        \node [ob] (t) at (7,0) {$\oX$};
        \draw [a] (s) -- node[arr,br] {$\mT$} (m);
        \draw [a] (m) -- node[arr,ar] {$\bM$} (t);
        \draw [a] (s) -- node[arr,below] {$\bM$} (t);
        \draw [eq] (m) .. controls +(-8,1.5) and +(1.5,8) .. (s);
        \node [cell] at (.5,2.5) {$\lambda$};
        \node [cell] at (-7,5) {$\eta$};
      \end{tikzpicture}
      \eqq
      \begin{tikzpicture}[xscale=.75,yscale=1.125]
        \path (0,11) -- (0,-4);
        \node [ob] (s) at (-14,0) {$\oS$};
        \node [ob] (t) at (7,0) {$\oX$};
        \draw [a] (s) .. controls +(4,8.5) and +(-4,8.5) .. node[arr,above] {$\bM$} (t);
        \draw [a] (s) -- node[arr,below] {$\bM$} (t);
        \node [cell] at (-3.5,3) {$=$};
      \end{tikzpicture}
    }
    \qqq
    \mathclap{
      \begin{tikzpicture}[xscale=-1]
        \begin{scope}
          \clip [boundc] (-7,-7) rectangle (7,7);
          \fill [bg] (-7,-7) rectangle (7,7);
          \renewcommand{\mypath}{(-2.6,7) .. controls +(0,-5.5) and +(-1,1) .. (0,-2.5) -- (0,-7)}
          \fill [bgd] \mypath -- (-7,-7) -- (-7,7) -- cycle;
          \draw [omon,red] (2,2.5) .. controls +(0,-2.5) and +(1,1) .. (0,-2.5);
          \node [tmon,red] at (2,2.5) {};
          \draw [olmods=red] \mypath;
        \end{scope}
        \draw [bound] (-7,-7) rectangle (7,7);
      \end{tikzpicture}
      \eqq
      \begin{tikzpicture}[xscale=-1]
        \begin{scope}
          \clip [boundc] (-7,-7) rectangle (7,7);
          \fill [bg] (-7,-7) rectangle (7,7);
          \fill [bgd] (-7,-7) rectangle (0,7);
          \draw [olmods=red] (0,7) -- (0,-7);
        \end{scope}
        \draw [bound] (-7,-7) rectangle (7,7);
      \end{tikzpicture}
    }
  \]
  \afterimage
  
  That is, a $\mT$-opmodule is a module in $\tX\op$ with respect to
  the monad therein corresponding to $\mT$.  A \emph{$\mT$-opmodule
    map} is defined similarly (equivalent to a module map of
  corresponding modules).
\end{definition}

\begin{definition}\label{def:bimodule}
  Let $\mT_\dL$ and $\mT_\dR$ be monads on $\oS_\dL$ and $\oS_\dR$. A
  \emph{$(\mT_\dL, \mT_\dR)$-bimodule}\footnote{The definitions of
    bimodule and bimodule map make sense in a virtual 2-category. A
    bimodule between monads can be defined more concisely as a functor
    from the terminal \emph{virtual biactegory}: a virtual 2-category
    with two objects $\oS_\dL$ and $\oS_\dR$ and no 1-cells
    $\oS_\dR \to \oS_\dL$.} consists of a 1-cell
  \[\preq
    \mathclap{\bM\colon \oS_\dL \to \oS_\dR}
    \qqq
    \mathclap{

    }
  \]
  that is both a $\mT_\dL$-opmodule and a $\mT_\dR$-module
  \[\preq
    \mathclap{
      %
    }
  \]
  satisfying the compatibility law
  \[\preq
    \mathclap{
      %
    }
  \]
  
  A \emph{bimodule map} (a.k.a. \emph{modulation}) between $(\mT_\dL,\mT_\dR)$-bimodules
  $\bM_1$ and $\bM_2$ is a 2-cell $\phi \colon \bM_1 \Rightarrow
  \bM_2$ that is both a module map and an opmodule map.
  \[\preq
    \mathclap{
      %
    }
  \]
\end{definition}
  
\begin{convention}
  In string diagrams, we write the 1-cell $\bM$ carrying a
  $(\mT_\dL, \mT_\dR)$-bimodule as a string coated by a film the same color
  as $\mT_\dL$ on the left and a film the same color as $\mT_\dR$ on the
  right. This is to suggest the bimodule laws by topological
  deformation.


  \[
    %
  \]
  \afterimage

  We write a bimodule map as a similarly coated node, to suggest the
  bimodule map laws by topological deformation.
\end{convention}

\begin{remark}
  Both the opmodule structure 2-cell
  and the module structure 2-cell
  of a bimodule are recovered from the unique induced 2-cell
  \[\preq
    \mathclap{
      %
    }
  \]
  by composing with a unit on either side.
  \[\preq
    \mathclap{
      %
    }
  \]
  \afterimage

\end{remark}

We next state an analogue of \cref{prop:formaltfae} for bimodules.

\begin{definition}
  An \emph{opalgebra object} (a.k.a.\ \emph{Kleisli object}) is the
  codomain $\OpAlgTS$ of a universal $\mT$-opmodule $\mkcarT$.
  \vspace{-5pt}
  
  \[\preq
    \mathclap{\mkcarT \colon \oS \to \OpAlgTS}
    \qqq
    \mathclap{
      %
    }
  \]
  \afterimage
  
  That is, an opalgebra object for the monad $\mT$ is a module in
  $\tX\op$ over the monad therein corresponding to $\mT$.
\end{definition}

\begin{convention}
  To distinguish between opalgebra objects and algebra objects, we
  texture the former in string diagrams with $\diagdown$ lines (the
  direction that a monad springs from an opmodule) and the latter with
  $\diagup$ lines (the direction that a monad springs from a module).
\end{convention}

\begin{proposition}\label{prop:bimtfae}
  Let $\mT_\dL$ be a monad on $\oS_\dL$ with opalgebra object
  $\OpAlg{\mT_\dL}{\oS_\dL}$, and let $\mT_\dR$ be a monad on $\oS_\dR$,
  with algebra object $\Alg{\mT_\dR}{\oS_\dR}$. Let
  $\bM\colon \oS_\dL \to \oS_\dR$ be an arbitrary 1-cell.
  
  \begin{enumerate}[label=(\roman*)]
  \item\label{item:bimodone}
    Giving $\bM$ the structure of a $(\mT_\dL, \mT_\dR)$-bimodule
    \vspace{-5pt}
    \[\preq
      \hspace{-\leftmargin}
      \mathclap{
        \begin{tikzpicture}[xscale=.65,yscale=.75]
          \node [ob] (s) at (-15,0) {$\oS_\dL$};
          \node [ob] (mm) at (-7,9) {$\oS_\dL$};
          \node [ob] (m) at (7,9) {$\oS_\dR$};
          \node [ob] (t) at (15,0) {$\oS_\dR$};
          \draw [a] (s) -- node[arr,al] {$\mT_\dL$} (mm);
          \draw [a] (mm) -- node[arr,above] {$\bM$} (m);
          \draw [a] (m) -- node[arr,ar] {$\mT_\dR$} (t);
          \draw [a] (s) -- node[arr,below] {$\bM$} (t);
          \node [cell] at (0,4.5) {$\alpha$};
        \end{tikzpicture}
      }
      \qqq
      \mathclap{
        \begin{tikzpicture}[scale=.85714]
          \begin{scope}
            \clip [boundc] (-7,-7) rectangle (7,7);
            \fill [bg] (-7,-7) rectangle (7,7);
            \draw [omon,red] (-4,7) .. controls +(0,-5) and +(-1,1) .. (0,0);
            \draw [obmodh,red] (0,7) -- (0,-7);
            \begin{scope}
              \clip (0,-7) rectangle (7,7);
              \fill [bgd] (-7,-7) rectangle (7,7);
              \draw [obmodh,blue] (0,7) -- (0,-7);
              \draw [omon,blue] (4,7) .. controls +(0,-5) and +(1,1) .. (0,0);
            \end{scope}
            \draw [obmod] (0,7) -- (0,-7);
          \end{scope}
          \draw [bound] (-7,-7) rectangle (7,7);
          \node [label,above] at (0,7) {$\bM$};
          \node [label,red,above] at (-4,7) {$\mT_\dL$};
          \node [label,blue,above] at (4,7) {$\mT_\dR$};
          \node [label,below] at (0,-7) {$\phantom{\bM}$};
          \node [label,below] at (-4,-7) {$\phantom{\mT_\dL}$};
          \node [label,below] at (4,-7) {$\phantom{\mT_\dR}$};
        \end{tikzpicture}
      }
      \vspace{-5pt}
    \]
    is equivalent to giving an arbitrary 1-cell
    \vspace{-10pt}
    
    \[\preq
      \hspace{-\leftmargin}
      \mathclap{\blift{\bM} \colon \OpAlg{\mT_\dL}{\oS_\dL} \to \Alg{\mT_\dR}{\oS_\dR}}
      \qqq
      \mathclap{
        \begin{tikzpicture}[yscale=.5]
          \begin{scope}
            \clip [boundc] (-6,-6) rectangle (6,6);
            \fill [bg] (-6,-6) rectangle (6,6);
            \fill [bgd] (0,-6) rectangle (6,6);
            \fill [kl=red] (-6,-6) rectangle (0,6);
            \fill [em=blue] (0,-6) rectangle (6,6);
            \draw [o] (0,7) -- (0,-7);
          \end{scope}
          \draw [bound] (-6,-6) rectangle (6,6);
          \node [label,above] at (0,6) {$\blift{\bM}$};
          \node [label,below] at (0,-6) {$\phantom{\blift{\bM}}$};
        \end{tikzpicture}
      }
    \]
    \afterimage
    \vspace{-15pt}
  \end{enumerate}

  \noindent
  Now let
  %
  $\bM_1\colon \oS_\dL \to \oS_\dR$ and
  $\bM_2\colon \oS_\dL \to \oS_\dR$ be $(\mT_\dL, \mT_\dR)$-bimodules.
  \begin{enumerate}[label=(\roman*)]
    \setcounter{enumi}{1}
  \item\label{item:bimodtwo} A bimodule map
    $\phi \colon \bM_1 \Rightarrow \bM_2$
    \vspace{-5pt}
    \[\preq
      \hspace{-\leftmargin}
      \mathclap{
        \begin{tikzpicture}
          \node [ob] (s) at (-7,0) {$\oS_\dL$};
          \node [ob] (t) at (7,0) {$\oS_\dR$};
          \draw [a] (s) .. controls +(5,3) and +(-5,3) .. node[arr,above] {$\bM_1$} (t);
          \draw [a] (s) .. controls +(5,-3) and +(-5,-3) .. node[arr,below] {$\bM_2$} (t);
          \node [cell] at (0,0) {$\phi$};
        \end{tikzpicture}
      }
      \qqq
      \mathclap{
        \begin{tikzpicture}
          \begin{scope}
            \clip [boundc] (-6,-6) rectangle (6,6);
            \fill [bg] (-6,-6) rectangle (6,6);
            \draw [obmodh, red] (0,6) -- (0,-6);
            \node [tmhomh, red] at (0,0) {};
            \begin{scope}
              \clip (0,-6) rectangle (6,6);
              \fill [bgd] (-6,-6) rectangle (6,6);
              \draw [obmodh, blue] (0,6) -- (0,-6);
              \node [tmhomh, blue] at (0,0) {};
            \end{scope}
            \draw [obmod] (0,6) -- (0,0);
            \draw [obmod,white] (0,0) -- (0,-6);
            \node [tmhom] at (0,0) {};
          \end{scope}
          \draw [bound] (-6,-6) rectangle (6,6);
          \node [label,above] at (0,6) {$\bM_1$};
          \node [label,below] at (0,-6) {$\bM_2$};
          \node [label,left=4] at (0,0) {$\phi$};
        \end{tikzpicture}
      }
      \vspace{-5pt}
    \]
    is equivalent to an arbitrary 2-cell
    $\blift{\phi} \colon \blift{\bM_1} \Rightarrow \blift{\bM_2}$.
    \vspace{-5pt}
    \[\preq
      \hspace{-\leftmargin}
      \mathclap{
        \begin{tikzpicture}
          \node [ob] (s) at (-7,0) {$\OpAlg{\mT_\dL}{S_\dL}$};
          \node [ob] (t) at (7,0) {$\Alg{\mT_\dR}{S_\dR}$};
          \draw [a] (s) .. controls +(5,3) and +(-5,3) .. node[arr,above] {$\blift{\bM_1}$} (t);
          \draw [a] (s) .. controls +(5,-3) and +(-5,-3) .. node[arr,below] {$\blift{\bM_2}$} (t);
          \node [cell] at (0,0) {$\blift{\phi}$};
        \end{tikzpicture}
      }
      \qqq
      \mathclap{
        \begin{tikzpicture}
          \begin{scope}
            \clip [boundc] (-6,-6) rectangle (6,6);
            \fill [bg] (-6,-6) rectangle (6,6);
            \fill [bgd] (0,-6) rectangle (6,6);
            \fill [kl=red] (-6,-6) rectangle (0,6);
            \fill [em=blue] (0,-6) rectangle (6,6);
            \draw [o] (0,7) -- (0,0);
            \draw [o,white] (0,0) -- (0,-7);
            \node [tmhom] at (0,0) {};
          \end{scope}
          \draw [bound] (-6,-6) rectangle (6,6);
          \node [label,above] at (0,6) {$\blift{\bM_1}$};
          \node [label,below] at (0,-6) {$\blift{\bM_2}$};
          \node [label,left=3.5] at (0,0) {$\blift{\phi}$};
        \end{tikzpicture}
      }
    \]
  \end{enumerate}
\end{proposition}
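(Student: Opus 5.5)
The plan is a two-step reduction: peel off the $\mT_\dR$-module structure using the universal property of $\Alg{\mT_\dR}{\oS_\dR}$, then peel off the $\mT_\dL$-opmodule structure using the universal property of $\OpAlg{\mT_\dL}{\oS_\dL}$. This mirrors the proof of \cref{prop:formaltfae}.

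\textbf{Step 1 (right action).} By \cref{def:em}, a $\mT_\dR$-module structure on $\bM$ is equivalent to a factoring $\bM = \lift{\bM} \then \mcar{\mT_\dR}$, where $\lift{\bM} \colon \oS_\dL \to \Alg{\mT_\dR}{\oS_\dR}$. Likewise, $\mT_\dR$-module maps $\bM_1 \Rightarrow \bM_2$ correspond bijectively to arbitrary 2-cells $\lift{\bM_1} \Rightarrow \lift{\bM_2}$. We apply this with the domain object $\oX = \oS_\dL$.

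\textbf{Step 2 (left action).} The key claim is that, under the correspondence of Step 1, $\mT_\dL$-opmodule structures on $\bM$ compatible with the $\mT_\dR$-module structure correspond exactly to $\mT_\dL$-opmodule structures on $\lift{\bM}$. To prove it:
\begin{itemize}
\item Let $\lambda \colon \mT_\dL \then \bM \Rightarrow \bM$ be an opmodule structure. Here $\mT_\dL \then \bM$ is a $\mT_\dR$-module, by precomposing $\bM$ with $\mT_\dL$, as in the proof of \cref{prop:termresolve}.
\item The bimodule compatibility law says exactly that $\lambda$ is a $\mT_\dR$-module map.
\item By the 2-cell part of the universal property, $\lambda$ therefore corresponds to a unique 2-cell $\lift{\lambda} \colon \mT_\dL \then \lift{\bM} \Rightarrow \lift{\bM}$. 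This uses that the factoring of $\mT_\dL \then \bM$ is $\mT_\dL \then \lift{\bM}$, by uniqueness of factorings.
\item The opmodule associativity and unit laws for $\lambda$ are equations between $\mT_\dR$-module maps. They hold if and only if the corresponding laws hold for $\lift{\lambda}$, since $\mcar{\mT_\dR}$ is faithful and the correspondence respects composites and whiskering.
\item Conversely, any opmodule structure on $\lift{\bM}$ whiskers with $\mcar{\mT_\dR}$ to a compatible opmodule structure on $\bM$.
\end{itemize}
Dually, bimodule maps correspond to $\mT_\dL$-opmodule maps $\lift{\bM_1} \Rightarrow \lift{\bM_2}$.

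\textbf{Step 3 (conclude).} Apply the universal property of the opalgebra object, i.e.\ \cref{def:em} in $\tX\op$. Opmodules $\oS_\dL \to \Alg{\mT_\dR}{\oS_\dR}$ correspond to 1-cells $\blift{\bM} \colon \OpAlg{\mT_\dL}{\oS_\dL} \to \Alg{\mT_\dR}{\oS_\dR}$ with $\mkcar{\mT_\dL} \then \blift{\bM} = \lift{\bM}$. Opmodule maps correspond to arbitrary 2-cells $\blift{\bM_1} \Rightarrow \blift{\bM_2}$. Composing the bijections of Steps 1--3 yields \labelcref{item:bimodone} and \labelcref{item:bimodtwo}.

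The main obstacle is Step 2. It requires checking carefully that the factoring of $\mT_\dL \then \bM$ through $\mcar{\mT_\dR}$ really is $\mT_\dL \then \lift{\bM}$, and that the bijection on 2-cells preserves the composites appearing in the opmodule laws: whiskering by $\mT_\dL$, and vertical composition with $\mu$ and $\eta$. I would handle this through the naturality clause of the universal property, namely uniqueness of lifts, which makes lifting compatible with precomposition by 1-cells and 2-cells into $\oS_\dL$. Everything else is a direct application of \cref{def:em} and its dual.
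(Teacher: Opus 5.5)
Your proposal is correct and follows the same route as the paper. Both arguments lift the right $\mT_\dR$-action through the algebra object, use the bimodule compatibility law to see that $\lambda$ is a $\mT_\dR$-module map and so lifts to an opmodule structure on $\lift{\bM}$, and then lift once more through the opalgebra object; bimodule maps are handled by the same two steps. The paper simply asserts that these lifting steps are natural bijections, while your Step 2 spells out the check this requires, namely preservation of the opmodule laws via uniqueness of lifts and faithfulness of $\mcar{\mT_\dR}$.
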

\begin{proof}
  Observe that for any $(\mT_\dL, \mT_\dR)$-bimodule $\bM$, the
  structure 2-cell $\lambda \colon \mT_\dL \then \bM \Rightarrow \bM$
  constitutes a $\mT_\dR$-module map (this is the bimodule
  compatibility law). Applying the universal properties of
  $\Alg{\mT_\dR}{\oS_\dR}$ and $\OpAlg{\mT_\dL}{\oS_\dL}$ in turn, we
  may lift the $\mT_\dL$-opmodule structure on
  $\bM \colon \oS_\dL \to \oS_\dR$ to $\mT_\dL$-opmodule structure on
  $\lift{\bM} \colon \oS_\dL \to \Alg{\mT_\dR}{\oS_\dR}$, which lifts
  to a 1-cell $\OpAlg{\mT_\dL}{\oS_\dL} \to \Alg{\mT_\dR}{\oS_\dR}$ as
  desired.  Bimodule maps $\phi$ similarly lift in two steps, and
  these translation processes via universal property are by definition
  natural bijections.\footnote{Symmetrically, the 
    structure 2-cell $\rho \colon \bM \then \mT_\dR \Rightarrow \bM$
    constitutes a $\mT_\dL$-opmodule map, and the steps may equally
    well be performed in the other order, first lifting the
    $\mT_\dR$-module structure on $\bM$ to $\mT_\dR$-module structure
    on the corresponding 1-cell
    $\OpAlg{\mT_\dL}{\oS_\dL} \to \oS_\dR$. Both methods yield the
    same result, since the inverses of the natural bijections, given
    by composing $\mkcarmod{\mT_\dL}$ and $\mcarmod{\mT_\dR}$ on
    either side of the lifted cells, commute by associativity of
    composition.}
\end{proof}

\begin{definition}
  There is also a more general notion of map from multiple compatible
  bimodules to one. Let
  \[\preq
    \mathclap{

    }
  \]
  be a finite path of 1-cells where each $\oS_i$ is equipped with a monad
  $\mT_i$ and each $\bM_i$ is a $(\mT_{i-1},\mT_i)$-bimodule, and let
  \[\preq
    \mathclap{\bM\colon \oS_0 \to \oS_n}
    \qqq
    \mathclap{
      %
    }
  \]
\end{definition}

\begin{remark}
  In the case $n = 0$, meaning the input is a length zero path (i.e.\
  a single object equipped with a monad), the bimodule map laws
  specialize to
  \vspace{-5pt}
  \[
    \preq
    \mathclap{
      %
    }
  \]
\end{remark}

Monads, bimodules, and bimodule maps in $\tX$ comprise a \emph{virtual
  2-category}, which is like a 2-category but where 1-cells cannot
compose and each 2-cell has a path of compatible 1-cells as input but
only one 1-cell as output.


\begin{remark}\label{rem:generalbimod}
  There is a still more general variant of the definition of bimodule
  map that incorporates monad colax (or alternatively lax\footnote{We
    focus on colax 1-cells instead of lax 1-cells because this gives a
    virtual double category instead of a covirtual double category.})
  1-cells
  \[\preq
    \mathclap{

    }
  \]
  
  \noindent
  required to satisfy laws analogous to \cref{def:bimodule}
  \[\preq
    \mathclap{
      %
    }
  \]
  \afterimage

  A special case of note is with length $0$ input (consisting of just
  a monad $\mT_0$), and with output being a monad $\mT$ itself
  regarded as a $(\mT,\mT)$-bimodule. This is precisely a
  specialization between the monad colax 1-cells $\fF_\dR$ and
  $\fF_\dL$ from $\mT_0$ to $\mT$.  \vspace{-10pt}
  
  \[\preq
    \mathclap{
      %
    }
  \]
  
  Similar to this definition, and more commonly studied, are loose
  monads and bimodules in a double category (or virtual double
  category) $\DD$, which incorporates monad tight 1-cells instead of
  monad colax 1-cells~\cite[Definition 2.8]{cruttwell-shulman} (see
  also~\cite{wood:ii,leinster:enrichment} for earlier references).
  \[\preq
    \mathclap{
      %
    }
  \]
  
  Monads and bimodules in $\DD$ form a \emph{virtual double category}
  $\MMod(\DD)$, which is like a double category but where loose
  1-cells --- here bimodules --- cannot compose, and each 2-cell has a
  path of compatible source loose 1-cells but only one target loose
  1-cell. The above 2-categorical definition involving colax 1-cells
  can be seen as a special case of the double-categorical definition.
  Specifically, $\MMod(\tX) \coloneqq \MMod(\SSq(\tX))$ (similar to
  \cref{rem:clarke-dimeglio}).

  The virtual double category $\MMod(\DD)$ has \emph{unit loose
    1-cells}~\cite[Definition 5.1]{cruttwell-shulman}, given by monads
  regarded as bimodules over themselves, which implies it has an
  underlying tight 2-category~\cite[Proposition
  6.1]{cruttwell-shulman}.\footnote{For any virtual double category
    $\DD$, the virtual double category $\MMod(\DD)$ is characterized
    as the \emph{cofree virtual double category with units} on
    $\DD$~\cite[Proposition 5.14]{cruttwell-shulman}.}  The 2-category
  of monads, colax 1-cells, and specializations in the 2-category
  $\tX$ --- i.e.\ the dual construction of $\EM(\tX)$ (which is
  denoted $\Kl(\tX)$ in~\cite{lack-street}) --- is the same as
  underlying tight 2-category\footnote{Here we are taking the
    ``underlying tight 2-category'' to have 2-cells that go in the
    opposite direction of loose 1-cells, as in~\cite[Proposition
    6.1]{cruttwell-shulman}. This way $\tX$ is the underlying tight
    2-category of $\SSq(\tX)$, where $\SSq(\tX)$ is defined based on
    the picture that loose 1-cells point right, tight 1-cells point
    down, and 2-cells point down-left. But the other convention is
    often preferable, for instance to recover the $\Cat(\cS)$ as the
    underlying tight 2-category of $\MMod(\SSpan(\cS))$ with the
    arrows in categories being defined to point in the same direction
    as their carrying spans.} of $\MMod(\tX)$. This also connects with
  \cref{ex:internal}, as $\MMod(\SSpan(\cS))$ is the double category
  of categories, profunctors, and functors internal to
  $\cS$~\cite[Example 2.10]{cruttwell-shulman}.
\end{remark}

\begin{remark}
  Bimodules and bimodule maps are instances of the
  2-category-theoretic notions of module and modulation between lax
  functors of
  2-categories~\cite{cockett-koslowski-seely-wood,pare:modules}. Specificially,
  viewing monads as lax functors from the terminal 2-category
  $\tcat{1}$ as mentioned in \cref{rem:transfors}, a bimodule between
  monads is equivalently a module between such lax functors, and a
  (multi-input) bimodule map is equivalently a (multi-)modulation
  between such modules.

  Modules and modulations can also be represented in string
  diagrams. As mentioned in \cref{rem:transfors}, functors,
  transformations, and modifications between 2-categories $\tcat{C}$
  and $\tcat{D}$ are represented as regions, strings, and nodes that
  may be superimposed on string diagrams in $\tcat{C}$ to yield string
  diagrams in $\tcat{D}$. (Lax functors and their transformations and
  modifications are represented in the same way as ordinary functors,
  except we limit ourselves to those string diagrams in the domain
  2-category $\tcat{C}$ featuring any number of input strings but only
  one output string.) Similarly to transformations and modifications,
  modules and modulations are strings and nodes that may be
  superimposed on string diagrams in $\tcat{C}$ to yield string
  diagrams in $\tcat{D}$, only they must be superimposed to coincide
  exclusively with strings and nodes, not regions. Thus, whereas a
  transformation (e.g.\ monad 1-cell) appears as a membrane sliding
  over two instances of a shape (e.g.\ monad) and translating between
  them, a module (e.g.\ bimodule between monads) appears as a membrane
  between the two that is itself stuck on a string of that underlying
  shape.
\end{remark}

\pagebreak
\chapter{Distributive laws}

\begin{definition}\label{def:distributive}
  Let $\mT_1$ and $\mT_2$ be monads on $\oS$. A \emph{distributive
    law}\footnote{The definition of distributive law makes sense in an
    implicit 2-category.} of $\mT_1$ over $\mT_2$ consists of a 2-cell
  $\chi \colon \mT_2 \then \mT_1 \Rightarrow \mT_1 \then \mT_2$
  \[\preq
    \mathclap{
      \begin{tikzpicture}
        \node [ob] (s) at (-7,0)
        {$\oS$}; \node [ob] (d) at (0,-7)
        {$\oS$}; \node [ob] (u) at (0,7)
        {$\oS$}; \node [ob] (t) at (7,0)
        {$\oS$}; \draw [a] (s) -- node[arr,al]
        {$\mT_2$} (u); \draw [a] (u) -- node[arr,arcl]
        {$\mT_1$} (t); \draw [a] (s) -- node[arr,bl]
        {$\mT_1$} (d); \draw [a] (d) -- node[arr,br]
        {$\mT_2$} (t); \node [cell] at (0,0) {$\chi$};
      \end{tikzpicture}
    }
    \qqq
    \mathclap{
      \begin{tikzpicture}
        \begin{scope}
          \clip [boundc] (-6,-6) rectangle (6,6);
          \fill [bg] (-6,-6) rectangle (6,6);
          \renewcommand{\mypatha}{(-2.6,6) .. controls +(0,-3.5) and +(-1,1) .. (0,0) .. controls +(1,-1) and +(0,3.5).. (2.6,-6)}
          \renewcommand{\mypathb}{(2.6,6) .. controls +(0,-3.5) and +(1,1) .. (0,0) .. controls +(-1,-1) and +(0,3.5).. (-2.6,-6)}
          \begin{scope}
            \clip (-6,6) -- (0,6) -- (0,-6) -- (6,-6) -- (6,0) -- (-6,0) -- cycle;
            \draw [omon,blue] \mypatha;
          \end{scope}
          \begin{scope}
            \clip (6,6) -- (0,6) -- (0,-6) -- (-6,-6) -- (-6,0) -- (6,0) -- cycle;
            \draw [omon,red] \mypathb;
          \end{scope}
        \end{scope}
        \draw [bound] (-6,-6) rectangle (6,6);
        \node [label,blue,above] at (-2.6,6) {$\mT_2$};
        \node [label,red,above] at (2.6,6) {$\mT_1$};
        \node [label,blue,below] at (2.6,-6) {$\mT_2$};
        \node [label,red,below] at (-2.6,-6) {$\mT_1$};
      \end{tikzpicture}
    }
  \]
  exhibiting $\mT_2$ as a monad lax 1-cell $\mT_1 \to \mT_1$
  \[\preq
    \mathclap{
      \begin{tikzpicture}[scale=.825]
        \node [ob] (s) at (-7,0) {$\oS$};
        \node [ob] (t) at (10.5,3.5) {$\oS$};
        \node [ob] (u) at (0,7) {$\oS$};
        \node [ob] (d) at (3.5,-3.5) {$\oS$};
        \node [ob] (tt) at (14,-7) {$\oS$};
        \node [ob] (dd) at (7,-14) {$\oS$};
        \draw [a] (s) -- node[arr,al] {$\mT_2$} (u);
        \draw [a] (u) -- node[arr,above] {$\mT_1$} (t);
        \draw [a] (s) -- node[arr,below,pos=.3] {$\mT_1$} (d);
        \draw [a] (d) -- node[arr,br] {$\mT_2$} (t);
        \draw [a] (t) -- node[arr,right] {$\mT_1$} (tt);
        \draw [a] (d) -- node[arr,left,pos=.7] {$\mT_1$} (dd);
        \draw [a] (dd) -- node[arr,br] {$\mT_2$} (tt);
        \draw [a] (s) .. controls +(2,-8) and +(-8,2) .. node[arr,bl] {$\mT_1$} (dd);
        \node [cell] at (1.75,1.75) {$\chi$};
        \node [cell] at (8.75,-5.25) {$\chi$};
        \node [cell] at (0,-7) {$\mu$};
      \end{tikzpicture}
      \hspace{-2.5pt}
      \eqq
      \begin{tikzpicture}[scale=.825]
        \node [ob] (s) at (-7,0) {$\oS$};
        \node [ob] (t) at (10.5,3.5) {$\oS$};
        \node [ob] (u) at (0,7) {$\oS$};
        \node [ob] (tt) at (14,-7) {$\oS$};
        \node [ob] (dd) at (7,-14) {$\oS$};
        \draw [a] (s) -- node[arr,al] {$\mT_2$} (u);
        \draw [a] (u) -- node[arr,above] {$\mT_1$} (t);
        \draw [a] (t) -- node[arr,right] {$\mT_1$} (tt);
        \draw [a] (dd) -- node[arr,br] {$\mT_2$} (tt);
        \draw [a] (u) .. controls +(2,-8) and +(-8,2) .. node[arr,bl,pos=.8] {$\mT_1$} (tt);
        \draw [a] (s) .. controls +(2,-8) and +(-8,2) .. node[arr,bl] {$\mT_1$} (dd);
        \node [cell] at (.5,-5) {$\chi$};
        \node [cell] at (7,0) {$\mu$};
      \end{tikzpicture}
      \hspace{-2.5pt}
    }
    \qqq
    \mathclap{
      \begin{tikzpicture}[xscale=.7894,yscale=.7]
        \begin{scope}
          \clip [boundc] (-10,-14) rectangle (9,10);
          \fill [bg] (-10,-14) rectangle (9,10);
          \renewcommand{\mypath}{(-5,10) -- (-5,9) .. controls +(0,-3) and +(-1,1) .. (-2,3) -- (1,0) .. controls +(2,-2) and +(0,3) .. (4,-8) -- (4,-14)};
          \draw [omon,red] (1,10) -- (1,9) .. controls +(0,-3) and +(1,1) .. (-2,3);
          \draw [omon,red] (5.5,10) -- (5.5,9) .. controls +(0,-5) and +(1,1) .. (1,0);
          \draw [omon,red] (-2,3) .. controls +(-1,-1) and +(0,5) .. (-6.5,-6) .. controls +(0,-3) and +(-1,1) .. (-4.25,-11);
          \draw [omon,red] (1,0) .. controls +(-1,-1) and +(0,3) .. (-2,-6) .. controls +(0,-3) and +(1,1) .. (-4.25,-11);
          \draw [omon,red] (-4.25,-11) -- (-4.25,-14);
          \node [tmon,red] at (-4.25,-11) {};
          \clip (-10,10) -- (-2,10) -- (-2,0) -- (10,0) -- (10,-14) -- (1,-14) -- (1,3) -- (-10,3) -- cycle;
          \draw [omon,blue] (-5,11) -- \mypath -- (5,-15);
        \end{scope}
        \draw [bound] (-10,-14) rectangle (9,10);
      \end{tikzpicture}
      \eqq
      \begin{tikzpicture}[xscale=1,yscale=.7]
        \begin{scope}
          \clip [boundc] (-7.5,-13) rectangle (7.5,11);
          \fill [bg] (-7.5,-14) rectangle (7.5,11);
          \renewcommand{\mypath}{(-3.5,11) -- (-3.5,5) .. controls +(0,-5) and +(-1,1) .. (0,-4.5) .. controls +(1,-1) and +(0,5) .. (3.5,-13) -- (3.5,-14)};
          \draw [omon,red] (1,11) -- (1,9) .. controls +(0,-3.5) and +(-1,1) .. (2.75,2);
          \draw [omon,red] (4.5,11) --(4.5,9) .. controls +(0,-3.5) and +(1,1) .. (2.75,2);
          \draw [omon,red] (2.75,2) .. controls +(0,-2.5) and +(1,1) .. (0,-4.5);
          \node [tmon,red] at (2.75,2) {};
          \draw [omon,red] (0,-4.5) .. controls +(-1,-1) and +(0,4) .. (-3.5,-13) -- (-3.5,-14);
          \clip (-7.5,11) -- (0,11) -- (0,-13) -- (7.5,-13) -- (7.5,-4.5) -- (-7.5,-4.5) -- cycle;
          \draw [omon,blue] (-3.5,12) -- \mypath -- (3.5,-15);
        \end{scope}
        \draw [bound] (-7.5,-13) rectangle (7.5,11);
      \end{tikzpicture}
    }
  \]
  \vspace{-10pt}
  \[\preq
    \mathclap{
      \begin{tikzpicture}[scale=.825]
        \node [ob] (s) at (-7,0) {$\oS$};
        \node [ob] (u) at (0,7) {$\oS$};
        \node [ob] at (0,7) {$\phantom{\oS}$};
        \node [ob] (tt) at (14,-7) {$\oS$};
        \node [ob] (dd) at (7,-14) {$\oS$};
        \draw [a] (s) -- node[arr,al] {$\mT_2$} (u);
        \draw [a] (dd) -- node[arr,br] {$\mT_2$} (tt);
        \draw [a] (s) .. controls +(2,-8) and +(-8,2) .. node[arr,bl] {$\mT_1$} (dd);
        \draw [eq] (u) .. controls +(8,-2) and +(-2,8) .. (tt);
        \draw [eq] (s) .. controls +(8,-2) and +(-2,8) .. (dd);
        \node [cell] at (5.75,-1.25) {$=$};
        \node [cell] at (-.25,-7.25) {$\eta$};
      \end{tikzpicture}
      \eqq
      \begin{tikzpicture}[scale=.825]
        \node [ob] (s) at (-7,0) {$\oS$};
        \node [ob] (u) at (0,7) {$\oS$};
        \node [ob] (tt) at (14,-7) {$\oS$};
        \node [ob] (dd) at (7,-14) {$\oS$};
        \draw [a] (s) -- node[arr,al] {$\mT_2$} (u);
        \draw [a] (dd) -- node[arr,br] {$\mT_2$} (tt);
        \draw [a] (u) .. controls +(2,-8) and +(-8,2) .. node[arr,bl,pos=.8] {$\mT_1$} (tt);
        \draw [eq] (u) .. controls +(8,-2) and +(-2,8) .. (tt);
        \draw [a] (s) .. controls +(2,-8) and +(-8,2) .. node[arr,bl] {$\mT_1$} (dd);
        \node [cell] at (.5,-5) {$\chi$};
        \node [cell] at (6.75,-.25) {$\eta$};
      \end{tikzpicture}
    }
    \qqq
    \mathclap{
      \begin{tikzpicture}[yscale=.7]
        \begin{scope}
          \clip [boundc] (-7.5,-13) rectangle (7.5,8);
          \fill [bg] (-7.5,-14) rectangle (7.5,11);
          \renewcommand{\mypath}{(-3.5,11) -- (-3.5,8) .. controls +(0,-5) and +(-1,1) .. (0,-.5) .. controls +(1,-1) and +(0,5) .. (3.5,-9.5) -- (3.5,-14)};
          \draw [omon,red] (-3.25,-7.5) -- (-3.25,-14);
          \node [tmon,red] at (-3.25,-7.5) {};
          \draw [omon,blue] (-3.5,12) -- \mypath -- (3.5,-15);
        \end{scope}
        \draw [bound] (-7.5,-13) rectangle (7.5,8);
      \end{tikzpicture}
      \eqq
      \begin{tikzpicture}[yscale=.7]
        \begin{scope}
          \clip [boundc] (-7.5,-13) rectangle (7.5,8);
          \fill [bg] (-7.5,-14) rectangle (7.5,11);
          \renewcommand{\mypath}{(-3.5,11) -- (-3.5,5) .. controls +(0,-5) and +(-1,1) .. (0,-4.5) .. controls +(1,-1) and +(0,5) .. (3.5,-13) -- (3.5,-14)};
          \draw [omon,red] (2.75,2) .. controls +(0,-2.5) and +(1,1) .. (0,-4.5);
          \node [tmon,red] at (2.75,2) {};
          \draw [omon,red] (0,-4.5) .. controls +(-1,-1) and +(0,4) .. (-3.5,-13) -- (-3.5,-14);
          \clip (-7.5,8) -- (0,8) -- (0,-13) -- (7.5,-13) -- (7.5,-4.5) -- (-7.5,-4.5) -- cycle; 
          \draw [omon,blue] (-3.5,12) -- \mypath -- (3.5,-15);
        \end{scope}
        \draw [bound] (-7.5,-13) rectangle (7.5,8);
      \end{tikzpicture}
    }
  \]
  and $\mT_1$ as a monad colax 1-cell $\mT_2 \to \mT_2$.
  \[\preq
    \mathclap{
      \hspace{-2.5pt}
      \begin{tikzpicture}[scale=.825,xscale=-1]
        \node [ob] (s) at (-7,0) {$\oS$};
        \node [ob] (t) at (10.5,3.5) {$\oS$};
        \node [ob] (u) at (0,7) {$\oS$};
        \node [ob] (d) at (3.5,-3.5) {$\oS$};
        \node [ob] (tt) at (14,-7) {$\oS$};
        \node [ob] (dd) at (7,-14) {$\oS$};
        \draw [ab] (s) -- node[arr,ar] {$\mT_1$} (u);
        \draw [ab] (u) -- node[arr,above] {$\mT_2$} (t);
        \draw [ab] (s) -- node[arr,below,pos=.3] {$\mT_2$} (d);
        \draw [ab] (d) -- node[arr,bl] {$\mT_1$} (t);
        \draw [ab] (t) -- node[arr,left] {$\mT_2$} (tt);
        \draw [ab] (d) -- node[arr,right,pos=.7] {$\mT_2$} (dd);
        \draw [ab] (dd) -- node[arr,bl] {$\mT_1$} (tt);
        \draw [ab] (s) .. controls +(2,-8) and +(-8,2) .. node[arr,br] {$\mT_2$} (dd);
        \node [cell] at (1.75,1.75) {$\chi$};
        \node [cell] at (8.75,-5.25) {$\chi$};
        \node [cell] at (0,-7) {$\mu$};
      \end{tikzpicture}
      \eqq
      \hspace{-2.5pt}
      \begin{tikzpicture}[scale=.825,xscale=-1]
        \node [ob] (s) at (-7,0) {$\oS$};
        \node [ob] (t) at (10.5,3.5) {$\oS$};
        \node [ob] (u) at (0,7) {$\oS$};
        \node [ob] (tt) at (14,-7) {$\oS$};
        \node [ob] (dd) at (7,-14) {$\oS$};
        \draw [ab] (s) -- node[arr,ar] {$\mT_1$} (u);
        \draw [ab] (u) -- node[arr,above] {$\mT_2$} (t);
        \draw [ab] (t) -- node[arr,left] {$\mT_2$} (tt);
        \draw [ab] (dd) -- node[arr,bl] {$\mT_1$} (tt);
        \draw [ab] (u) .. controls +(2,-8) and +(-8,2) .. node[arr,br,pos=.8] {$\mT_2$} (tt);
        \draw [ab] (s) .. controls +(2,-8) and +(-8,2) .. node[arr,br] {$\mT_2$} (dd);
        \node [cell] at (.5,-5) {$\chi$};
        \node [cell] at (7,0) {$\mu$};
      \end{tikzpicture}
    }
    \qqq
    \mathclap{
      \begin{tikzpicture}[xscale=-.7894,yscale=.7]
        \begin{scope}
          \clip [boundc] (-10,-14) rectangle (9,10);
          \fill [bg] (-10,-14) rectangle (9,10);
          \renewcommand{\mypath}{(-5,10) -- (-5,9) .. controls +(0,-3) and +(-1,1) .. (-2,3) -- (1,0) .. controls +(2,-2) and +(0,3) .. (4,-8) -- (4,-14)};
          \draw [omon,blue] (1,10) -- (1,9) .. controls +(0,-3) and +(1,1) .. (-2,3);
          \draw [omon,blue] (5.5,10) -- (5.5,9) .. controls +(0,-5) and +(1,1) .. (1,0);
          \draw [omon,blue] (-2,3) .. controls +(-1,-1) and +(0,5) .. (-6.5,-6) .. controls +(0,-3) and +(-1,1) .. (-4.25,-11);
          \draw [omon,blue] (1,0) .. controls +(-1,-1) and +(0,3) .. (-2,-6) .. controls +(0,-3) and +(1,1) .. (-4.25,-11);
          \draw [omon,blue] (-4.25,-11) -- (-4.25,-14);
          \node [tmon,blue] at (-4.25,-11) {};
          \clip (-10,10) -- (-2,10) -- (-2,0) -- (10,0) -- (10,-14) -- (1,-14) -- (1,3) -- (-10,3) -- cycle;
          \draw [omon,red] (-5,11) -- \mypath -- (5,-15);
        \end{scope}
        \draw [bound] (-10,-14) rectangle (9,10);
      \end{tikzpicture}
      \eqq
      \begin{tikzpicture}[xscale=-1,yscale=.7]
        \begin{scope}
          \clip [boundc] (-7.5,-13) rectangle (7.5,11);
          \fill [bg] (-7.5,-14) rectangle (7.5,11);
          \renewcommand{\mypath}{(-3.5,11) -- (-3.5,5) .. controls +(0,-5) and +(-1,1) .. (0,-4.5) .. controls +(1,-1) and +(0,5) .. (3.5,-13) -- (3.5,-14)};
          \draw [omon,blue] (1,11) -- (1,9) .. controls +(0,-3.5) and +(-1,1) .. (2.75,2);
          \draw [omon,blue] (4.5,11) --(4.5,9) .. controls +(0,-3.5) and +(1,1) .. (2.75,2);
          \draw [omon,blue] (2.75,2) .. controls +(0,-2.5) and +(1,1) .. (0,-4.5);
          \node [tmon,blue] at (2.75,2) {};
          \draw [omon,blue] (0,-4.5) .. controls +(-1,-1) and +(0,4) .. (-3.5,-13) -- (-3.5,-14);
          \clip (-7.5,11) -- (0,11) -- (0,-13) -- (7.5,-13) -- (7.5,-4.5) -- (-7.5,-4.5) -- cycle;
          \draw [omon,red] (-3.5,12) -- \mypath -- (3.5,-15);
        \end{scope}
        \draw [bound] (-7.5,-13) rectangle (7.5,11);
      \end{tikzpicture}
    }
  \]
  \vspace{-10pt}
  \[\preq
    \mathclap{
      \begin{tikzpicture}[scale=.825,xscale=-1]
        \node [ob] (s) at (-7,0) {$\oS$};
        \node [ob] (u) at (0,7) {$\oS$};
        \node [ob] at (0,7) {$\phantom{\oS}$};
        \node [ob] (tt) at (14,-7) {$\oS$};
        \node [ob] (dd) at (7,-14) {$\oS$};
        \draw [ab] (s) -- node[arr,ar] {$\mT_1$} (u);
        \draw [ab] (dd) -- node[arr,bl] {$\mT_1$} (tt);
        \draw [ab] (s) .. controls +(2,-8) and +(-8,2) .. node[arr,br] {$\mT_2$} (dd);
        \draw [eq] (u) .. controls +(8,-2) and +(-2,8) .. (tt);
        \draw [eq] (s) .. controls +(8,-2) and +(-2,8) .. (dd);
        \node [cell] at (5.75,-1.25) {$=$};
        \node [cell] at (-.25,-7.25) {$\eta$};
      \end{tikzpicture}
      \eqq
      \begin{tikzpicture}[scale=.825,xscale=-1]
        \node [ob] (s) at (-7,0) {$\oS$};
        \node [ob] (u) at (0,7) {$\oS$};
        \node [ob] (tt) at (14,-7) {$\oS$};
        \node [ob] (dd) at (7,-14) {$\oS$};
        \draw [ab] (s) -- node[arr,ar] {$\mT_1$} (u);
        \draw [ab] (dd) -- node[arr,bl] {$\mT_1$} (tt);
        \draw [ab] (u) .. controls +(2,-8) and +(-8,2) .. node[arr,br,pos=.8] {$\mT_2$} (tt);
        \draw [eq] (u) .. controls +(8,-2) and +(-2,8) .. (tt);
        \draw [ab] (s) .. controls +(2,-8) and +(-8,2) .. node[arr,br] {$\mT_2$} (dd);
        \node [cell] at (.5,-5) {$\chi$};
        \node [cell] at (6.75,-.25) {$\eta$};
      \end{tikzpicture}
    }
    \qqq
    \mathclap{
      \begin{tikzpicture}[xscale=-1,yscale=.7]
        \begin{scope}
          \clip [boundc] (-7.5,-13) rectangle (7.5,8);
          \fill [bg] (-7.5,-14) rectangle (7.5,11);
          \renewcommand{\mypath}{(-3.5,11) -- (-3.5,8) .. controls +(0,-5) and +(-1,1) .. (0,-.5) .. controls +(1,-1) and +(0,5) .. (3.5,-9.5) -- (3.5,-14)};
          \draw [omon,blue] (-3.25,-7.5) -- (-3.25,-14);
          \node [tmon,blue] at (-3.25,-7.5) {};
          \draw [omon,red] (-3.5,12) -- \mypath -- (3.5,-15);
        \end{scope}
        \draw [bound] (-7.5,-13) rectangle (7.5,8);
      \end{tikzpicture}
      \eqq
      \begin{tikzpicture}[xscale=-1,yscale=.7]
        \begin{scope}
          \clip [boundc] (-7.5,-13) rectangle (7.5,8);
          \fill [bg] (-7.5,-14) rectangle (7.5,11);
          \renewcommand{\mypath}{(-3.5,11) -- (-3.5,5) .. controls +(0,-5) and +(-1,1) .. (0,-4.5) .. controls +(1,-1) and +(0,5) .. (3.5,-13) -- (3.5,-14)};
          \draw [omon,blue] (2.75,2) .. controls +(0,-2.5) and +(1,1) .. (0,-4.5);
          \node [tmon,blue] at (2.75,2) {};
          \draw [omon,blue] (0,-4.5) .. controls +(-1,-1) and +(0,4) .. (-3.5,-13) -- (-3.5,-14);
          \clip (-7.5,8) -- (0,8) -- (0,-13) -- (7.5,-13) -- (7.5,-4.5) -- (-7.5,-4.5) -- cycle; 
          \draw [omon,red] (-3.5,12) -- \mypath -- (3.5,-15);
        \end{scope}
        \draw [bound] (-7.5,-13) rectangle (7.5,8);
      \end{tikzpicture}
    }
  \]
\end{definition}

\begin{remark}\label{rem:mndmnd}
  A distributive law is the same as a monad in the 2-category
  $\Mnd(\tX)$ of monads, monad lax 1-cells, and monad 2-cells. Indeed,
  given that $\mT_2$ is a monad lax 1-cell from $\mT_1$ to $\mT_1$,
  the condition that $\mT_1$ is a monad colax 1-cell is the same as
  the condition that the multiplication and unit of $\mT_2$ are monad
  2-cells $\mT_2 \then \mT_2 \Rightarrow \mT_2$ and
  $\id_{\oS} \Rightarrow \mT_2$.


  Dually, a distributive law may also be viewed as a monad in the
  2-category of monads, monad colax 1-cells, and monad 2-cells.
\end{remark}

\begin{remark}
  We noted in \cref{rem:transfors} that the 2-category $\Mnd(\tX)$ of
  monads in $\tX$, monad lax 1-cells, and monad 2-cells is the
  2-category $[\tDelta, \tX]_{\lax}$ of functors from $\tDelta$ (the
  walking monad) to $\tX$, lax transformations, and
  modifications. This is the internal hom for the \emph{lax Gray
    product} $\grayl$, a monoidal structure on the category of
  2-categories~\cite{gray:formal}.

  We noted in \cref{rem:mndmnd} that a distributive law corresponds to
  a monad in $\Mnd(\tX)$. This in turn corresponds to a functor from
  $\tDelta$ to $\Mnd(\tX) = [\tDelta, \tX]_{\lax}$. This in turn
  corresponds to a functor from $\tDelta \grayl \tDelta$ to $\tX$
  (applying the tensor-hom adjunction, a.k.a.\ currying). Thus, just
  as a monad in $\tX$ amounts to diagram of shape $\tDelta$ in $\tX$,
  a pair of monads with a distributive law in $\tX$ amounts to a
  diagram of shape $\tDelta \grayl \tDelta$ in $\tX$.

  The cells of \cref{def:distributive} give a presentation of
  $\tDelta \grayl \tDelta$. In general, a string diagram for a cell in
  the lax Gray product $\tX \grayl \tY$ consists of a pair of string
  diagrams for cells in $\tX$ and $\tY$ superimposed, with 1-cell
  strings of $\tX$ able to cross NW/SE through NE/SW 1-cell strings of
  $\tY$, both able to slide freely over 2-cell nodes, in accordance
  with the description of lax transformations from
  \cref{rem:transfors}.  See also~\cite{morehouse:two}.
\end{remark}

\begin{proposition}
  Let $\mT_1$ and $\mT_2$ be monads on $\oS$ with distributive law
  $\chi \colon \mT_2 \then \mT_1 \Rightarrow \mT_1 \then \mT_2$.
  \[\preq \mathclap{
      \begin{tikzpicture}
        \node [ob] (s) at (-7,0)
        {$\oS$}; \node [ob] (d) at (0,-7)
        {$\oS$}; \node [ob] (u) at (0,7)
        {$\oS$}; \node [ob] (t) at (7,0)
        {$\oS$}; \draw [a] (s) -- node[arr,al]
        {$\mT_2$} (u); \draw [a] (u) -- node[arr,arcl]
        {$\mT_1$} (t); \draw [a] (s) -- node[arr,bl]
        {$\mT_1$} (d); \draw [a] (d) -- node[arr,br]
        {$\mT_2$} (t); \node [cell] at (0,0) {$\chi$};
      \end{tikzpicture}
    }
    \qqq
    \mathclap{
      \begin{tikzpicture}
        \begin{scope}
          \clip [boundc] (-6,-6) rectangle (6,6);
          \fill [bg] (-6,-6) rectangle (6,6);
          \renewcommand{\mypatha}{(-2.6,6) .. controls +(0,-3.5) and +(-1,1) .. (0,0) .. controls +(1,-1) and +(0,3.5).. (2.6,-6)}
          \renewcommand{\mypathb}{(2.6,6) .. controls +(0,-3.5) and +(1,1) .. (0,0) .. controls +(-1,-1) and +(0,3.5).. (-2.6,-6)}
          \begin{scope}
            \clip (-6,6) -- (0,6) -- (0,-6) -- (6,-6) -- (6,0) -- (-6,0) -- cycle;
            \draw [omon,blue] \mypatha;
          \end{scope}
          \begin{scope}
            \clip (6,6) -- (0,6) -- (0,-6) -- (-6,-6) -- (-6,0) -- (6,0) -- cycle;
            \draw [omon,red] \mypathb;
          \end{scope}
        \end{scope}
        \draw [bound] (-6,-6) rectangle (6,6);
        \node [label,blue,above] at (-2.6,6) {$\mT_2$};
        \node [label,red,above] at (2.6,6) {$\mT_1$};
        \node [label,blue,below] at (2.6,-6) {$\mT_2$};
        \node [label,red,below] at (-2.6,-6) {$\mT_1$};
      \end{tikzpicture}
    }
  \]
  
  \noindent
  If $\mT_1$ has algebra object $\Alg{\mT_1}{\oS}$, then $\chi$
  induces a monad $\emlift{\mT_2}$ on $\Alg{\mT_1}{\oS}$.
  \vspace{-5pt}
  
  \[\preq
    \mathclap{\emlift{\mT_2} \colon \Alg{\mT_1}{\oS} \to \Alg{\mT_1}{\oS}}
    \qqq
    \mathclap{
      \begin{tikzpicture}[yscale=.5]
        \begin{scope}
          \clip [boundc] (-6,-6) rectangle (6,6);
          \fill [bg] (-6,-6) rectangle (6,6);
          \fill [em=red] (-6,-6) rectangle (6,6);
          \draw [omon,blue] (0,-6) -- (0,6);
        \end{scope}
        \draw [bound] (-6,-6) rectangle (6,6);
        \node [label,blue,above] at (0,6) {$\emlift{\mT_2}$};
        \node [label,blue,below] at (0,-6) {$\phantom{\emlift{\mT_2}}$};
      \end{tikzpicture}
    }
  \]

  \noindent
  Dually, if $\mT_2$ has opalgebra object $\OpAlg{\mT_2}{\oS}$, then
  $\chi$ induces a monad $\kllift{\mT_1}$ on $\OpAlg{\mT_2}{\oS}$.
  \vspace{-5pt}
  
  \[\preq
    \mathclap{\kllift{\mT_1} \colon \OpAlg{\mT_2}{\oS} \to \OpAlg{\mT_2}{\oS}}
    \qqq
    \mathclap{
      \begin{tikzpicture}[yscale=.5]
        \begin{scope}
          \clip [boundc] (-6,-6) rectangle (6,6);
          \fill [bg] (-6,-6) rectangle (6,6);
          \fill [kl=blue] (-6,-6) rectangle (6,6);
          \draw [omon,red] (0,-6) -- (0,6);
        \end{scope}
        \draw [bound] (-6,-6) rectangle (6,6);
        \node [label,red,above] at (0,6) {$\kllift{\mT_1}$};
        \node [label,red,below] at (0,-6) {$\phantom{\kllift{\mT_1}}$};
      \end{tikzpicture}
    }
  \]
\end{proposition}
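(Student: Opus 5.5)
The plan is to use \cref{rem:mndmnd}: a distributive law is a monad in $\Mnd(\tX)$. Here $\mT_2$ is the underlying 1-cell, viewed as a monad lax 1-cell $\mT_1 \to \mT_1$ with structure 2-cell $\chi$. The multiplication $\mu_2 \colon \mT_2 \then \mT_2 \Rightarrow \mT_2$ and unit $\eta_2 \colon \id_{\oS} \Rightarrow \mT_2$ are monad 2-cells, and the monad laws hold in $\Mnd(\tX)$ because they hold in $\tX$. The lifted monad is then obtained by transporting this monad along the lifting construction of \cref{prop:formaltfae}.

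First, \cref{prop:formaltfae}~\labelcref{item:monadone} turns the monad lax 1-cell $(\mT_2,\chi)$ into a 1-cell $\emlift{\mT_2} \colon \Alg{\mT_1}{\oS} \to \Alg{\mT_1}{\oS}$ with $\emlift{\mT_2} \then \mcar{\mT_1} = \mcar{\mT_1} \then \mT_2$. The same result applied to the composite monad lax 1-cell $\mT_2 \then \mT_2$, whose structure is given as in \cref{prop:doublemonads}, gives a lift. By uniqueness of factorizations through $\mcar{\mT_1}$, this lift equals $\emlift{\mT_2} \then \emlift{\mT_2}$: both are factorings of the same $\mT_1$-module $\mcar{\mT_1} \then \mT_2 \then \mT_2$, and the module structure obtained from the composite lax structure agrees with the one obtained by pasting the two lifts. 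Likewise the lift of the identity monad lax 1-cell $\id_{\oS}$ is $\id_{\Alg{\mT_1}{\oS}}$.

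Second, \cref{prop:formaltfae}~\labelcref{item:monadtwo} lifts the monad 2-cells $\mu_2$ and $\eta_2$ to 2-cells
\[\emlift{\mu_2} \colon \emlift{\mT_2} \then \emlift{\mT_2} \Rightarrow \emlift{\mT_2}, \qquad \emlift{\eta_2} \colon \id \Rightarrow \emlift{\mT_2},\]
each characterized by $\emlift{\mu_2} \then \mcar{\mT_1} = \mcar{\mT_1} \then \mu_2$, and similarly for $\eta_2$. To check associativity and unit, whisker both sides of each required equation by $\mcar{\mT_1}$ on the right. Since these whiskerings commute with the lifts, both sides become $\mcar{\mT_1}$ whiskered with the corresponding monad law for $\mT_2$, so they agree. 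Then cancel $\mcar{\mT_1}$, which is faithful (as observed in the proof of \cref{prop:termresolve}), or equivalently use the uniqueness clause of the universal property for module maps.

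The main obstacle is the bookkeeping in the first step: checking that lifting is functorial with respect to composition of lax 1-cells and horizontal composition of 2-cells. Concretely, one must verify that the $\mT_1$-module structure on $\mcar{\mT_1} \then \mT_2 \then \mT_2$ induced by the composite $\chi$ equals the one making $\emlift{\mT_2} \then \emlift{\mT_2}$ factor it, and that the lift of $\mu_2$ then has the stated domain. Both follow from uniqueness in \cref{def:em} together with the explicit formula for the mated module structure in the proof of \cref{prop:formaltfae}.

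The opalgebra statement is the formal dual. A distributive law in $\tX$ becomes one in $\tX\op$ with the roles of $\mT_1$ and $\mT_2$ exchanged, using the colax half of \cref{def:distributive}. An opalgebra object in $\tX$ is an algebra object in $\tX\op$. So the first part applied in $\tX\op$ yields the monad $\kllift{\mT_1}$ on $\OpAlg{\mT_2}{\oS}$.
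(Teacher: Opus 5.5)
Your proposal is correct and takes the same route as the paper: you view the distributive law as a monad in $\Mnd(\tX)$ via \cref{rem:mndmnd} and transport it along the lifting of \cref{prop:formaltfae}, and you handle the dual by passing to $\tX\op$. The paper packages your bookkeeping (lifting commutes with composites and identities, and $\mcar{\mT_1}$ is faithful, which gives the monad laws) into \cref{rem:emarrow}: it reads the monad as a 2-functor $\tDelta \to \Mnd(\tX)$ landing in the full sub-2-category of $[\arrow, \tX]$ and then curries, whereas you verify these steps directly.
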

\begin{proof}
  By \cref{prop:formaltfae} and \cref{rem:emarrow}, assuming the
  relevant algebra objects exist, a functor of 2-categories
  $\tC \to \Mnd(\tX)$ amounts to a pair of functors $\tC \to \tX$ with
  a (strict) natural transformation between them whose components are
  1-cells of the form $\mcarT \colon \AlgTS \to \oS$. In the case that
  $\tC$ is the free 2-category $\tDelta$ containing a monad on an
  object, we find that a distributive law, viewed as a functor
  $\tDelta \to \Mnd(\tX)$ via \cref{rem:mndmnd}, consists of a monad
  $\mT_1$ on an object $\oS$, a monad $\mT_2$ on $\oS$, and a monad
  $\emlift{\mT_2}$ on $\Alg{\mT_1}{\oS}$ lifting $\mT_2$ along
  $\mcar{\mT_1} \colon \Alg{\mT_1}{\oS} \to \oS$.
\end{proof}


\pagebreak

\begin{proposition}\label{prop:dismonad}
  Let $\mT_1$ and $\mT_2$ be monads on $\oS$ with distributive law
  $\chi \colon \mT_2 \then \mT_1 \Rightarrow \mT_1 \then \mT_2$.
  \begin{enumerate}[label=(\roman*)]
  \item \label{item:dismonad}
    The composite $\mT_1 \then \mT_2$ is itself a monad on $\oS$.
    \vspace{-5pt}
    \[\preq
      \hspace{-\leftmargin}
      \mathclap{

      }
    \]
    
  \item\label{item:disthoms}
    The 2-cells $\mT_1 \then \eta \colon \mT_1 \Rightarrow \mT_1 \then
    \mT_2$ and $\eta \then \mT_2 \colon \mT_2 \Rightarrow \mT_1 \then
    \mT_2$ are both monad maps.
    \vspace{-5pt}
    \[\preq
      \hspace{-\leftmargin}
      \mathclap{
        %
      }
    \]
  \end{enumerate}
\end{proposition}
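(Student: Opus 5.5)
I will define the monad structure on $\mT_1 \then \mT_2$ directly. The multiplication is the composite
\[
\mT_1 \then \mT_2 \then \mT_1 \then \mT_2 \xRightarrow{\mT_1 \then \chi \then \mT_2} \mT_1 \then \mT_1 \then \mT_2 \then \mT_2 \xRightarrow{\mu_1 \then \mu_2} \mT_1 \then \mT_2 .
\]
The unit is $\eta_1 \then \eta_2 \colon \id_{\oS} \Rightarrow \mT_1 \then \mT_2$. In string diagrams, the multiplication crosses the inner $\mT_2$ strand over the inner $\mT_1$ strand via $\chi$, then merges the two red strands and the two blue strands. The unit is simply a red and a blue unit node side by side.

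The laws are checked by topological deformation, using the four distributive law axioms of \cref{def:distributive}.

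For \emph{associativity}, expand both bracketings of the triple product $(\mT_1\mT_2)(\mT_1\mT_2)(\mT_1\mT_2)$. Each side contains three crossings $\chi$ followed by merges. Take the side where the leftmost pair is multiplied first. The red merge node that occurs before a crossing can be slid past it using the first law: $\mT_2$ is a lax 1-cell $\mT_1 \to \mT_1$, so $\chi$ commutes with $\mu_1$. Symmetrically, the blue merge node is slid using the colax law for $\mT_1$, so $\chi$ commutes with $\mu_2$. After these slides, both sides become the same diagram: all three crossings first, then ternary merges of red and of blue. This uses associativity of $\mu_1$ and $\mu_2$ together with the interchange law.

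For the \emph{unit laws}, compose the unit on the left. The red unit node $\eta_1$ enters a crossing and can be pulled through it by the lax unit law, where $\chi$ composed with $\eta_1$ gives $\eta_1$. It then meets a red merge and cancels by the unit law of $\mT_1$. The blue unit node meets its merge directly and cancels. The right unit law is symmetric, using the colax unit law to pull $\eta_2$ through $\chi$.

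Alternatively, one could note via \cref{rem:mndmnd} that this is the composite of a monad in $\Mnd(\tX)$ with its underlying lax 1-cell. The direct check is simpler, so I will do that.

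For \labelcref{item:disthoms}, first consider $\mT_1 \then \eta_2$.
\begin{itemize}
\item \emph{Unit compatibility}: it sends $\eta_1$ to $\eta_1 \then \eta_2$, which holds by definition.
\item \emph{Multiplication compatibility}: on $\mT_1\mT_1$, we insert two blue units and then apply the composite multiplication. The inner blue unit passes through $\chi$ by the colax unit law and becomes a blue unit beside the red strand. The two blue units then merge into a single unit by the unit law of $\mT_2$. What remains is $\mu_1 \then \eta_2$.
\end{itemize}
The case of $\eta_1 \then \mT_2$ is the mirror argument, using the lax unit law of $\chi$.

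I expect the main obstacle to be the associativity bookkeeping. One must make sure that exactly the multiplicative lax and colax laws suffice to slide merge nodes through crossings. This needs care about which side the merges occur on: the red merge slides NW/SE through blue strands, and the blue merge slides the other way. Everything else is a routine unit-cancellation.
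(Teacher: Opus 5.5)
Your proposal is correct and matches the paper's proof. The paper likewise takes the multiplication $\mT_1 \then \chi \then \mT_2$ followed by $\mu_1 \then \mu_2$ and the unit $\eta_1 \then \eta_2$, checks the monad laws directly by string-diagram manipulation using the lax and colax laws of \cref{def:distributive}, and handles (ii) by passing units through $\chi$.

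One cosmetic point: under the paper's diagrammatic convention your left/right labels are swapped. Multiplying the leftmost pair first produces a blue merge, which passes the crossing via the colax law; only after that slide are there three crossings. Likewise, in the left unit law it is $\eta_2$ that passes through $\chi$, via the colax unit law. Since you treat both sides symmetrically, nothing is lost.
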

\begin{proof}
  For \labelcref{item:dismonad}, the multiplication is given by
  \vspace{-5pt}
  \[\preq
    \mathclap{
      %
    }
  \]
  and the unit is given by
  \vspace{-5pt}
  \[\preq
    \mathclap{
      %
    }
    \vspace{-5pt}
  \]
  We then have
  \vspace{-15pt}

  \[
    %
    \afterimage
  \]

  \noindent
  as desired. For \labelcref{item:disthoms}, the condition for
  multiplication is similarly straightforward to verify
  \vspace{-7.5pt}
  
  \[\preq
    \mathclap{
      %
    }
  \]
  and the condition for units is trivial.
\end{proof}




\begin{remark}
  The distributive law is recovered from the induced multiplication on
  $\mT_1 \then \mT_2$ by composing with a unit on both sides.
  \[\preq
    \mathclap{

    }
  \]
\end{remark}

\begin{proposition}\label{lem:distmodule}
  Let $\mT_1$ and $\mT_2$ be monads on $\oS$ with distributive law
  $\chi \colon \mT_2 \then \mT_1 \Rightarrow \mT_1 \then \mT_2$ and
  let $\bM \colon \oX \to \oS$ be an arbitrary 1-cell. Giving $\bM$
  the structure of a $(\mT_1 \then \mT_2)$-module
  \vspace{-5pt}
  \[\preq
    \mathclap{
      %
    }
    \vspace{-5pt}
  \]

  \noindent
  is equivalent to giving $\bM$ both the structure of a $\mT_1$-module
  and the structure of a $\mT_2$-module
  \[\preq
    \mathclap{
      %
    }
  \]
  satisfying the distributivity law
  \vspace{-5pt}
  \[\preq
    \mathclap{
      %
    }
  \]
\end{proposition}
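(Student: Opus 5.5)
I would construct the two translations explicitly and check that they are mutually inverse. Write $\mu$ for the multiplication of $\mT_1 \then \mT_2$ from \cref{prop:dismonad}, namely $\mT_1 \then \mT_2 \then \mT_1 \then \mT_2 \Rightarrow \mT_1 \then \mT_1 \then \mT_2 \then \mT_2 \Rightarrow \mT_1 \then \mT_2$: first $\chi$ in the middle, then both multiplications.

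For one direction, start from a $(\mT_1 \then \mT_2)$-module $\rho \colon \bM \then \mT_1 \then \mT_2 \Rightarrow \bM$. Restrict it along the monad maps of \cref{prop:dismonad}~\labelcref{item:disthoms}, i.e.\ precompose with the units $\eta_2$ or $\eta_1$. This gives $\rho_1 \colon \bM \then \mT_1 \Rightarrow \bM$ and $\rho_2 \colon \bM \then \mT_2 \Rightarrow \bM$. Restriction of modules along monad maps preserves the module laws; this is a direct string-diagram check using the monad map laws. The distributivity law asks that $(\rho_2 \then \mT_1)\,;\,\rho_1$ equal $(\bM \then \chi)\,;\,(\rho_1 \then \mT_2)\,;\,\rho_2$. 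To get it, express both sides as $\rho$ precomposed with units. Use the associativity of $\rho$ to merge the two actions into one application of $\mu$. Then use the unit laws of $\mT_1$ and $\mT_2$, together with the unit laws of $\chi$ (the unit axioms in \cref{def:distributive}), to simplify $\mu$ with units inserted. The left side reduces to $\rho$ composed with $\chi$ followed by units, and the right side reduces to the same thing.

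For the other direction, given $\rho_1$ and $\rho_2$ satisfying distributivity, define $\rho \coloneqq (\rho_1 \then \mT_2)\,;\,\rho_2$. The unit law for $\rho$ follows at once from the unit laws of $\rho_1$ and $\rho_2$. For associativity, expand $\rho$ applied twice: the string is $\bM\,\mT_1\,\mT_2\,\mT_1\,\mT_2$ acted on by $\rho_1$, then $\rho_2$, then $\rho_1$, then $\rho_2$. Apply the distributivity law to the middle step, where $\rho_2$ is followed by $\rho_1$, rewriting it as $\chi$ followed by $\rho_1$ then $\rho_2$. Then use associativity of $\rho_1$ to combine the two $\mT_1$ actions into $\mu_1$, and associativity of $\rho_2$ to combine the two $\mT_2$ actions into $\mu_2$. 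The result is exactly $(\bM \then \mu)\,;\,\rho$. In the paper's pictures, this is sliding the $\mT_2$ film past the $\mT_1$ strand via the crossing $\chi$.

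Finally, check that the two constructions are inverse.
\begin{itemize}
\item Starting from $\rho_1$ and $\rho_2$, restricting $(\rho_1 \then \mT_2)\,;\,\rho_2$ along the units returns $\rho_1$ and $\rho_2$, by the unit laws.
\item Starting from $\rho$, the composite $(\rho_1 \then \mT_2)\,;\,\rho_2$ equals $\rho$ precomposed with $\eta_2$ and $\eta_1$ inserted between $\mT_1$ and $\mT_2$, then with $\mu$. By the unit law of $\chi$ and the monad unit laws, that instance of $\mu$ collapses to the identity on $\mT_1 \then \mT_2$, so one application of associativity of $\rho$ returns $\rho$.
\end{itemize}

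I expect the main obstacle to be deriving the distributivity law in the first direction. It requires correctly using the fact that $\chi$ is recovered from $\mu$ by composing with units on both sides (the remark after \cref{prop:dismonad}). The rest is routine bookkeeping.
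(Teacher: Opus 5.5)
Your proposal is correct and follows essentially the paper's argument. You restrict along the monad maps $\mT_1 \then \eta$ and $\eta \then \mT_2$ of \cref{prop:dismonad}, define the converse by composing the two actions, check both round trips with unit laws, and match associativity of the composite action with the distributivity law. The only minor difference is that you derive distributivity for the restricted actions by a direct computation. The paper instead deduces it from two facts: the round trip on $(\mT_1 \then \mT_2)$-modules is the identity, and associativity of the composite action is equivalent to distributivity.
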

\begin{proof}
  Any $(\mT_1 \then \mT_2)$-module structure on $\bM$ induces both
  $\mT_1$-module structure and $\mT_2$-module structure on $\bM$ via
  the monad maps from \cref{prop:dismonad}~\labelcref{item:disthoms}.
  \[\preq
    \hspace{-\leftmargin}
    \mathclap{
      %
    }
  \]
  
  Conversely, given a pair of a $\mT_1$-module structure 2-cell and a
  $\mT_2$-module structure 2-cell we may attempt to define a
  $(\mT_1 \then \mT_2)$-module structure 2-cell as their composite.
  \[\preq
    \hspace{-\leftmargin}
    \mathclap{
      %
    }
  \]

  Translating from such a pair to a putative
  $(\mT_1 \then \mT_2)$-module structure and back again yields the
  original pair by the $\mT_1$-module and $\mT_2$-module unit
  laws. The roundtrip on $(\mT_1 \then \mT_2)$-modules is also
  identity.
  \vspace{-15pt}

  \[
    %
    \hspace{7.5pt}
    \afterimage
  \]

  \noindent
  The $(\mT_1 \then \mT_2)$-module associativity law then corresponds to
  the distributivity law
  \vspace{-5pt}

  \[
    \mathclap{
      %
    }
    \afterimage
  \]

  \noindent
  and the $(\mT_1 \then \mT_2)$-module unit law holds automatically,
  corresponding to the $\mT_1$-module and $\mT_2$-module unit laws.
\end{proof}

\begin{proposition}\label{prop:distem}
  Given a distributive law of $\mT_1$ over $\mT_2$, if $\mT_1$ has an
  algebra object $\Alg{\mT_1}{\oS}$, then an algebra object
  $\Alg{\emlift{\mT_2}}{(\Alg{\mT_1}{\oS})}$ of $\emlift{\mT_2}$ is
  the same as an algebra object $\Alg{(\mT_1 \then \mT_2)}{\oS}$ of
  $\mT_1 \then \mT_2$.
  \[\preq
    \mathclap{
      \Alg{\emlift{\mT_2}}{(\Alg{\mT_1}{\oS})} \cong \Alg{(\mT_1 \then \mT_2)}{\oS} 
    }
    \qqq
    \mathclap{
      \begin{tikzpicture}[scale=.75]
        \begin{scope}
          \clip [boundc] (-6,-6) rectangle (6,6);
          \fill [bg] (-6,-6) rectangle (6,6);
          \fill [emoff=red] (-6,-6) rectangle (6,6);
          \fill [emon=blue] (-6,-6) rectangle (6,6);
        \end{scope}
        \draw [bound] (-6,-6) rectangle (6,6);
      \end{tikzpicture}
    }
  \]
  In this case $\mcar{\emlift{\mT_2}} \then \mcar{\mT_1}$ is a
  universal $(\mT_1 \then \mT_2)$-module. (In other words, the
  composite of the \emph{monadic} 1-cells $\mcar{\emlift{\mT_2}}$ and
  $\mcar{\mT_1}$ is monadic.)
  
  Dually, if $\mT_2$ has an opalgebra object $\OpAlg{\mT_2}{\oS}$ we
  have
  \[\preq
    \mathclap{
      \OpAlg{\kllift{\mT_1}}{(\OpAlg{\mT_2}{\oS})} \cong \OpAlg{(\mT_1 \then \mT_2)}{\oS} 
    }
    \qqq
    \mathclap{
      \begin{tikzpicture}[scale=.75]
        \begin{scope}
          \clip [boundc] (-6,-6) rectangle (6,6);
          \fill [bg] (-6,-6) rectangle (6,6);
          \fill [klon=red] (-6,-6) rectangle (6,6);
          \fill [kloff=blue] (-6,-6) rectangle (6,6);
        \end{scope}
        \draw [bound] (-6,-6) rectangle (6,6);
      \end{tikzpicture}
    }
  \]
  assuming either side exists.
\end{proposition}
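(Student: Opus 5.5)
Both sides are objects representing a category of modules, so I will show that $\mcar{\emlift{\mT_2}} \then \mcar{\mT_1}$, with a suitable structure 2-cell, is a universal $(\mT_1 \then \mT_2)$-module. The approach is to build a natural (in $\oX$) isomorphism of categories: $\emlift{\mT_2}$-modules $\oX \to \Alg{\mT_1}{\oS}$ with their module maps on one side, and $(\mT_1 \then \mT_2)$-modules $\oX \to \oS$ with their module maps on the other. Once this isomorphism exists, the universal properties of \cref{def:em} coincide, so each algebra object exists iff the other does, and they are identified.

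First, by \cref{lem:distmodule}, a $(\mT_1 \then \mT_2)$-module structure on $\bM \colon \oX \to \oS$ is the same as a $\mT_1$-module structure $\rho_1$ together with a $\mT_2$-module structure $\rho_2$ satisfying the distributivity law. I will also check the corresponding statement for maps: a 2-cell is a $(\mT_1\then\mT_2)$-module map iff it is both a $\mT_1$-module map and a $\mT_2$-module map. This follows from the explicit formulas in that proof, using precomposition with units in one direction and the composite formula in the other.

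Second, I translate along the universal property of $\Alg{\mT_1}{\oS}$. A $\mT_1$-module structure on $\bM$ is the same as a 1-cell $\lift{\bM} \colon \oX \to \Alg{\mT_1}{\oS}$. The remaining datum $\rho_2$, subject to the distributivity law, should be the same as an $\emlift{\mT_2}$-module structure on $\lift{\bM}$. Here I use the description of $\emlift{\mT_2}$ from the previous proposition, namely the lift of the monad lax 1-cell $(\mT_2,\chi)$ via \cref{prop:formaltfae}. That description gives $\emlift{\mT_2} \then \mcar{\mT_1} = \mcar{\mT_1} \then \mT_2$. Its multiplication and unit are the lifts of those of $\mT_2$, which are monad 2-cells by \cref{rem:mndmnd}, and hence specializations by \cref{rem:tospec}.

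The key step is then: a 2-cell $\lift{\bM} \then \emlift{\mT_2} \Rightarrow \lift{\bM}$ is, by \cref{def:em}, the same as a $\mT_1$-module map $\bM \then \mT_2 \Rightarrow \bM$. Here the domain carries the $\mT_1$-module structure induced by $\chi$, as in the remark following \cref{prop:formaltfae}. Unwinding that structure, the $\mT_1$-module map condition on $\rho_2$ is exactly the distributivity law. Faithfulness of $\mcar{\mT_1}$ (from the proof of \cref{prop:termresolve}) transports the associativity and unit laws in both directions, and maps are handled the same way.

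The hardest part will be verifying that the $\mT_1$-module structure on $\bM \then \mT_2$ obtained from $\lift{\bM} \then \emlift{\mT_2}$ really is $\rho_1$ composed with $\chi$, so that the module-map law matches the distributivity law on the nose. I would check this by whiskering with $\mcar{\mT_1}$ and using the defining equation of $\emlift{\fF}$ in \cref{con:expand}.

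Finally, naturality in $\oX$ is immediate, since every step is given by composition or by a universal property. Tracking the universal module through the bijection identifies it as $\mcar{\emlift{\mT_2}} \then \mcar{\mT_1}$, with structure 2-cell assembled from $\mcarmod{\emlift{\mT_2}}$ and $\mcarmod{\mT_1}$ as in \cref{lem:distmodule}. The opalgebra statement follows by applying all of this in $\tX\op$.
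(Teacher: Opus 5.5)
Your proposal is correct and follows essentially the same route as the paper. Both arguments decompose a $(\mT_1 \then \mT_2)$-module via \cref{lem:distmodule}, note that the distributivity law says exactly that $\rho_2 \colon \bM \then \mT_2 \Rightarrow \bM$ is a $\mT_1$-module map and so lifts along $\mcar{\mT_1}$ to an $\emlift{\mT_2}$-action, and use faithfulness of $\mcar{\mT_1}$ for the $\emlift{\mT_2}$-module laws. The differences are only in packaging: the paper obtains the forward direction by exhibiting $\mcar{\mT_1}$ as a monad lax 1-cell $\emlift{\mT_2} \to \mT_1 \then \mT_2$, while you build the correspondence directly and also spell out module maps and naturality in $\oX$, which the paper leaves implicit.
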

\begin{proof}
  First, observe that $\mcar{\mT_1}$ is a monad lax 1-cell from
  $\emlift{\mT_2}$ to $\mT_1 \then \mT_2$ with structure 2-cell
  \[\preq
    \mathclap{
      \begin{tikzpicture}
        \node [ob] (s) at (-7,0) {$\Alg{\mT_1}{\oS}$};
        \node [ob] (u) at (0,7) {$\oS$};
        \node [ob] (t) at (7,0) {$\oS$};
        \node [ob] (tt) at (14,-7) {$\oS$};
        \node [ob] (dt) at (7,-14) {$\Alg{\mT_1}{\oS}$};
        \draw [a] (s) -- node[arr,al] {$\mcar{\mT_1}$} (u);
        \draw [a] (u) -- node[arr,ar] {$\mT_1$} (t);
        \draw [a] (s) -- node[arr,bcl] {$\mcar{\mT_1}$} (t);
        \draw [a] (t) -- node[arr,ar] {$\mT_2$} (tt);
        \draw [a] (s) -- node[arr,bl] {$\emlift{\mT_2}$} (dt);
        \draw [a] (dt) -- node[arr,br] {$\mcar{\mT_1}$} (tt);
        \node [cell] at (0,2.5) {$\mcarmod{\mT_1}$};
        \node [cell] at (5,-5) {$=$};
      \end{tikzpicture}
    }
    \qqq
    \mathclap{
      \begin{tikzpicture}
        \begin{scope}
          \clip [boundc] (-7,-7) rectangle (7,7);
          \fill [bg] (-7,-7) rectangle (7,7);
          \renewcommand{\mypath}{(-4,7) .. controls +(0,-2.5) and +(-1,1) .. (-2,2.5) .. controls +(0,-5) and +(0,5) .. (3,-7)}
          \fill [em=red] \mypath -- (-7,-7) -- (-7,7) -- cycle;
          \draw [omon,red] (0,7) .. controls +(0,-2.5) and +(1,1) .. (-2,2.5);
          \draw [oem,red] \mypath;
          \draw [omon,blue] (4,7) .. controls +(0,-9) and +(0,5) .. (-2.5,-7);
        \end{scope}
        \draw [bound] (-7,-7) rectangle (7,7);
      \end{tikzpicture}
    }
  \]

  \noindent
  using the definition of $\emlift{\mT_2}$ and its multiplication
  $\emlift{\mu} \colon \emlift{\mT_2} \then \emlift{\mT_2} \Rightarrow
  \emlift{\mT_2}$ and unit
  $\emlift{\eta} \colon \id_{\Alg{\mT_1}{\oS}} \Rightarrow
  \emlift{\mT_2}$
  \[\preq
    \mathclap{
      \begin{tikzpicture}[yscale=1.125,xslant=-.5]
        \node [ob] (s) at (-7,0) {$\Alg{\mT_1}{\oS}$};
        \node [ob] (u) at (0,7) {$\oS$};
        \node [ob] (t) at (7,0) {$\Alg{\mT_1}{\oS}$};
        \node [ob] (tt) at (14,-7) {$\oS$};
        \node [ob] (dt) at (7,-14) {$\Alg{\mT_1}{\oS}$};
        \node [ob] (ut) at (7,7) {$\oS$};
        \node [ob] (utt) at (14,0) {$\oS$};
        \node [ob] (uutt) at (14,7) {$\oS$};
        \draw [a] (s) -- node[arr,al] {$\mcar{\mT_1}$} (u);
        \draw [a] (s) -- node[arr,brcl] {$\mcar{\mT_1}$} (ut);
        \draw [a] (s) -- node[arr,bcl] {$\emlift{\mT_2}$} (t);
        \draw [a] (t) -- node[arr,alclcl] {$\mcar{\mT_1}$} (uutt);
        \draw [a] (t) -- node[arr,blcl,pos=.4] {$\emlift{\mT_2}$} (dt);
        \draw [a] (s) -- node[arr,bl] {$\emlift{\mT_2}$} (dt);
        \draw [a] (dt) -- node[arr,br] {$\mcar{\mT_1}$} (tt);
        \draw [a] (u) -- node[arr,above] {$\mT_1$} (ut);
        \draw [a] (ut) -- node[arr,above] {$\mT_2$} (uutt);
        \draw [a] (uutt) -- node[arr,ar] {$\mT_1$} (utt);
        \draw [a] (utt) -- node[arr,right] {$\mT_2$} (tt);
        \draw [a] (t) -- node[arr,bcl] {$\mT_2$} (utt);
        \node [cell] at (3,-4) {$\emlift{\mu}$};
        \node [cell] at (10.5,-4) {$=$};
        \node [cell] at (5.5,3.5) {$=$};
        \node [cell] at (1.25,5.25) {$\mcarmod{\mT_1}$};
        \node [cell] at (11.75,2.25) {$\mcarmod{\mT_1}$};
      \end{tikzpicture}
      \;\,\eqspace{-7.5pt}\;
      \begin{tikzpicture}[yscale=1.125,xslant=-.5]
        \node [ob] (s) at (-7,0) {$\Alg{\mT_1}{\oS}$};
        \node [ob] (u) at (0,7) {$\oS$};
        \node [ob] (t) at (7,0) {$\oS$};
        \node [ob] (tt) at (14,-7) {$\oS$};
        \node [ob] (dt) at (7,-14) {$\Alg{\mT_1}{\oS}$};
        \node [ob] (ut) at (7,7) {$\oS$};
        \node [ob] (utt) at (14,0) {$\oS$};
        \node [ob] (uutt) at (14,7) {$\oS$};
        \draw [a] (s) -- node[arr,al] {$\mcar{\mT_1}$} (u);
        \draw [a] (u) -- node[arr,blclcl,pos=.65] {$\mT_1$} (t);
        \draw [a] (s) -- node[arr,bcl] {$\mcar{\mT_1}$} (t);
        \draw [a] (t) -- node[arr,blclcl] {$\mT_2$} (tt);
        \draw [a] (s) -- node[arr,bl] {$\emlift{\mT_2}$} (dt);
        \draw [a] (dt) -- node[arr,br] {$\mcar{\mT_1}$} (tt);
        \draw [a] (u) -- node[arr,above] {$\mT_1$} (ut);
        \draw [a] (ut) -- node[arr,above] {$\mT_2$} (uutt);
        \draw [a] (uutt) -- node[arr,ar] {$\mT_1$} (utt);
        \draw [a] (utt) -- node[arr,right] {$\mT_2$} (tt);
        \draw [a] (ut) -- node[arr,arcl] {$\mT_1$} (t);
        \draw [a] (t) -- node[arr,acl] {$\mT_2$} (utt);
        \node [cell] at (-.25,3) {$\mcarmod{\mT_1}$};
        \node [cell] at (4.5,-4.5) {$=$};
        \node [cell] at (10.5,3.5) {$\chi$};
        \node [cell] at (4.75,4.75) {$\mu$};
        \node [cell] at (11.75,-2.25) {$\mu$};
      \end{tikzpicture}
    }
    \qqq
    \mathclap{
      \begin{tikzpicture}[xscale=.8,yscale=.77777]
        \begin{scope} 
          \clip [boundc] (-13,-16.5) rectangle (13,9);
          \fill [bg] (-13,-17) rectangle (13,9);
          \renewcommand{\mypath}{(-6,9) .. controls +(0,-2.5) and +(-1,1) .. (-4,4.5) .. controls +(0,-5) and +(-2,2) .. (0,-4.5) .. controls +(0,-5) and +(0,5) .. (4.5,-15) -- (4.5,-17)}
          \fill [em=red] \mypath -- (-13,-17) -- (-13,9) -- cycle;
          \draw [omon,red] (-2,9) .. controls +(0,-2.5) and +(1,1) .. (-4,4.5);
          \draw [omon,red] (6,9) .. controls +(0,-8.5) and +(1,1) .. (0,-4.5);
          \draw [oem,red] \mypath;
          \coordinate (mypoint) at (-6.5,-12.5);
          \node [tmon,blue] at (mypoint) {};
          \draw [omon,blue] (2,9) .. controls +(0,-5.5) and +(1,1) .. (-2,0) .. controls +(-3.5,-3.5) and +(-2.5,5) .. (mypoint);
          \draw [omon,blue] (10,9) .. controls +(0,-11.5) and +(1,1) .. (2,-9) .. controls +(-1.5,-1.5) and +(4,.5) .. (mypoint);
          \draw [omon,blue] (mypoint) .. controls +(-1,-1) and +(0,1) .. (-9,-17);
        \end{scope}
        \draw [bound] (-13,-16.5) rectangle (13,9);
      \end{tikzpicture}
      \eqq
      \begin{tikzpicture}[xscale=1.75,yscale=1.25]
        \begin{scope}
          \clip [boundc] (-5.25,-9) rectangle (6.5,7);
          \fill [bg] (-7,-9) rectangle (7,7);
          \renewcommand{\mypath}{(-3.5,7) .. controls +(0,-4.5) and +(-1,1) .. (0,-1) .. controls +(0,-3) and +(0,3.5) .. (2,-7.5) -- (2,-9)}
          \fill [em=red] \mypath -- (-7,-9) -- (-7,7) -- cycle;
          \draw [omon,red] (-.9,7) .. controls +(0,-2.5) and +(-1,1) .. (1.1,2.5);
          \draw [omon,red] (1.1,2.5) .. controls +(0,-1) and +(1,1) .. (0,-1);
          \node [tmon,red] at (1.1,2.5) {};
          \draw [oem,red] \mypath;
          \draw [omon,blue] (.9,7) .. controls +(0,-2.5) and +(-1,1) .. (2.9,2.5);
          \draw [omon,blue] (4.9,7) .. controls +(0,-2.5) and +(1,1) .. (2.9,2.5);
          \draw [omon,blue] (2.9,2.5) .. controls +(0,-3.5) and +(1,1) .. (1,-4) .. controls +(-2,-2) and +(0,3.5) .. (-2.5,-10.5);
          \node [tmon,blue] at (2.9,2.5) {};
          \clip (-7,-7) -- (-7,3.48) -- (7,3.48) -- (7,7) -- (2,7) -- (2,-7) -- cycle;
          \draw [omon,red] (3.1,7) .. controls +(0,-2.5) and +(1,1) .. (1.1,2.5);
        \end{scope}
        \draw [bound] (-5.25,-9) rectangle (6.5,7);
      \end{tikzpicture}
    }
    \vspace{-15pt}
  \]

  \[\preq
    \mathclap{
      \begin{tikzpicture}[yscale=1.125,xslant=-.5]
        \node [ob] (s) at (-7,0) {$\Alg{\mT_1}{\oS}$};
        \node [ob] (u) at (0,7) {$\oS$};
        \node [ob] (tt) at (14,-7) {$\oS$};
        \node [ob] (dt) at (7,-14) {$\Alg{\mT_1}{\oS}$};
        \draw [a] (s) -- node[arr,al] {$\mcar{\mT_1}$} (u);
        \draw [a] (s) -- node[arr,bl] {$\emlift{\mT_2}$} (dt);
        \draw [eq] (s) .. controls +(10,3) and +(3,10) .. (dt);
        \draw [a] (dt) -- node[arr,br] {$\mcar{\mT_1}$} (tt);
        \draw [eq] (u) .. controls +(10,3) and +(3,10) .. (tt);
        \node [cell] at (2.5,-4.5) {$\emlift{\eta}$};
        \node [cell] at (8.5,1.5) {$=$};
      \end{tikzpicture}
      \;\,\eqspace{-7.5pt}\;
      \begin{tikzpicture}[yscale=1.125,xslant=-.5]
        \node [ob] (s) at (-7,0) {$\Alg{\mT_1}{\oS}$};
        \node [ob] (u) at (0,7) {$\oS$};
        \node [ob] (t) at (7,0) {$\oS$};
        \node [ob] (tt) at (14,-7) {$\oS$};
        \node [ob] (dt) at (7,-14) {$\Alg{\mT_1}{\oS}$};
        \draw [a] (s) -- node[arr,al] {$\mcar{\mT_1}$} (u);
        \draw [a] (u) -- node[arr,blclcl,pos=.65] {$\mT_1$} (t);
        \draw [a] (s) -- node[arr,bcl] {$\mcar{\mT_1}$} (t);
        \draw [a] (t) -- node[arr,blclcl] {$\mT_2$} (tt);
        \draw [a] (s) -- node[arr,bl] {$\emlift{\mT_2}$} (dt);
        \draw [a] (dt) -- node[arr,br] {$\mcar{\mT_1}$} (tt);
        \draw [eq] (u) .. controls +(6,2.5) and +(2.5,6) .. (t);
        \draw [eq] (t) .. controls +(6,2.5) and +(2.5,6) .. (tt);
        \node [cell] at (0,2.5) {$\mcarmod{\mT_1}$};
        \node [cell] at (4.5,-4.5) {$=$};
        \node [cell] at (5,5) {$\eta$};
        \node [cell] at (12,-2) {$\eta$};
      \end{tikzpicture}
    }
    \qqq
    \mathclap{
      \begin{tikzpicture}[xscale=.8,yscale=.77777]
        \begin{scope} 
          \clip [boundc] (-13,-16.5) rectangle (13,9);
          \fill [bg] (-13,-17) rectangle (13,9);
          \renewcommand{\mypath}{(-7,9) .. controls +(0,-12) and +(0,12) .. (4.5,-15) -- (4.5,-17)}
          \fill [em=red] \mypath -- (-13,-17) -- (-13,9) -- cycle;
          \draw [oem,red] \mypath;
          \node [tmon,blue] at (-6,-10) {};
          \draw [omon,blue] (-6,-10) -- (-6,-17);
        \end{scope}
        \draw [bound] (-13,-16.5) rectangle (13,9);
      \end{tikzpicture}
      \eqq
      \begin{tikzpicture}[xscale=1.75,yscale=1.25]
        \begin{scope}
          \clip [boundc] (-5.25,-9) rectangle (6.5,7);
          \fill [bg] (-7,-9) rectangle (7,7);
          \renewcommand{\mypath}{(-2.5,7) .. controls +(0,-4.5) and +(-1,1) .. (0,-1) .. controls +(0,-3) and +(0,3.5) .. (2,-7.5) -- (2,-9)}
          \fill [em=red] \mypath -- (-7,-9) -- (-7,7) -- cycle;
          \draw [omon,red] (1.1,2.5) .. controls +(0,-1) and +(1,1) .. (0,-1);
          \node [tmon,red] at (1.1,2.5) {};
          \draw [oem,red] \mypath;
          \draw [omon,blue] (2.9,2.5) .. controls +(0,-3.5) and +(1,1) .. (1,-4) .. controls +(-2,-2) and +(0,3.5) .. (-2.5,-10.5);
          \node [tmon,blue] at (2.9,2.5) {};
        \end{scope}
        \draw [bound] (-5.25,-9) rectangle (6.5,7);
      \end{tikzpicture}
    }
  \]

  We would like to show that the induced mapping from
  $\emlift{\mT_2}$-modules to $(\mT_1 \then \mT_2)$-modules is a
  natural isomorphism, and thus $\mcar{(\mT_1 \then\mT_2)}$ and
  $\mcar{\emlift{\mT_2}} \then \mcar{\mT_1}$ satisfy the same
  universal property.

  Given a $(\mT_1 \then\mT_2)$-module $\bM \colon \oX \to \oS$, we
  obtain $\mT_1$-module and $\mT_2$-module structures on $\bM$ as in
  \cref{lem:distmodule}. The composite $\bM \then \mT_2$ is also a
  $\mT_1$-module since $\mT_2$ is a monad lax 1-cell, and the
  distributivity law from \cref{lem:distmodule} says precisely that
  $\rho \colon \bM \then \mT_2 \Rightarrow \bM$ is a $\mT_1$-module
  map. We therefore obtain a 1-cell
  $\lift{\bM} \colon \oX \to \Alg{\mT_1}{\oS}$ and a 2-cell
  $\lift{\bM \then \mT_2} = \emlift{\bM} \then \emlift{\mT_2}
  \Rightarrow \emlift{\bM}$ using the universal property of the
  algebra object $\Alg{\mT_1}{\oS}$. The $\emlift{\mT_2}$-module laws
  for $\emlift{\bM}$ follow from the $\mT_2$-module laws for $\bM$,
  since $\mcar{\mT_1}$ may be cancelled on the right of 2-cells (i.e.\
  $\mcar{\mT_1}$ is faithful) by its universal property. Thus every
  $(\mT_1 \then \mT_2)$-module $\bM$ arises from a
  $\emlift{\mT_2}$-module, which by its definition is the unique one
  that gives rise to $\bM$.

  The corresponding fact about opmodules
  $\OpAlg{\kllift{\mT_1}}{(\OpAlg{\mT_2}{\oS})} \cong \OpAlg{(\mT_1
    \then \mT_2)}{\oS}$ is dual.
\end{proof}

\begin{remark}\label{rem:distemkl}
  In the following, assume algebra objects and opalgebra objects exist
  as necessary. By \cref{prop:formaltfae} and \cref{con:expand}, the
  fact that $\mT_2$ is a monad lax 1-cell $\mT_1 \to \mT_1$ allows us
  to expand the distributive law structure 2-cell as
  \[\preq
    \mathclap{

    }
  \]

  \noindent
  The multiplication $\mT_2 \then \mT_2 \Rightarrow \mT_2$ and
  unit $\id_{\oS} \Rightarrow \mT_2$, being monad 2-cells, also lift
  as\[\preq
    \mathclap{
      %
    }
  \]
  
  \noindent
  Here we see $\mcar{\mT_1}$ is a monad colax 1-cell
  $\emlift{\mT_2} \to \mT_2$ (as well as lax, since its structure
  2-cell is invertible). Similarly $\mlcar{\mT_1}$ is a monad colax
  1-cell $\mT_2 \to \emlift{\mT_2}$ (with structure 2-cell mate to the
  inverse of the previous), allowing the further
  expansion\footnote{This arrangement of adjoints is different from
    the \emph{distributive adjoint situations} considered
    in~\cite{beck:distributive}, which involve only algebra objects
    and no opalgebra objects.}  \vspace{-5pt}
  \[\preq
    \mathclap{
      %
    }
  \]

  Alternatively, we may proceed in the other order, writing
  \vspace{-5pt}
  \[
    \hspace{20pt}
    %
    \afterimage
    \vspace{7.5pt}
  \]

  The objects $\OpAlg{\emlift{\mT_2}}{(\Alg{\mT_1}{\oS})}$ and
  $\Alg{\kllift{\mT_1}}{(\OpAlg{\mT_2}{\oS})}$ are not equivalent in
  general, but there is a 1-cell
  between them, given by applying \cref{prop:bimtfae} to the following
  $(\emlift{\mT_2}, \kllift{\mT_1})$-bimodule.
  \[\preq
    \mathclap{
      %
    }
  \]
\end{remark}

\pagebreak
\chapter{Codensity monads}

\begin{definition}\label{def:codensitymonad}
  A \emph{codensity monad}\footnote{The definition of codensity monad
    makes sense in a virtual 2-category.} (or
  \emph{endomorphism monad}) of a 1-cell
  \[\preq
    \mathclap{\fX\colon \oX \to \oS}
    \qqq
    \mathclap{
      %
    }
  \]

  \noindent
  That is, $\wmX$ is a \emph{right extension of $\fX$ along itself}.
  Explicitly, this means that for any cells \vspace{-10pt}
  \[\preq
    \mathclap{\fY\colon \oS \to \oS}
    \qqq
    \mathclap{
      %
    }
  \]
\end{definition}

\begin{convention}
  In string diagrams, we write the codensity monad $\wmX$ of
  $\fX \colon \oX \to \oS$ as a string the color of $\oX$ surrounded
  on both sides by a lip of $\fX$, and we write the universal 2-cell
  $\mXmod$ as a splitting of $\wmX$ into strings of $\fX$, to suggest
  the universal property via peeling up and pulling down the lip on
  the left side.

  As always, these are a priori merely textures we are using to
  display the cells, and only the manipulations given in the
  definition are allowed.
  %
  %

\end{convention}

\pagebreak

\begin{proposition}\label{prop:cismonad}
  Let $\fX \colon \oX \to \oS$ be a 1-cell with codensity monad
  $\wmX$.
  \begin{enumerate}[label=(\roman*)]
  \item\label{item:ismonad} The codensity monad $\wmX$ is a monad. That
    is, there exist canonical 2-cells
    \[\preq
      \hspace{-\leftmargin}
      \mathclap{
        %
      }
    \]
    satisfying the monad laws.
    
  \item\label{item:ismodule} The universal 2-cell
    \[\preq
      \hspace{-\leftmargin}
      \mathclap{
        %
      }
    \]
    exhibits $\fX$ as a $\wmX$-module.
    
  \item\label{item:unimonad} The codensity monad $\wmX$ is a
    \emph{universal monad for which $\fX$ is a module}. Explicitly,
    for any monad
    \vspace{-5pt}
    \[\preq
      \hspace{-\leftmargin}
      \mathclap{\mT \colon \oS \to \oS}
      \qqq
      \mathclap{
        %
      }
      \vspace{-5pt}
    \]
    and $\mT$-module structure on $\fX$
    \[\preq
      \hspace{-\leftmargin}
      \mathclap{
        %
      }
    \]
    the 2-cell induced by the universal property of $\wmX$
    \[\preq
      \hspace{-\leftmargin}
      \mathclap{
        %
      }
    \]
    is a monad map $\mT \to \wmX$.
  \end{enumerate}
\end{proposition}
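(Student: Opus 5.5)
Write the universal property of the codensity monad as follows: $\mXmod \colon \fX \then \wmX \Rightarrow \fX$ is a right extension of $\fX$ along $\fX$. So for every $\fY \colon \oS \to \oS$ and every 2-cell $\alpha \colon \fX \then \fY \Rightarrow \fX$ there is a unique $\alpha^\flat \colon \fY \Rightarrow \wmX$ with $\alpha = (\fX \then \alpha^\flat) \cdot \mXmod$. In particular, two 2-cells $\beta,\beta' \colon \fY \Rightarrow \wmX$ are equal as soon as their composites with $\mXmod$ (after whiskering by $\fX$ on the left) agree. Every equation below will be proved by this cancellation, with $\fY$ taken to be $\wmX \then \wmX \then \wmX$, $\wmX$, $\mT \then \mT$ or $\id_{\oS}$.

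For \labelcref{item:ismonad,item:ismodule} together, define the multiplication $\mu$ as the unique factorization of the composite
\[
  \fX \then \wmX \then \wmX \xRightarrow{\mXmod \then \wmX} \fX \then \wmX \xRightarrow{\mXmod} \fX ,
\]
so $\fX\mu$ followed by $\mXmod$ equals $\mXmod$ applied twice. Define the unit $\eta$ as the factorization of the identity $\fX \then \id_{\oS} = \fX$. These defining equations are exactly the associativity and unit module laws for $\mXmod$, which gives \labelcref{item:ismodule} once \labelcref{item:ismonad} is known.

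For the monad laws, whisker the two sides of associativity, $\mu(\mu\wmX)$ and $\mu(\wmX\mu)$, by $\fX$ and postcompose with $\mXmod$. Using the defining equation of $\mu$ repeatedly, together with interchange to slide $\mXmod$ past the later $\wmX$ factors, both sides reduce to three successive applications of $\mXmod$. Uniqueness of factorization then gives associativity. The two unit laws go the same way: both $\mu(\eta\wmX)$ and $\mu(\wmX\eta)$ become $\mXmod$ after postcomposition, and so does the identity on $\wmX$, so both equal the identity. In string diagrams this is precisely peeling the lip.

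For \labelcref{item:unimonad}, let $\rho \colon \fX \then \mT \Rightarrow \fX$ be the given module structure and let $\phi = \rho^\flat \colon \mT \Rightarrow \wmX$. For the multiplication law, compare $\phi(\mu_{\mT})$ with $\mu(\phi\phi)$ after whiskering by $\fX$ and postcomposing $\mXmod$:
\begin{itemize}
\item the first becomes $\rho$ applied after $\fX\mu_{\mT}$, which by module associativity is $\rho$ applied twice;
\item the second becomes, by the definition of $\mu$, interchange to move $\phi$ past $\mXmod$, and the defining equation of $\phi$ used twice, again $\rho$ applied twice.
\end{itemize}
The unit law is analogous. The composite of $\phi\eta_{\mT}$ with $\mXmod$ is $\rho(\fX\eta_{\mT}) = 1$ by the module unit law, and this agrees with the composite for $\eta$, so $\phi\eta_{\mT} = \eta$.

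The main obstacle is bookkeeping rather than ideas. The step needing care is the interchange that moves $\mXmod$ (or $\phi$) past a whiskered factor, so that the defining equation can be applied to the correct copy of $\wmX$. I would present each verification as a string-diagram chain, to keep the order of these slides explicit.
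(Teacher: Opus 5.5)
Your proposal is correct and follows the paper's proof essentially step for step. In both, $\mu$ and $\eta$ are defined by the universal property of $\wmX$ so that their defining equations are exactly the $\wmX$-module laws for $\mXmod$; the monad laws, and the monad map laws for $\rho^\flat$ via the $\mT$-module laws, are then obtained by postcomposing with $\mXmod$ and cancelling by uniqueness of factorizations, and your interchange steps are right.
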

\begin{proof}
  By the universal property of the codensity monad, there are unique
  2-cells as in \labelcref{item:ismonad} satisfying
  \[\preq
    \mathclap{
      %
    }
    \afterimage
    \vspace{-5pt}
  \]
  which are the additional required laws for
  \labelcref{item:ismodule}. Using these definitions we obtain the
  monad laws

  \[
    \mathclap{
      %
    \afterimage
    \vspace{5pt}
  \]

  \noindent
  cancelling $\mXmod$ by the uniqueness condition of the universal
  property.
  And using the module laws we obtain
  \labelcref{item:unimonad}
  \vspace{-25pt}

  \[
    \mathclap{
      \hspace{-2.5pt}
      %
    \afterimage
  \]

  \noindent
  again cancelling $\mXmod$.
\end{proof}

\begin{remark}
  More abstractly, for a fixed 1-cell $\fX$ one can define, as in~\cite[Proposition
  1.1]{mateo}, a monoidal category whose objects are 2-cells of the
  form
  \vspace{-7.5pt}
  \[\preq
    \mathclap{
      %
    }
    \vspace{-2.5pt}
  \]
  with a forgetful monoidal functor to the monoidal category of
  1-cells $\oS \to \oS$. A monoid in the former monoidal category is
  the same as the structure on $\fX$ of a $\mT$-module for some monad
  $\mT$, and a terminal object is the same as a codensity monad
  $\wmX$. The statements in \cref{prop:cismonad} then follow by noting
  that a terminal object in any monoidal category admits a unique
  monoid structure, which is moreover the terminal monoid.
\end{remark}

\begin{remark}
  Codensity monads generalize monads induced by adjunctions. If $\fL$
  is a 1-cell left adjoint to the 1-cell $\fR$, then $\fL \then \fR$
  is a codensity monad $\wcd{\fR}$ of $\fR$.
  \vspace{-5pt}
  \[\preq
    \mathclap{
      \begin{tikzpicture}
        \node [ob] (s) at (-7,0) {$\oX$};
        \node [ob] (m) at (0,7) {$\oS$};
        \node [ob] (t) at (7,0) {$\oS$};
        \draw [a] (s) -- node[arr,al] {$\fR$} (m);
        \draw [a] (m) -- node[arr,ar] {$\wcd{\fR}$} (t);
        \draw [a] (s) -- node[arr,below] {$\fR$} (t);
        \node [cell] at (0,2.5) {$\cdmod{\fR}$};
      \end{tikzpicture}
      \eqquad\coloneqq\eqquad
      \begin{tikzpicture}[xscale=.75]
        \node [ob] (s) at (-7,0) {$\oX$};
        \node [ob] (m) at (0,7) {$\oS$};
        \node [ob] (t) at (7,0) {$\oX$};
        \node [ob] (tt) at (17.5,0) {$\oS$};
        \draw [a] (s) -- node[arr,al] {$\fR$} (m);
        \draw [eq] (s) -- (t);
        \draw [a] (m) -- node[arr,ar] {$\fL$} (t);
        \draw [a] (t) -- node[arr,above] {$\fR$} (tt);
        \node [cell] at (0,2.75) {$\epsilon$};
      \end{tikzpicture}
    }
    \qqq
    \mathclap{
      \begin{tikzpicture}
        \begin{scope}
          \clip [boundc] (-6,-6) rectangle (6,6);
          \fill [bg] (-6,-6) rectangle (6,6);
          \renewcommand{\mypath}{(-2.6,6) .. controls +(0,-3.5) and +(-1,1) .. (0,0) -- (0,-6)}
          \fill [bgd] \mypath -- (-6,-6) -- (-6,6) -- cycle;
          \draw [o] \mypath;
          \renewcommand{\mypatha}{(2.6,6) .. controls +(0,-3.5) and +(1,1) .. (0,0)}
          \draw [ocdb] \mypatha;
          \draw [ocd] \mypatha -- (-.5,-.5);
        \end{scope}
        \draw [bound] (-6,-6) rectangle (6,6);
        \node [label,above] at (-2.6,6) {$\fR$};
        \node [label,acl] at (2.6,6) {$\wcd{\fR}$};
        \node [label,below] at (0,-6) {$\fR$};
        \node [label,bcl] at (2.6,6) {$\phantom{\wcd{\fR}}$};
      \end{tikzpicture}
      \eqquad\coloneqq\eqquad
      \begin{tikzpicture}
        \begin{scope}
          \clip [boundc] (-8,-6) rectangle (8,6);
          \fill [bg] (-8,-6) rectangle (8,6);
          \begin{scope}[shift={(1.125,0)}]
            \renewcommand{\mypatha}{(-5.25,6) -- (-5.25,1.875) .. controls +(0,-3.375) and +(0,-3.375) .. (-.75,1.875) -- (-.75,6)}
            \renewcommand{\mypathc}{(3,6) -- (3,-6)}
            \fill [bgd] (-10,-6) -- (-10,6) -- \mypatha -- \mypathc -- cycle;
            \draw [o] \mypatha;
            \draw [o] \mypathc;
          \end{scope}
        \end{scope}
        \draw [bound] (-8,-6) rectangle (8,6);
        \node [label,above] at (.375,6) {$\fL$};
        \node [label,below] at (-.75,-6) {$\phantom{\fL}$};
      \end{tikzpicture}
    }
    \vspace{-2.5pt}
  \]
\end{remark}

The following is a generalization of \cref{prop:formaltfae} for
codensity monads, where the algebra object $\Alg{\mT_1}{\oS_1}$ and
universal $\mT$-module $\mcar{\mT_1}$ are replaced by $\oX$ and the
1-cell $\fX$.

\begin{proposition}\label{prop:cformaltfae}
  Let $\fX \colon \oX \to \oS_1$ be a 1-cell with codensity monad
  $\wmX$, and let $\mT_2$ be a monad on $\oS_2$ with algebra object
  $\Alg{\mT_2}{\oS_2}$. Let
  \vspace{-12.5pt}
  \[\preq
    \mathclap{\fF \colon \oS_1 \to \oS_2}
    \qqq
    \mathclap{
      \begin{tikzpicture}[yscale=.5]
        \begin{scope}
          \clip [boundc] (-6,-6) rectangle (6,6);
          \fill [bg] (-6,-6) rectangle (6,6);
          \fill [bgm] (0,-6) rectangle (6,6);
          \draw [olax,purp] (0,7) -- (0,-9);
        \end{scope}
        \draw [bound] (-6,-6) rectangle (6,6);
        \node [label] at (-3,0) {$\oS_1$};
        \node [label] at (3,0) {$\oS_2$};
        \node [label,purp,above] at (0,6) {$\fF$};
        \node [label,below] at (0,-6) {$\phantom{\fF}$};
      \end{tikzpicture}
    }
    \vspace{-10pt}
  \]
  be any 1-cell that \emph{preserves the right extension $\wmX$}. That
  is, for any cells \vspace{-12.5pt}
  \[\preq
    \mathclap{\fY\colon \oS_1 \to \oS_2}
    \qqq
    \mathclap{
      \begin{tikzpicture}[yscale=.5]
        \begin{scope}
          \clip [boundc] (-6,-6) rectangle (6,6);
          \fill [bg] (-6,-6) rectangle (6,6);
          \fill [bgm] (0,-6) rectangle (6,6);
          \draw [o,beige] (0,6) -- (0,-6);
        \end{scope}
        \draw [bound] (-6,-6) rectangle (6,6);
        \node [label,beige,above] at (0,6) {$\fY$};
        \node [label,below] at (0,-6) {$\phantom{\fY}$};
      \end{tikzpicture}
    }
    \vspace{-20pt}
  \]
  \[\preq
    \mathclap{
      \begin{tikzpicture}[yscale=.5]
        \node [ob] (s) at (-8.5,7) {$\oX$};
        \node [ob] (u) at (0,7) {$\oS_1$};
        \node [ob] (d) at (0,-7) {$\oS_1$};
        \node [ob] (t) at (8.5,7) {$\oS_2$};
        \draw [a] (s) -- node[arr,above] {$\fX$} (u);
        \draw [a] (u) -- node[arr,acl] {$\fY$} (t);
        \draw [a] (s) -- node[arr,bl] {$\fX$} (d);
        \draw [a] (d) -- node[arr,br] {$\fF$} (t);
        \node [cell] at (0,1) {$\psi$};
      \end{tikzpicture}
    }
    \qqq
    \mathclap{
      \begin{tikzpicture}
        \begin{scope}
          \clip [boundc] (-6,-6) rectangle (6,6);
          \begin{scope}[shift={(-.75,0)}]
            \fill [bg] (-6,-6) rectangle (8,6);
            \renewcommand{\mypath}{(-2.6,6) .. controls +(0,-3.5) and +(-1,1) .. (0,0) -- (0,-6)}
            \renewcommand{\mypatha}{(2.6,6) .. controls +(0,-3.5) and +(1,1) .. (0,0)}
            \renewcommand{\mypathb}{(0,0) .. controls +(2.5,-1) and +(0,3) .. (4.5,-7) -- (4.5,-8)}
            \fill [bgd] \mypath -- (-6,-6) -- (-6,6) -- cycle;
            \fill [bgm] \mypatha -- \mypathb -- (8,-6) -- (8,6) -- cycle;
            \draw [o] \mypath;
            \draw [o,beige] \mypatha;
            \draw [olax,purp] \mypathb;
            \node [tcirc=choc] at (0,0) {};
          \end{scope}
        \end{scope}
        \draw [bound] (-6,-6) rectangle (6,6);
        \begin{scope}[shift={(-.75,0)}]
          \node [label] at (-2.9,-2) {$\oX$};
          \node [label,choc,right=2] at (.5,.75) {$\psi$};
          \node [label,above] at (-2.6,6) {$\fX$};
          \node [label,below] at (0,-6) {$\fX$};
        \end{scope}
      \end{tikzpicture}
    }
    \vspace{-7.5pt}
  \]
  
  \noindent
  there is a unique 2-cell
  \vspace{-5pt}
  \[\preq
    \mathclap{
      \begin{tikzpicture}
        \node [ob] (s) at (-7,0) {$\oS_1$};
        \node [ob] (d) at (0,-7) {$\oS_1$};
        \node [ob] (t) at (7,0) {$\oS_2$};
        \draw [a] (s) -- node[arr,above] {$\fY$} (t);
        \draw [a] (s) -- node[arr,bl] {$\mX$} (d);
        \draw [a] (d) -- node[arr,br] {$\fF$} (t);
        \node [cell] at (0,-2.5) {$\factor{\psi}$};
      \end{tikzpicture}
    }
    \qqq
    \mathclap{
      \begin{tikzpicture}
        \begin{scope}
          \clip [boundc] (-6,-6) rectangle (6,6);
          \fill [bg] (-6,-6) rectangle (6,6);
          \renewcommand{\mypath}{(-1.5,0) .. controls +(2.5,-1) and +(0,3) .. (3,-7) -- (3,-8)}
          \fill [bgm] (-1.5,6) -- \mypath -- (6,-6) -- (6,6) -- cycle;
          \draw [olax,purp] \mypath;
          \draw [o,beige] (-1.5,7) -- (-1.5,0);
          \draw [ocdb] (-1.5,0) -- (-1.5,-7);
          \draw [ocd] (-1.5,0) -- (-1.5,-7);
          \node [tcirc=choc] at (-1.5,0) {};
        \end{scope}
        \draw [bound] (-6,-6) rectangle (6,6);
        \node [label,choc,above=4.5,right=2] at (-1.5,0) {$\factor{\psi}$};
      \end{tikzpicture}
    }
    \vspace{-5pt}
  \]
  
  \noindent
  satisfying
  \[\preq
    \mathclap{
      \begin{tikzpicture}
        \node [ob] (s) at (-7,0) {$\oX$};
        \node [ob] (m) at (0,7) {$\oS_1$};
        \node [ob] (t) at (7,0) {$\oS_1$};
        \node [ob] (tt) at (14,7) {$\oS_2$};
        \draw [a] (s) -- node[arr,al] {$\fX$} (m);
        \draw [a] (m) -- node[arr,blcl,pos=.6] {$\mX$} (t);
        \draw [a] (s) -- node[arr,below] {$\fX$} (t);
        \draw [a] (m) -- node[arr,above] {$\fY$} (tt);
        \draw [a] (t) -- node[arr,br] {$\fF$} (tt);
        \node [cell] at (-.75,2.5) {$\mXmod$};
        \node [cell] at (7,4.5) {$\factor{\psi}$};
      \end{tikzpicture}
      \eqq
      \begin{tikzpicture}[yscale=.5]
        \node [ob] (s) at (-8.5,7) {$\oX$};
        \node [ob] (u) at (0,7) {$\oS_1$};
        \node [ob] (d) at (0,-7) {$\oS_1$};
        \node [ob] (t) at (8.5,7) {$\oS_2$};
        \draw [a] (s) -- node[arr,above] {$\fX$} (u);
        \draw [a] (u) -- node[arr,acl] {$\fY$} (t);
        \draw [a] (s) -- node[arr,bl] {$\fX$} (d);
        \draw [a] (d) -- node[arr,br] {$\fF$} (t);
        \node [cell] at (0,1) {$\psi$};
      \end{tikzpicture}
    }
    \qqq
    \mathclap{
      \begin{tikzpicture}[scale=.8571]
        \begin{scope}
          \clip [boundc] (-7,-7) rectangle (7,7);
          \begin{scope}[shift={(-1.5,0)}]
            \fill [bg] (-7,-7) rectangle (9,7);
            \renewcommand{\mypatha}{(-2.6,8) -- (-2.6,2.5) .. controls +(0,-3.5) and +(-1,1) .. (0,-2.5) -- (0,-7)}
            \renewcommand{\mypathb}{(2.6,2.5) .. controls +(2.5,-1) and +(0,3) .. (6,-4.5) -- (6,-8.5)}
            \fill [bgd] \mypatha -- (-7,-7) -- (-7,7) -- cycle;
            \fill [bgm] (2.6,8) -- \mypathb -- (9,-7) -- (9,7) -- cycle;
            \draw [o,beige] (2.6,8) -- (2.6,2.5);
            \draw [o] \mypatha;
            \draw [olax,purp] \mypathb;
            \draw [ocdb] (2.6,2.5) .. controls +(0,-3.5) and +(1,1) .. (0,-2.5);
            \draw [ocd] (2.6,2.5) .. controls +(0,-3.5) and +(1,1) .. (0,-2.5) --  (-.5,-3);
            \node [tcirc=choc] at (2.6,2.5) {};
          \end{scope}
        \end{scope}
        \draw [bound] (-7,-7) rectangle (7,7);
      \end{tikzpicture}
      \eqq
      \begin{tikzpicture}
        \begin{scope}
          \clip [boundc] (-6,-6) rectangle (6,6);
          \begin{scope}[shift={(-.75,0)}]
            \fill [bg] (-6,-6) rectangle (8,6);
            \renewcommand{\mypath}{(-2.6,6) .. controls +(0,-3.5) and +(-1,1) .. (0,0) -- (0,-6)}
            \renewcommand{\mypatha}{(2.6,6) .. controls +(0,-3.5) and +(1,1) .. (0,0)}
            \renewcommand{\mypathb}{(0,0) .. controls +(2.5,-1) and +(0,3) .. (4.5,-7) -- (4.5,-8)}
            \fill [bgd] \mypath -- (-6,-6) -- (-6,6) -- cycle;
            \fill [bgm] \mypatha -- \mypathb -- (8,-6) -- (8,6) -- cycle;
            \draw [o] \mypath;
            \draw [o,beige] \mypatha;
            \draw [olax,purp] \mypathb;
            \node [tcirc=choc] at (0,0) {};
          \end{scope}
        \end{scope}
        \draw [bound] (-6,-6) rectangle (6,6);
      \end{tikzpicture}
    }
    \afterimage
  \]
  
  \noindent
  (For instance, any right adjoint 1-cell $\fF$ satisfies this
  property, and if $\fX$ is a right adjoint then any 1-cell $\fF$
  satisfies this property.)
  
  \begin{enumerate}[label=(\roman*)]
  \item\label{item:cmonadone}
    Giving $\fF$ the structure of a monad lax 1-cell $\wmX \to \mT_2$
    \vspace{-5pt}
    \[\preq
      \hspace{-\leftmargin}
      \mathclap{
        \begin{tikzpicture}[yscale=.875,xscale=.875]
          \node [ob] (s) at (-7,0) {$\oS_1$};
          \node [ob] (t) at (7,0) {$\oS_2$};
          \node [ob] (u) at (0,7) {$\oS_2$};
          \node [ob] (d) at (0,-7) {$\oS_1$};
          \draw [a] (s) -- node[arr,al] {$\fF$} (u);
          \draw [a] (u) -- node[arr,ar] {$\mT_2$} (t);
          \draw [a] (s) -- node[arr,bl] {$\mX$} (d);
          \draw [a] (d) -- node[arr,br] {$\fF$} (t);
          \node [cell] at (0,0) {$\chi$};
        \end{tikzpicture}
      }
      \qqq
      \mathclap{
        \begin{tikzpicture}
          \begin{scope}
            \clip [boundc] (-6,-6) rectangle (6,6);
            \fill [bg] (-6,-6) rectangle (6,6);
            \renewcommand{\mypath}{(-3,6) .. controls +(0,-3) and +(-1,1) .. (0,0) .. controls +(1,-1) and +(0,3) .. (3,-6)};
            \fill [bgm] \mypath -- (6,-6) -- (6,6) -- cycle;
            \draw [omon,blue] (3,6) .. controls +(0,-3) and +(1,1) .. (0,0);
            \renewcommand{\mypatha}{(0,0) .. controls +(-1,-1) and +(0,3) .. (-3,-6)};
            \draw [ocdb] \mypatha;
            \draw [ocd] \mypatha;
            \draw [olax,purp] (-3,7) -- \mypath -- (3,-8);
          \end{scope}
          \draw [bound] (-6,-6) rectangle (6,6);
          \node [label,blue,above] at (3,6) {$\mT_2$};
          \node [label,black,below] at (-3,-6) {$\mX$};
        \end{tikzpicture}
      }
      \vspace{-5pt}
    \]
    is equivalent to giving a 1-cell
    \vspace{-5pt}
    \[\preq
      \hspace{-\leftmargin}
      \mathclap{\cdlift{\fF} \colon \oX \to \Alg{\mT_2}{\oS_2}}
      \qqq
      \mathclap{
        \begin{tikzpicture}[yscale=.5]
          \begin{scope}
            \clip [boundc] (-6,-6) rectangle (6,6);
            \fill [bg] (-6,-6) rectangle (6,6);
            \fill [bgm] (0,-6) rectangle (6,6);
            \fill [bgd] (-6,-6) rectangle (0,6);
            \fill [em=blue] (0,-6) rectangle (6,6);
            \draw [olax,purp] (0,7) -- (0,-9);
          \end{scope}
          \draw [bound] (-6,-6) rectangle (6,6);
          \node [label,purp,above] at (0,6) {$\cdlift{\fF}$};
          \node [label,below] at (0,-6) {$\phantom{\cdlift{\fF}}$};
        \end{tikzpicture}
      }
      \vspace{-5pt}
    \]
    satisfying
    \vspace{-5pt}
    
    \[\preq
      \hspace{-\leftmargin}
      \mathclap{
        \begin{tikzpicture}[yscale=.875]
          \node [ob] (s) at (-7,0) {$\oX$};
          \node [ob] (u) at (0,7) {$\oS_1$};
          \node [ob] (t) at (7,0) {$\oS_2$};
          \node [ob] (d) at (0,-7) {$\Alg{\mT_2}{\oS_2}$};
          \draw [a,long] (s) -- node[arr,al] {$\fX$} (u);
          \draw [a,long] (u) -- node[arr,ar] {$\fF$} (t);
          \draw [a,long] (s) -- node[arr,bl] {$\cdlift{\fF}$} (d);
          \draw [a,long] (d) -- node[arr,br] {$\mcar{\mT_2}$} (t);
          \node at (0,0) {$=$};
        \end{tikzpicture}
      }
      \qqq
      \mathclap{
        \begin{tikzpicture}[yscale=.5]
          \begin{scope}
            \clip [boundc] (-8,-6) rectangle (8,6);
            \fill [bg] (-8,-6) rectangle (8,6);
            \fill [bgm] (3,-6) rectangle (8,6);
            \fill [bgd] (-8,-6) rectangle (-3,6);
            \draw [o,black] (-3,6) -- (-3,-6);
            \draw [olax,purp] (3,7) -- (3,-9);
          \end{scope}
          \draw [bound] (-8,-6) rectangle (8,6);
          \node [label,black,above] at (-3,6) {$\fX$};
          \node [label,black,below] at (-3,-6) {$\phantom{\fX}$};
        \end{tikzpicture}
        \eqq
        \begin{tikzpicture}[yscale=.5]
          \begin{scope}
            \clip [boundc] (-8,-6) rectangle (8,6);
            \fill [bg] (-8,-6) rectangle (8,6);
            \fill [bgm] (-3,-6) rectangle (8,6);
            \fill [bgd] (-8,-6) rectangle (-3,6);
            \fill [em=blue] (-3,-6) rectangle (3,6);
            \draw [olax,purp] (-3,7) -- (-3,-9);
            \draw [oem,blue] (3,7) -- (3,-7);
          \end{scope}
          \draw [bound] (-8,-6) rectangle (8,6);
          \node [label,blue,above] at (3,6) {$\mcar{\mT_2}$};
          \node [label,blue,below] at (3,-6) {$\phantom{\mcar{\mT_2}}$};
        \end{tikzpicture}
      }
    \]
    \afterimage

    More specifically, the canonical map, sending a monad lax 1-cell
    $\wmX \to \mT_2$ carried by $\fF$ to the 1-cell
    $\cdlift{\fF} \colon \oX \to \Alg{\mT_2}{\oS_2}$ corresponding to
    the $\mT_2$-module $\fX \then \fF$, is bijective.
  \end{enumerate}

  \noindent
  Now let $\fF_1, \fF_2 \colon S_1 \to S_2$ be monad lax 1-cells
  $\wmX \to \mT_2$ with $\fF_2$ preserving the right extension $\wmX$
  as above. Let
  $\cdlift{\fF_1},\cdlift{\fF_2} \colon \oX \to \Alg{\mT_2}{\oS_2}$
  denote the 1-cells corresponding to the $\mT_2$-modules
  $\fX \then \fF_1$ and $\fX \then \fF_2$ respectively, as above.
  Let $\gamma \colon \fF_1 \Rightarrow \fF_2$
  be an arbitrary 2-cell.
  
  \begin{enumerate}[label=(\roman*)]
    \setcounter{enumi}{1}
  \item\label{item:cmonadtwo} We have that $\gamma$ is a monad 2-cell
    $\fF_1 \Rightarrow \fF_2$ if and only if there is a (unique) 2-cell
    ${\cdlift{\gamma} \colon \cdlift{\fF_1} \Rightarrow
      \cdlift{\fF_2}}$ \vspace{-15pt}
    \[\preq
      \hspace{-\leftmargin}
      \mathclap{
        \begin{tikzpicture}
          \node [ob] (s) at (-7,0) {$\oX$};
          \node [ob] (t) at (7,0) {$\Alg{\mT_2}{S_2}$};
          \draw [a] (s) .. controls +(5,3) and +(-5,3) .. node[arr,above] {$\cdlift{\fF_1}$} (t);
          \draw [a] (s) .. controls +(5,-3) and +(-5,-3) .. node[arr,below] {$\cdlift{\fF_2}$} (t);
          \node [cell] at (0,0) {$\cdlift{\gamma}$};
        \end{tikzpicture}
      }
      \qqq
      \mathclap{
        \begin{tikzpicture}
          \begin{scope}
            \clip [boundc] (-6,-6) rectangle (6,6);
            \fill [bg] (-6,-6) rectangle (6,6);
            \fill [bgm] (0,-6) rectangle (6,6);
            \fill [bgd] (-6,-6) rectangle (0,6);
            \fill [em=blue] (0,-6) rectangle (6,6);
            \draw [olax,purp] (0,7) -- (0,0);
            \draw [olax,dpurp] (0,0) -- (0,-7);
            \node [tspec] at (0,0) {};
          \end{scope}
          \draw [bound] (-6,-6) rectangle (6,6);
          \node [label,purp,above] at (0,6) {$\cdlift{\fF_1}$};
          \node [label,purp,below] at (0,-6) {$\cdlift{\fF_2}$};
          \node [label,left=3.5] at (0,0) {$\cdlift{\gamma}$};
        \end{tikzpicture}
      }
      \vspace{-15pt}
    \]
    satisfying
    \vspace{-5pt}
    
    \[\preq
      \hspace{-\leftmargin}
      \mathclap{
        \begin{tikzpicture}[xscale=.625,yscale=1.125]
          \node [ob] (s) at (-7,0) {$\oS_1$};
          \node [ob] (t) at (7,0) {$\oS_2$};
          \draw [a] (s) .. controls +(5,3) and +(-5,3) .. node[arr,above] {$\fF_1$} (t);
          \draw [a] (s) .. controls +(5,-3) and +(-5,-3) .. node[arr,below] {$\fF_2$} (t);
          \node [cell] at (0,0) {$\gamma$};
          \node [ob] (ss) at (-21,0) {$\oX$};
          \draw [a] (ss) -- node[arr,above] {$\fX$} (s);
        \end{tikzpicture}
        \!\eqq\!
        \begin{tikzpicture}[xscale=.625,yscale=1.125]
          \node [ob] (s) at (-7,0) {$\oX$};
          \node [ob] (t) at (7,0) {$\Alg{\mT_2}{\oS_2}$};
          \draw [a,long] (s) .. controls +(5,3) and +(-5,3) .. node[arr,above] {$\cdlift{\fF_1}$} (t);
          \draw [a,long] (s) .. controls +(5,-3) and +(-5,-3) .. node[arr,below] {$\cdlift{\fF_2}$} (t);
          \node [cell] at (0,0) {$\cdlift{\gamma}$};
          \node [ob] (tt) at (21,0) {$\oS_2$};
          \draw [a] (t) -- node[arr,above] {$\mcar{\mT_2}$} (tt);
        \end{tikzpicture}
      }
      \qqq
      \mathclap{
        \begin{tikzpicture}
          \begin{scope}
            \clip [boundc] (-8,-6) rectangle (8,6);
            \fill [bg] (-8,-6) rectangle (8,6);
            \fill [bgm] (3,-6) rectangle (8,6);
            \fill [bgd] (-8,-6) rectangle (-3,6);
            \draw [o,black] (-3,6) -- (-3,-6);
            \draw [olax,purp] (3,7) -- (3,0);
            \draw [olax,dpurp] (3,0) -- (3,-7);
            \node [tmhomi] at (3,0) {};
          \end{scope}
          \draw [bound] (-8,-6) rectangle (8,6);
          \node [label,left=3.5] at (3,0) {$\gamma$};
        \end{tikzpicture}
        \eqq
        \begin{tikzpicture}
          \begin{scope}
            \clip [boundc] (-8,-6) rectangle (8,6);
            \fill [bg] (-8,-6) rectangle (8,6);
            \fill [bgm] (-3,-6) rectangle (8,6);
            \fill [bgd] (-8,-6) rectangle (-3,6);
            \fill [em=blue] (-3,-6) rectangle (3,6);
            \draw [olax,purp] (-3,7) -- (-3,0);
            \draw [olax,dpurp] (-3,0) -- (-3,-7);
            \draw [oem,blue] (3,6) -- (3,-6);
            \node [tmhomi] at (-3,0) {};
          \end{scope}
          \draw [bound] (-8,-6) rectangle (8,6);
        \end{tikzpicture}
      }
      \afterimage
    \]
    
  \item\label{item:cmonadspec} A monad specialization $\sigma$ from
    $\fF_1$ to $\fF_2$
    \[\preq
      \hspace{-\leftmargin}
      \mathclap{
        \begin{tikzpicture}
          \node [ob] (s) at (-7,0) {$\oS_1$};
          \node [ob] (m) at (0,-7) {$\oS_1$};
          \node [ob] (t) at (7,0) {$\oS_2$};
          \draw [a] (s) -- node[arr,above] {$\fF_1$} (t);
          \draw [a] (s) -- node[arr,bl] {$\mX$} (m);
          \draw [a] (m) -- node[arr,br] {$\fF_2$} (t);
          \node [cell] at (0,-2.5) {$\sigma$};
        \end{tikzpicture}
      }
      \qqq
      \mathclap{
        \begin{tikzpicture}
          \begin{scope}
            \clip [boundc] (-6,-6) rectangle (6,6);
            \fill [bg] (-6,-6) rectangle (6,6);
            \begin{scope}
              \clip (1.5,6) -- (1.5,-6) -- (6,-6) -- (6,6) -- cycle;
              \fill [bgm] (-6,-6) rectangle (6,6);
              \node [tspech,blue] at (1.5,0) {};
            \end{scope}
            \renewcommand{\mypath}{(1.5,0) .. controls +(-2.5,-1) and +(0,3) .. (-3,-7)}
            \draw [ocdb] \mypath;
            \begin{scope}
              \clip (1.5,6) -- (1.5,-6) -- (-6,-6) -- (-6,6) -- cycle;
              \node [tspechsb] at (1.5,0) {};
              \node [tspechs,bgd] at (1.5,0) {};
            \end{scope}
            \draw [ocd] \mypath;
            \draw [olax,purp] (1.5,7) -- (1.5,0);
            \draw [olax,dpurp] (1.5,0) -- (1.5,-7);
            \node [tspec] at (1.5,0) {};
          \end{scope}
          \draw [bound] (-6,-6) rectangle (6,6);
          \node [label,above=4.5,left=5] at (1.5,0) {$\sigma$};
        \end{tikzpicture}
      }
    \]
    is equivalent to an arbitrary 2-cell
    $\cdlift{\sigma} \colon \cdlift{\fF_1} \Rightarrow \cdlift{\fF_2}$.
    \[\preq
      \hspace{-\leftmargin}
      \mathclap{
        \begin{tikzpicture}
          \node [ob] (s) at (-7,0) {$\oX$};
          \node [ob] (t) at (7,0) {$\Alg{\mT_2}{\oS_2}$};
          \draw [a] (s) .. controls +(5,3) and +(-5,3) .. node[arr,above] {$\cdlift{\fF_1}$} (t);
          \draw [a] (s) .. controls +(5,-3) and +(-5,-3) .. node[arr,below] {$\cdlift{\fF_2}$} (t);
          \node [cell] at (0,0) {$\cdlift{\sigma}$};
        \end{tikzpicture}
      }
      \qqq
      \mathclap{
        \begin{tikzpicture}
          \begin{scope}
            \clip [boundc] (-6,-6) rectangle (6,6);
            \fill [bg] (-6,-6) rectangle (6,6);
            \fill [bgm] (0,-6) rectangle (6,6);
            \fill [bgd] (-6,-6) rectangle (0,6);
            \fill [em=blue] (0,-6) rectangle (6,6);
            \draw [olax,purp] (0,7) -- (0,0);
            \draw [olax,dpurp] (0,0) -- (0,-7);
            \node [tspec] at (0,0) {};
          \end{scope}
          \draw [bound] (-6,-6) rectangle (6,6);
          \node [label,left=3.5] at (0,0) {$\cdlift{\sigma}$};
        \end{tikzpicture}
      }
      \afterimage
    \]
  \end{enumerate}
\end{proposition}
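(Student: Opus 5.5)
I will follow the proof of \cref{prop:formaltfae} closely. There, $\mcar{\mT_1}$ and its mate correspondence (via the adjunction $\mlcar{\mT_1} \dashv \mcar{\mT_1}$) are used to turn structure on $\fF$ into module structure on $\mcar{\mT_1} \then \fF$. Here I replace that mate correspondence by the universal property of $\wmX$ as a right extension, preserved by $\fF$. By the universal property of $\Alg{\mT_2}{\oS_2}$, giving $\cdlift{\fF}$ with $\cdlift{\fF} \then \mcar{\mT_2} = \fX \then \fF$ is the same as giving a $\mT_2$-module structure $\rho \colon \fX \then \fF \then \mT_2 \Rightarrow \fX \then \fF$. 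So for \labelcref{item:cmonadone} it suffices to show the following: 2-cells $\chi \colon \fF \then \mT_2 \Rightarrow \wmX \then \fF$ correspond bijectively to 2-cells $\rho \colon \fX\then\fF\then\mT_2 \Rightarrow \fX \then \fF$, and under this bijection the lax 1-cell laws correspond to the module laws.

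The bijection comes from the hypothesis that $\fF$ preserves the right extension, applied with $\fY = \fF \then \mT_2$ and $\psi = \rho$. It gives a unique $\chi = \factor{\rho}$ with $(\mXmod \then \fF)\cdot(\fX \then \chi) = \rho$. Conversely, any $\chi$ determines $\rho$ by exactly this formula. So the map $\chi \mapsto \rho$ is the canonical map in the statement, and it is bijective.

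For the laws, I compare both sides after whiskering by $\fX$ and composing with $\mXmod$. The multiplication law for $\chi$ is an equation of 2-cells $\fF \then \mT_2 \then \mT_2 \Rightarrow \wmX \then \fF$. Both sides have this form, so by the uniqueness in the preservation property (with $\fY = \fF\then\mT_2\then\mT_2$) they are equal iff they agree after precomposing with $\fX$ and postcomposing $\mXmod \then \fF$. I then compute those composites. On the side with two copies of $\chi$ and $\mu_{\wmX}$, I use the defining equations of the monad structure from \cref{prop:cismonad} (namely that $\mXmod$ is a module action, so the multiplication of $\wmX$ followed by $\mXmod$ equals $\mXmod$ applied twice). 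This reduces that side to $\rho$ applied twice. The other side reduces to $\rho$ after $\mu_{\mT_2}$, so the law becomes exactly module associativity. The unit law similarly becomes the module unit law, using the unit equation from \cref{prop:cismonad}.

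For \labelcref{item:cmonadspec}, a specialization $\sigma \colon \fF_1 \Rightarrow \wmX \then \fF_2$ corresponds by preservation for $\fF_2$ (with $\fY = \fF_1$) to a 2-cell $\phi \colon \fX \then \fF_1 \Rightarrow \fX \then \fF_2$. The same cancellation technique turns the specialization law into the $\mT_2$-module map law for $\phi$, and the universal property of $\Alg{\mT_2}{\oS_2}$ turns module maps into $\cdlift{\sigma}$.

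For \labelcref{item:cmonadtwo}, I use \cref{rem:tospec}: $\gamma$ is a monad 2-cell iff $\eta\then\gamma$ is a specialization. The module map corresponding to $\eta \then \gamma$ is $\fX \then \gamma$, because $\mXmod$ composed with the unit of $\wmX$ is the identity. Hence $\gamma$ is a monad 2-cell iff $\fX\then\gamma$ is a $\mT_2$-module map, i.e.\ iff it lifts to $\cdlift{\gamma}$, with uniqueness by faithfulness of $\mcar{\mT_2}$.

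I expect the main obstacle to be the multiplication-law step. One must check that the side involving two $\chi$'s and the codensity multiplication, once composed with $\mXmod$, really collapses to iterated $\rho$. This needs an interchange step that slides $\mXmod$ past the first $\chi$, and it is exactly where preservation is used twice: once for $\fY=\fF\then\mT_2\then\mT_2$ to justify cancellation, and once implicitly through the definition of $\chi$.
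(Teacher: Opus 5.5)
Your proposal is correct and follows the paper's own proof. The paper also obtains $\chi \coloneqq \factor{\rho}$ and $\sigma \coloneqq \factor{\phi}$ from the hypothesis that the relevant 1-cell preserves the right extension $\wmX$, translates the laws by cancelling $\mXmod$, identifies monad 2-cells with module maps of the form $\fX \then \gamma$, and concludes via the universal property of $\Alg{\mT_2}{\oS_2}$. Your explicit interchange computations, using the defining equations for the multiplication and unit of $\wmX$, simply spell out the string-diagram steps that the paper leaves pictorial.
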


The proof is the same as that of \cref{prop:formaltfae}, in the sense
that we are stripping the assumptions down to just what is necessary
to carry it through.  The content of that proof was to observe that
the definitions of (i) monad lax 1-cell structure on $\fF$, (ii) monad
2-cell structure on $\gamma$, and (iii) monad specialization are
\emph{mate} to the definitions of (i) module structure on
$\mcarT \then \fF$, (ii) module map structure on
$\mcarT \then \gamma$, and (iii) arbitrary module map.

The assumption that $\fF$ preserves the right extension $\wmX$
enables only the specific form of mate correspondence
needed.

\begin{proof}
  Giving $\mT_2$-module structure $\rho$ on $\fX \then \fF$ is
  equivalent to giving monad lax 1-cell structure
  $\chi \coloneqq \factor{\rho}$ on $\fF$
  \[\preq
    \mathclap{

    }
    \afterimage
    \vspace{5pt}
  \]
  since the module laws

  \[\preq
    \mathclap{
      %
    }
    \vspace{-10pt}
  \]
  \afterimage

  \noindent
  translate to the monad lax 1-cell laws under the correspondence
  induced by the universal property of the right extension.  Likewise,
  a $\mT_2$-module map
  $\phi \colon \fX \then \fF_1 \Rightarrow \fX \then \fF_2$. is
  equivalent to a monad specialization
  $\sigma \coloneqq \factor{\phi} \colon \fF_1 \Rightarrow \fF_2$
  \[\preq
    \mathclap{
      %
    }
    \vspace{-10pt}
  \]
  \afterimage

  \noindent
  and to give a monad 2-cell $\gamma$ is to give such a $\mT_2$-module
  map of the form $\fX \then \gamma$.
  \[\preq
    \mathclap{
      %
    }
  \]

  The result then follows from the universal property of the algebra
  object $\Alg{\mT_2}{\oS_2}$.
\end{proof}

\pagebreak
\chapter{Pushforward monads}

Codensity monads are a special case of the following more general
construction, introduced in~\cite{street:monads} and further studied
in~\cite{mateo}.  This section simply retraces the content of the
previous section in greater generality.

\begin{definition}
  Let
  \vspace{-5pt}
  \[\preq
    \mathclap{\mT \colon \oX \to \oX}
    \qqq
    \mathclap{
      %
    }
    \vspace{-5pt}
  \]
  be a monad on $\oX$. A \emph{pushforward monad}\footnote{The
    definition of pushforward monad makes sense in an implicit
    2-category.} (a.k.a.\ \emph{extension}~\cite{street:monads}) of
  $\mT$ along a 1-cell \vspace{-5pt}
  \[\preq
    \mathclap{\fX\colon \oX \to \oS}
    \qqq
    \mathclap{
      %
    }
    \vspace{-5pt}
  \]
  is a universal 1-cell $\mTX\colon \oS \to \oS$ equipped with a
  2-cell
  $\mTXmod \colon \fX \then \mTX \Rightarrow \mT \then
  \fX$.\footnote{To match our convention of diagrammatic order we are
    using the notation $\mTX$ reverse to $\fX_\#\mT$
    from~\cite{mateo}.}  \vspace{-5pt}
  \[\preq
    \mathclap{\mTX\colon \oS \to \oS}
    \qqq
    \mathclap{
      %
    }
    \vspace{-5pt}
  \]
  \afterimage
  
  \noindent
  That is, $\mTX$ is a \emph{right extension of $\mT \then \fX$
    along $\fX$}.  Explicitly, this means that for any cells
  \vspace{-5pt}
  \[\preq
    \mathclap{\fY\colon \oS \to \oS}
    \qqq
    \mathclap{
      %
    }
  \]
\end{definition}

\begin{proposition}\label{prop:pfismonad}
  Let $\mT$ be a monad on $\oX$ with pushforward monad $\mTX$ along
  $\fX \colon \oX \to \oS$.
  \begin{enumerate}[label=(\roman*)]
  \item\label{item:pfismonad} The pushforward monad $\mTX$ is a
    monad. That is, there exist canonical 2-cells
    \[\preq
      \hspace{-\leftmargin}
      \mathclap{
        %
      }
    \]
    satisfying the monad laws.
    
  \item\label{item:pfismodule} The 2-cell
    \[\preq
      \hspace{-\leftmargin}
      \mathclap{
        %
      }
    \]
    exhibits $\fX$ as a monad lax 1-cell $\mT \to \mTX$.
    
  \item\label{item:pfunimonad} The pushforward monad $\mTX$ is a
    \emph{universal monad for which $\fX$ is monad lax
      1-cell}. Explicitly, for any monad
    \vspace{-13.75pt}
    \[\preq
      \hspace{-\leftmargin}
      \mathclap{\mT' \colon \oS \to \oS}
      \qqq
      \mathclap{
        %
      }
      \vspace{-11.25pt}
    \]
    and monad lax 1-cell structure on $\fX$
    \[\preq
      \hspace{-\leftmargin}
      \mathclap{
        %
      }
    \]
    the 2-cell induced by the universal property of $\mTX$
    \vspace{-5pt}
    \[\preq
      \hspace{-\leftmargin}
      \mathclap{
        %
      }
      \vspace{-5pt}
    \]
    is a monad map $\mT' \to \mTX$.
  \end{enumerate}
\end{proposition}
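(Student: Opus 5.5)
I will mirror the proof of \cref{prop:cismonad}, with the universal 2-cell $\mTXmod \colon \fX \then \mTX \Rightarrow \mT \then \fX$ in place of $\mXmod$. The single tool is the universal property of $\mTX$ as a right extension of $\mT \then \fX$ along $\fX$. For any $\fY \colon \oS \to \oS$, each 2-cell $\psi \colon \fX \then \fY \Rightarrow \mT \then \fX$ factors uniquely as $\fY \Rightarrow \mTX$ followed by $\mTXmod$. In particular, two 2-cells into $\mTX$ agree as soon as their composites with $\mTXmod$ agree; I will call this ``cancelling $\mTXmod$''.

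\textbf{Step 1 (define the structure).} I define the multiplication $\mu \colon \mTX \then \mTX \Rightarrow \mTX$ as the unique factorization of the composite
\[
\fX \then \mTX \then \mTX \xRightarrow{\ \mTXmod \then \mTX\ } \mT \then \fX \then \mTX \xRightarrow{\ \mT \then \mTXmod\ } \mT \then \mT \then \fX \xRightarrow{\ \mu_{\mT} \then \fX\ } \mT \then \fX .
\]
I define the unit $\eta \colon \id_{\oS} \Rightarrow \mTX$ as the unique factorization of $\eta_{\mT} \then \fX \colon \fX \Rightarrow \mT \then \fX$. By construction, these defining equations are exactly the two monad lax 1-cell laws of \cref{def:monadone} for $\fX \colon \mT \to \mTX$ with structure 2-cell $\mTXmod$. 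Hence (ii) will hold automatically once (i) is established.

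\textbf{Step 2 (monad laws).} To prove associativity, I compose both $\mu(\mu \then \mTX)$ and $\mu(\mTX \then \mu)$ with $\mTXmod$. Applying the defining equation of $\mu$ twice on each side slides all three copies of $\mTX$ through $\fX$. Each side becomes $\mTXmod$ applied three times, followed by a ternary multiplication of $\mT$. The two bracketings of that ternary multiplication agree by associativity of $\mT$, so cancelling $\mTXmod$ gives the law. The unit laws follow the same way, using the unit laws of $\mT$. In string diagrams, $\fX$ is a lax membrane and $\mT$-nodes slide through it, so all three laws are topological deformations followed by one use of the corresponding law for $\mT$.

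\textbf{Step 3 (universality, (iii)).} Suppose $\mT'$ is a monad on $\oS$ and $\chi' \colon \fX \then \mT' \Rightarrow \mT \then \fX$ makes $\fX$ a monad lax 1-cell $\mT \to \mT'$. Let $\phi \colon \mT' \Rightarrow \mTX$ be the unique 2-cell with $(\fX \then \phi)\mTXmod = \chi'$. To check the multiplication law for $\phi$, I cancel $\mTXmod$: I compose both $\mu(\phi \then \phi)$ and $\phi\,\mu'$ with $\mTXmod$.
\begin{itemize}
\item For $\mu(\phi \then \phi)$: the defining equation of $\mu$ followed by the defining equation of $\phi$ (used twice) produces $\chi'$ twice, followed by $\mu_{\mT}$.
\item For $\phi\,\mu'$: the defining equation of $\phi$ produces $\mu'$ followed by $\chi'$.
\end{itemize}
These agree by the multiplication law of $\chi'$. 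The unit law for $\phi$ is handled identically, using the unit law of $\chi'$.

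The main obstacle is only bookkeeping, namely keeping the whiskerings and the order of the slides straight in the associativity check. There is no conceptual difficulty beyond that; it is the same argument as in \cref{prop:cismonad}, with $\mT \then \fX$ replacing $\fX$ as the target of the universal 2-cell. As in the remark following \cref{prop:cismonad}, one could alternatively observe that $\mTX$ is the terminal object of a monoidal category of such 2-cells $\psi$, and that a terminal object carries a unique monoid structure, which is moreover the terminal monoid.
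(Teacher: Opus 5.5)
Your proposal is correct and follows essentially the same route as the paper. You define the multiplication and unit on $\mTX$ as the unique factorizations that make the monad lax 1-cell laws for $(\fX,\mTXmod)$ hold by construction, which gives (ii), and then obtain both the monad laws and the monad map laws for the induced 2-cell $\mT' \Rightarrow \mTX$ by cancelling $\mTXmod$ via the uniqueness in the universal property (where you rightly use the monad lax 1-cell laws of $\chi'$).
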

\begin{proof}
  By the universal property of the pushforward monad, there are unique
  2-cells as in \labelcref{item:pfismonad} satisfying
  \vspace{-5pt}
  \[\preq
    \mathclap{
      %
    }
  \]

  \noindent
  which are the additional required laws for
  \labelcref{item:pfismodule}. Using these definitions we obtain the
  monad laws

  \[
    \mathclap{
      %
    \afterimage
  \]

  \noindent
  cancelling $\mTXmod$ by the uniqueness condition of the universal
  property.  And using the module laws we obtain
  \labelcref{item:pfunimonad} \vspace{-10pt}

  \[
    \mathclap{
      %
    \afterimage
  \]

  \noindent
  again cancelling $\mXmod$.
\end{proof}

\begin{remark}
  More abstractly, for a fixed $\mT$ and $\fX$ one can define, as
  in~\cite[Proposition 1.1]{mateo}, a monoidal category whose objects
  are 2-cells of the form
  \[\preq
    \mathclap{
      %
    }
  \]
  with a forgetful monoidal functor to the monoidal category of
  1-cells $\oS \to \oS$. A monoid in the former monoidal category is
  the same as the structure on $\fX$ of a monad lax 1-cell to some
  monad $\mT$, and a terminal object is the same as a pushforward
  monad $\mTX$. The statements in \cref{prop:pfismonad} then follow by
  noting that a terminal object in any monoidal category admits a
  unique monoid structure, which is moreover the terminal monoid.
\end{remark}
\begin{remark}
  Pushforward monads generalize monads transported along
  adjunctions. If $\mT$ is a monad on $\oX$ and
  $\fL \colon \oS \to \oX$ is a 1-cell left adjoint to the 1-cell
  $\fR \colon \oX \to \oS$, then $\fL \then \mT \then \fR$ is a
  pushforward monad $\pf{\mT}{\fR}$ of $\mT$ along $\fR$.
  \[\preq
    \mathclap{
      \begin{tikzpicture}[scale=.875]
        \node [ob] (s) at (-7,0) {$\oX$};
        \node [ob] (d) at (0,-7) {$\oX$};
        \node [ob] (u) at (0,7) {$\oS$};
        \node [ob] (t) at (7,0) {$\oS$};
        \draw [a] (s) -- node[arr,al] {$\fR$} (u);
        \draw [a] (u) -- node[arr,arcl] {$\mTX$} (t);
        \draw [a] (s) -- node[arr,bl] {$\mT$} (d);
        \draw [a] (d) -- node[arr,br] {$\fR$} (t);
        \node [cell] at (0,0) {$\mTXmod$};
      \end{tikzpicture}
      \eqquad\coloneqq\eqquad
      \begin{tikzpicture}[xscale=.75]
        \node [ob] (s) at (-7,0) {$\oX$};
        \node [ob] (m) at (0,7) {$\oS$};
        \node [ob] (t) at (7,0) {$\oX$};
        \node [ob] (tt) at (17.5,0) {$\oX$};
        \node [ob] (ttt) at (28,0) {$\oS$};
        \draw [a] (s) -- node[arr,al] {$\fR$} (m);
        \draw [eq] (s) -- (t);
        \draw [a] (m) -- node[arr,ar] {$\fL$} (t);
        \draw [a] (t) -- node[arr,above] {$\mT$} (tt);
        \draw [a] (tt) -- node[arr,above] {$\fR$} (ttt);
        \node [cell] at (0,2.75) {$\epsilon$};
      \end{tikzpicture}
    }
    \qqq
    \mathclap{
      \begin{tikzpicture}
        \begin{scope}
          \clip [boundc] (-6,-6) rectangle (6,6);
          \fill [bg] (-6,-6) rectangle (6,6);
          \renewcommand{\mypath}{(-2.6,6) .. controls +(0,-3.5) and +(-1,1) .. (0,0) .. controls +(1,-1) and +(0,3.5).. (2.6,-6)}
          \fill [bgd] \mypath -- (-6,-6) -- (-6,6) -- cycle;
          \draw [o] \mypath;
          \draw [omon,red] (0,0) .. controls +(-1,-1) and +(0,3.5).. (-2.6,-6);
          \renewcommand{\mypatha}{(2.6,6) .. controls +(0,-3.5) and +(1,1) .. (0,0)}
          \draw [opfb] \mypatha;
          \draw [omon,red] \mypatha -- (-.5,-.5);
        \end{scope}
        \draw [bound] (-6,-6) rectangle (6,6);
        \node [label,above] at (-2.6,6) {$\fR$};
        \node [label,below,red] at (-2.6,-6) {$\mT$};
        \node [label,above,red] at (2.6,6) {$\pf{\mT}{\fR}$};
        \node [label,below] at (2.6,-6) {$\fR$};
        \node [label,below,red] at (2.6,-6) {$\phantom{\pf{\mT}{\fR}}$};
      \end{tikzpicture}
      \eqquad\coloneqq\eqquad
      \begin{tikzpicture}
        \begin{scope}
          \clip [boundc] (-8,-6) rectangle (11.75,6);
          \fill [bg] (-8,-6) rectangle (11.75,6);
          \begin{scope}[shift={(1.125,0)}]
            \renewcommand{\mypatha}{(-5.25,6) -- (-5.25,1.875) .. controls +(0,-3.375) and +(0,-3.375) .. (-.75,1.875) -- (-.75,6)}
            \renewcommand{\mypathc}{(6.75,6) -- (6.75,-6)}
            \fill [bgd] (-10,-6) -- (-10,6) -- \mypatha -- \mypathc -- cycle;
            \draw [o] \mypatha;
            \draw [o] \mypathc;
            \draw [omon,red] (3,6) -- (3,-6);
          \end{scope}
        \end{scope}
        \draw [bound] (-8,-6) rectangle (11.75,6);
        \node [label,above] at (.375,6) {$\fL$};
        \node [label,below] at (.375,-6) {$\phantom{\fL}$};
      \end{tikzpicture}
    }
  \]
\end{remark}

\begin{remark}
  Pushforward monads generalize codensity monads. A codensity monad
  $\wmX$ of a 1-cell $\fX \colon \oX \to \oS$ is the same as a
  pushforward monad $\pf{\id_\oS}{\fX}$ of the identity monad
  $\id_\oS$ on $\oS$ along $\fX$.

  But also, it is conversely almost the case that codensity monads
  generalize pushforward monads. For any monad $\mT$ induced by a
  right adjoint 1-cell $\fR$, a pushforward monad $\mTX$ of $\mT$
  along a 1-cell $\fX$ is the same as a codensity monad
  $\wcd{\fR \then \fX}$ of $\fR \then \fX$. Indeed, as observed
  in~\cite{mateo}, by general properties of extensions, if $\fG$ is
  any functor that preserves the right extension $\pf{\mT}{\fF}$, then
  a pushforward monad $\pf{(\pf{\mT}{\fF})}{\fG}$ of $\pf{\mT}{\fF}$
  along $\fG$ is the same as a pushforward monad
  $\pf{\mT}{(\fF \then \fG)}$ of $\mT$ along $\fF \then
  \fG$. Therefore, since all functors $\fX$ preserve right extensions
  along right adjoint 1-cells $\fR$, we get
  \[\pf{\mT}{\fX} = \pf{(\pf{\id_\oS}{\fR})}{\fX} = \pf{\id_\oS}{(\fR \then \fX)} = \wcd{\fR \then \fX}.\]
  
  In particular, whenever $\mT$ admits an algebra object, $\mTX$ can
  be described as a codensity monad. \cref{prop:cformaltfae} then
  gives a description of monad lax 1-cells from $\mTX$ to monads with
  algebra objects, as well as monad 2-cells and monad specializations
  between them, in terms of cells between algebra objects.
\end{remark}

We end with our most general form of the correspondence used in the
proof of \cref{prop:formaltfae} and \cref{prop:cformaltfae}, which
still applies in the case that algebra objects do not exist.

\pagebreak

\begin{proposition}\label{prop:pfformaltfae}
  Let $\mT$ be a monad on $\oX$ with pushforward monad $\mTX$ along
  $\fX \colon \oX \to \oS_1$, and let $\mT_2$ be a monad on
  $\oS_2$. Let $\fF \colon \oS_1 \to \oS_2$ be any 1-cell that
  preserves the right extension $\mTX$.
  \begin{enumerate}[label=(\roman*)]
  \item\label{item:pfmonadone} Giving $\fF$ the structure of a monad
    lax 1-cell $\mTX \to \mT_2$ is equivalent to giving
    $\fX \then \fF$ the structure of a monad lax 1-cell
    $\mT \to \mT_2$.
  \end{enumerate}

  \noindent
  Now let $\fF_1, \fF_2 \colon \oS_1 \to \oS_2$ be monad lax 1-cells
  $\mTX \to \mT_2$ with $\fF_2$ preserving the right extension $\mTX$
  as above. Let $\gamma \colon \fF_1 \Rightarrow \fF_2$ be an
  arbitrary 2-cell.
  
  \begin{enumerate}[label=(\roman*)]
    \setcounter{enumi}{1}
  \item\label{item:pfmonadtwo} We have that $\gamma$ satisfies the law
    of a monad 2-cell $\fF_1 \Rightarrow \fF_2$ if and only if
    $\fX \then \gamma$ satisfies the law of a monad 2-cell
    $\fX \then \fF_1 \Rightarrow \fX \then \fF_2$.
  \item\label{item:pfmonadspec} Giving a monad specialization
    $\fF_1 \Rightarrow \fF_2$ is equivalent to giving a monad
    specialization $\fX \then \fF_1 \Rightarrow \fX \then \fF_2$.
  \end{enumerate}
\end{proposition}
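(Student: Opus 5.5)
We follow the template of the proof of \cref{prop:cformaltfae}, with the right extension $\mTX$ of $\mT \then \fX$ along $\fX$ replacing the codensity monad. The single tool is the assumption that $\fF$ (resp.\ $\fF_2$) preserves this right extension. Concretely, for any 1-cell $\fY \colon \oS_1 \to \oS_2$, 2-cells $\psi \colon \fX \then \fY \Rightarrow \mT \then \fX \then \fF$ correspond bijectively to 2-cells $\factor{\psi} \colon \fY \Rightarrow \mTX \then \fF$. The correspondence is given by precomposing $\fX$ and then pasting $\mTXmod \then \fF$. Every clause of the proposition will be obtained by transporting laws across this bijection.

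For \labelcref{item:pfmonadone}, take $\fY = \mT_2$ composed appropriately, i.e.\ $\fY = \fF \then \mT_2$. Structure 2-cells $\chi \colon \fF \then \mT_2 \Rightarrow \mTX \then \fF$ then correspond to structure 2-cells $\psi \colon \fX \then \fF \then \mT_2 \Rightarrow \mT \then \fX \then \fF$, via $\psi = (\fX \then \chi)$ pasted with $\mTXmod \then \fF$. The latter is exactly the composite monad lax 1-cell structure from the proof of \cref{prop:doublemonads}, composing $\fX$ (lax via $\mTXmod$, \cref{prop:pfismonad}~\labelcref{item:pfismodule}) with $\fF$.

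Next we compare the laws. For the unit law, both sides of the $\chi$-law become, after pasting with $\mTXmod$, the two sides of the $\psi$-law. Here we use the defining equation for the unit of $\mTX$, which says $\mTXmod$ is unit-compatible. For the multiplication law we again paste with $\mTXmod$. The defining equation for the multiplication of $\mTX$ (from the proof of \cref{prop:pfismonad}) converts $\mu_{\mTX}$ followed by $\mTXmod$ into two copies of $\mTXmod$ followed by $\mu_{\mT}$. This turns the $\chi$-multiplication law into the $\psi$-multiplication law. Conversely, each $\psi$-law is a pair of parallel cells $\fX \then \fY' \Rightarrow \mT \then \fX \then \fF$, with $\fY' = \fF \then \mT_2 \then \mT_2$ or $\fY' = \fF$. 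Uniqueness in the preservation property then lets us cancel $\mTXmod$ and recover the $\chi$-law.

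The main obstacle is this last step for the multiplication law. On the $\chi$ side the two sides land in $\mTX \then \fF$, but the pasting naturally produces expressions involving $\mTX \then \mTX \then \fF$. We must first rewrite the composite $\chi$'s through $\mu_{\mTX}$ so that both sides have the form ``something $\Rightarrow \mTX \then \fF$ pasted with $\mTXmod \then \fF$''. Only then does uniqueness apply. We would handle this with the string-diagram manipulation analogous to the one displayed in the proof of \cref{prop:cformaltfae}.

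For \labelcref{item:pfmonadtwo}, the monad 2-cell law for $\gamma$ equates two cells $\fF_1 \then \mT_2 \Rightarrow \mTX \then \fF_2$. By preservation for $\fF_2$, they are equal iff their pastings with $\fX$ and $\mTXmod \then \fF_2$ agree. Naturality of whiskering turns that pasted equation into the monad 2-cell law for $\fX \then \gamma$ with respect to the composite structures. For \labelcref{item:pfmonadspec}, we take $\fY = \fF_1$. Specializations $\sigma \colon \fF_1 \Rightarrow \mTX \then \fF_2$ then correspond to cells $\fX \then \fF_1 \Rightarrow \mT \then \fX \then \fF_2$, and we show the specialization law transports. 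After pasting $\mTXmod$, the $\mu_{\mTX}$ appearing in the law is again rewritten via the multiplication equation for $\mTX$, and we then cancel by uniqueness exactly as in \labelcref{item:pfmonadone}.
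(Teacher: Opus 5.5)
Your proposal is correct and follows the same route as the paper. You use the assumption that $\fF$ (resp.\ $\fF_2$) preserves the right extension $\mTX$ to put 2-cells into $\mT \then \fX \then \fF$ in bijection with 2-cells into $\mTX \then \fF$, then check that the monad lax 1-cell, monad 2-cell, and specialization laws correspond under this bijection, using the defining equations for the unit and multiplication of $\mTX$ from \cref{prop:pfismonad}. The step you flag as the main obstacle is just one application of that multiplication equation together with interchange, so nothing further is needed.
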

\begin{proof}
  For \labelcref{item:pfmonadone}, by our assumption that $\fF$
  preserves the right extension $\mTX$, any 2-cell
  $\chi \colon \fX \then \fF \then \mT_2 \Rightarrow \mT \then \fX
  \then \fF$
  factors as
  \vspace{-7.5pt}
  \[\preq
    \mathclap{

    }
    \vspace{-5pt}
  \]

  \noindent
  and the monad lax 1-cell laws for $\fX \then \fF$
  \vspace{-7.5pt}
  \[\preq
    \mathclap{
      %
    }
    \vspace{-2.5pt}
  \]

  \noindent
  translate to the monad lax 1-cell laws for $\fF$ under the
  correspondence induced by universal property of the right extension.

  Likewise for \labelcref{item:pfmonadtwo}, given a 2-cell
  $\gamma \colon \fF_1 \Rightarrow \fF_2$, the monad 2-cell laws for
  $\fX \then \gamma$
  \vspace{-5pt}
  \[\preq
    \mathclap{
      %
    }
    \vspace{-5pt}
  \]

  \noindent
  correspond to the monad 2-cell laws for $\gamma$.

  Finally for \labelcref{item:pfmonadspec}, any 2-cell
  $\sigma \colon \fX \then \fF_1 \Rightarrow \mT \then \fX \then
  \fF_2$ factors as
  \[\preq
    \mathclap{
      %
    }
  \]

  \noindent
  and the monad specialization laws for $\sigma$
  \[\preq
    \mathclap{
      %
    }
  \]

  \noindent
  correspond to the monad specialization laws for $\factor{\sigma}$.
\end{proof}


\bibliographystyle{alpha}
\bibliography{references}

\end{document}